\newtheorem{thm}{Theorem}[section]
\newtheorem{conj}[thm]{Conjecture}
\newtheorem{defn}[thm]{Definition}
\newtheorem{lem}[thm]{Lemma}
\newtheorem{prop}[thm]{Proposition}
\newtheorem{cor}[thm]{Corollary}
\newtheorem*{rmrk}{Remark}
\crefname{defn}{Definition}{Definitions}
\crefname{lem}{Lemma}{Lemmata}
\crefname{thm}{Theorem}{Theorems}
\crefname{prop}{Proposition}{Propositions}
\crefname{cor}{Corollary}{Corollaries}
\crefname{fig}{Figure}{Figures}
\crefname{conj}{Conjecture}{Conjectures}
\renewcommand{\S}{\mathfrak{S}}
\newcommand{\vertdots}{\substack{\boldsymbol{\cdot} \\ \boldsymbol{\cdot} \\ \boldsymbol{\cdot}}}
\begin{document}
\title{Properties of the Edelman--Greene bijection}
\author{Svante Linusson and Samu Potka}
\address{Department of Mathematics, KTH Royal Institute of Technology, Stockholm, Sweden.}
\email{\href{mailto:linusson@kth.se}{linusson@kth.se}, \href{mailto:potka@kth.se}{potka@kth.se}}
\thanks{The authors were supported by the Swedish Research Council, grant 621-2014-4780. An extended abstract of this paper appeared in the conference proceedings of FPSAC'18~\cite{LP18}.}

\begin{abstract}
Edelman and Greene constructed a correspondence between reduced words of the reverse permutation and standard Young tableaux. We prove that for any reduced word the shape of the region of the insertion tableau containing the smallest possible entries evolves exactly as the upper-left component of the permutation's (Rothe) diagram. Properties of the Edelman--Greene bijection restricted to 132-avoiding and 2143-avoiding permutations are presented. We also consider the Edelman--Greene bijection applied to non-reduced words.
\end{abstract}

\maketitle

\section{Introduction}
In 1982, Richard Stanley conjectured, and later proved algebraically in~\cite{St84} that the number of different reduced words for the reverse permutation in the symmetric group $\S_n$ is equal to the number of staircase shape standard Young tableaux. Motivated to find a bijective proof, Edelman and Greene~\cite{EG87} constructed a correspondence based on the celebrated Robinson--Schensted--Knuth (RSK) algorithm and Sch\"utzenberger's jeu de taquin. See also the work of Haiman on dual equivalence~\cite{Ha92}. Later, Little \cite{L03} found another bijection based on the Lascoux--Sch\"utzenberger tree,~\cite{LS85}, which was proved to be equivalent to the Edelman--Greene correspondence by Hamaker and Young in~\cite{HY14}. 

Recently, reduced words of the reverse permutation have been studied under the name of \emph{sorting networks}. Uniformly random sorting networks are the topic of, for example,~\cite{AHRV07} by Angel, Holroyd, Romik, and Vir\'ag, and the subsequent papers, in particular the recent work by Dauvergne and Vir\'ag~\cite{DV18} and Dauvergne~\cite{D18} announcing proofs of the conjectures in~\cite{AHRV07}. See an example of a sorting network illustrated by its \emph{wiring diagram} in \cref{wiring}.

\begin{figure}\label[fig]{wiring}
\includegraphics[width=0.95\textwidth]{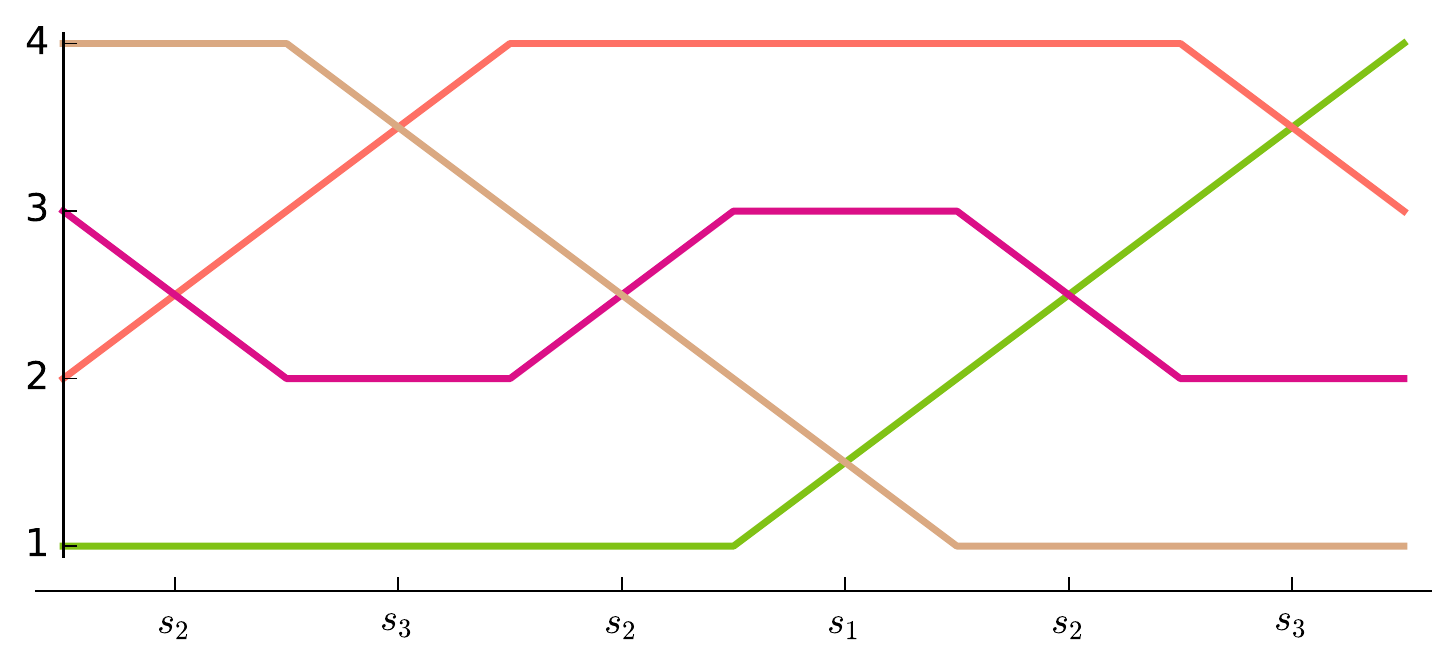}
\caption{The wiring diagram of 232123.}
\end{figure}

Our main result, \cref{frozendiagram}, is that the shape of the empty area (Rothe diagram) in the upper left corner of the permutation matrix is exactly the same as a region in the tableaux generated by the Edelman--Greene correspondence which we call the \emph{frozen region}. See \cref{diagram}. One consequence of this is a reformulation of a part of~\cite[Conjecture 2]{AHRV07} directly in terms of the Edelman--Greene bijection. See \cref{altconj} in Section 3.

As a byproduct of \cref{frozendiagram} we obtain some new observations and simple reproofs of previous results on the reduced words of 132-avoiding permutations in \cref{132av}, \cref{132f} and \cref{132sort}. We also consider sorting networks whose intermediate permutations are required to be 132-avoiding. These can be viewed as chains of maximum length in the Tamari lattice~\cite{BW97}, and have recently been studied by Fishel and Nelson~\cite{FN14}, and Schilling, Thi\'ery, White and Williams~\cite{STWW17}. The results in this paper are used to study limit phenomena of random 132-avoiding sorting networks in~\cite{LPS18}.

In \cref{S:nonreduced} we consider the Edelman--Greene bijection applied to non-reduced words. In particular, we study the sets of words yielding the same pairs of Young tableaux under the Edelman--Greene correspondence and study a natural partial order on this set which turns out to have some nice and surprising properties. Note that there is a different generalization of the Edelman--Greene bijection for non-reduced words called Hecke insertion~\cite{BKSTY08}.

\section{Preliminaries}
This section briefly reviews the basic definitions and background of this paper.
\subsection{Reduced words and the weak Bruhat order on \texorpdfstring{$\S_n$}{Sn}}
The \emph{symmetric group} $\S_n$ contains all permutations $\sigma = \sigma(1)\dots\sigma(n)$ on $[n] = \{1, \dots, n\}$.
The set of \emph{inversions} of a permutation $\sigma \in \S_n$ is defined as $\mathrm{Inv}(\sigma) = \{(i, j) : 1 \leq i < j \leq n, \sigma(i) > \sigma(j)\}$. 
The \emph{weak Bruhat order} is then defined by $\sigma \preceq_w \tau$ for $\sigma, \tau \in \S_n$ if $\mathrm{Inv}(\sigma) \subseteq \mathrm{Inv}(\tau)$. The \emph{reverse permutation} $n (n-1) \dots 2 1$ is the unique maximal element of $(\S_n, \preceq_w)$ and the identity permutation $\mathrm{id} = 1 \dots n$ the unique minimal element. 

A word $w = w_1 \dots w_m$ with letters $w_i \in [n - 1]$ defines a permutation $\sigma \in \S_n$ by $\sigma = s_{w_1} \dots s_{w_m}$, where $s_i$ is the adjacent transposition $(i \ i+1)$. The notation $w \in \mathbb{N}^*$ means that $w$ is a finite word with positive integer letters. We define len$(w) = m$, the length of $w$. When len$(w) = \mathrm{inv}(\sigma)$, we say that $w$ is a \emph{reduced word} of $\sigma$. Note that each reduced word $w = w_1 w_2 \dots w_m$, $m = \mathrm{inv}(\sigma)$, of $\sigma \in \S_n$ can be identified with a chain $\mathrm{id} \preceq_w  s_{w_1} \preceq_w s_{w_1}s_{w_2} \preceq_w \dots \preceq_w s_{w_1}s_{w_2} \cdots s_{w_m} = \sigma$ in the weak Bruhat order on $\S_n$. We denote the set of reduced words of $\sigma \in \S_n$ by $\mathcal{R}(\sigma)$, and, for convenience, in the case of $\sigma = n (n - 1) \dots 2 1$ use the abbreviation $\mathcal{R}(n)$.

We will adopt the convention that the permutation matrix corresponding to $\sigma\in \S_n$ has 1s in entries $(\sigma(i),i), i=1\dots n$, see the example below. This is the transpose of the perhaps more typical convention. It is important to note that we consider the transpositions acting on positions and perform the compositions of $s_{w_i}$ corresponding to a word $w = w_1 \dots w_m$ from the left in our arguments. (Equivalently one could compose from the right and consider them acting on values.) As an example, consider $\S_4$ and the reduced word $1213$. Composing $s_1 s_2 s_1 s_3$ from the left yields the permutation $3241$. In terms of permutation matrices, we would have
\[\begin{blockarray}{cccc}
\mathbf{2} & \mathbf{1} & \mathbf{3} & \mathbf{4} \\
\begin{block}{(cccc)}
0 & 1 & 0 & 0 \\
1 & 0 & 0 & 0 \\
0 & 0 & 1 & 0 \\
0 & 0 & 0 & 1 \\
\end{block}
\BAmulticolumn{4}{c}{s_1} \\
\end{blockarray}\quad \quad  \begin{blockarray}{cccc}
\mathbf{2} & \mathbf{3} & \mathbf{1} & \mathbf{4} \\
\begin{block}{(cccc)}
0 & 0 & 1 & 0 \\
1 & 0 & 0 & 0 \\
0 & 1 & 0 & 0 \\
0 & 0 & 0 & 1 \\
\end{block}
\BAmulticolumn{4}{c}{s_1 s_2} \\
\end{blockarray}\quad \quad  \begin{blockarray}{cccc}
\mathbf{3} & \mathbf{2} & \mathbf{1} & \mathbf{4} \\ 
\begin{block}{(cccc)}
0 & 0 & 1 & 0 \\
0 & 1 & 0 & 0 \\
1 & 0 & 0 & 0 \\
0 & 0 & 0 & 1 \\
\end{block}
\BAmulticolumn{4}{c}{s_1 s_2 s_1} \\
\end{blockarray} \quad \quad  \begin{blockarray}{cccc}
\mathbf{3} & \mathbf{2} & \mathbf{4} & \mathbf{1} \\ 
\begin{block}{(cccc)}
0 & 0 & 0 & 1 \\
0 & 1 & 0 & 0 \\
1 & 0 & 0 & 0 \\
0 & 0 & 1 & 0 \\
\end{block}
\BAmulticolumn{4}{c}{s_1 s_2 s_1 s_3} \\
\end{blockarray}\vspace{-\baselineskip}\] where we can see that $s_i$ corresponds to swapping the columns $i$ and $i+1$.

\subsection{Young tableaux}
Recall that a \emph{partition} $\lambda$ of $m \in \mathbb{N}$ is a tuple $(\lambda_1, \dots, \lambda_{\ell})$ of positive integers $\lambda_1 \geq \dots \geq \lambda_{\ell} > 0$ such that $\lambda_1 + \dots + \lambda_{\ell} = m$. The \emph{length} of $\lambda$ is the number of parts in it: $\mathrm{len}(\lambda) = \ell$. A partition can be represented by its \emph{Young diagram} (also called \emph{Ferrers diagram}) which is the set $\{(i, j) \in \mathbb{N}^2 : 1 \leq i \leq {\ell}, 1 \leq j \leq \lambda_i\}$ and is often (in the so-called English notation) drawn as a collection of square boxes corresponding to the \emph{cells} $(i, j)$ with $i$ increasing downwards and $j$ to the right.

A \emph{Young tableau} $T$ of shape $\lambda = (\lambda_1, \dots, \lambda_{\ell})$ is a filling of the Young diagram of $\lambda$, typically with positive integer \emph{entries}, denoted $T_{i, j}$. Such a tableau $T$ is called \emph{standard} if the entries $1, \dots, \lambda_1 + \dots + \lambda_{\ell}$ appear exactly once each, and the rows and columns of $T$ are strictly increasing. We let $\mathrm{SYT}(\lambda)$ be the set of standard Young tableaux of the shape $\lambda$.

The \emph{reading word} $r(T)$ of a Young tableau $T$ is the word obtained by collecting the entries of $T$ row by row from left to right starting from the bottom row.

\subsection{The Edelman--Greene bijection}
The Edelman--Greene correspondence is a bijection between $\mathcal{R}(n)$, that is, maximal chains in the weak Bruhat order on $\S_n$, and standard Young tableaux of the \emph{staircase shape} $\mathrm{sc}_n = (n-1, n-2, \dots, 1)$.
\begin{defn}[The Edelman--Greene insertion]
	Suppose that $P$ is a Young tableau with strictly increasing rows $P_1, \dots, P_{\ell}$ and $x_0 \in \mathbb{N}$ is to be inserted in $P$. The insertion procedure is as follows for each $0 \leq i \leq \ell$:\begin{itemize}
		\item If $x_i > z$ for all $z \in P_{i+1}$, place $x_i$ at the end of $P_{i+1}$ and stop.
		\item If $x_i = z'$ for some $z' \in P_{i+1}$, insert $x_{i+1} = z' + 1$ in $P_{i+2}$.
		\item Otherwise, $x_i < z$ for some $z \in P_{i+1}$, and we let $z'$ be the least such $z$, replace it by $x_i$ and insert $x_{i+1} = z'$ in $P_{i+2}$. In both this and the case above we say that $x_i$ \textbf{bumps} $z'$.
	\end{itemize} Repeat the insertion until for some $i$ the $x_i$ is inserted at the end of $P_{i+1}$ and the algorithm stops. This could be a previously empty row $P_{\ell + 1}$.
\end{defn}
We should mention that our definition of the insertion differs from that of \cite{EG87}, where it is called the \emph{Coxeter--Knuth insertion}. However, using for example the proof of \cite[Lemma 6.23]{EG87}, one can show that the two definitions coincide for reduced words. In our formulation the tableaux are increasing in rows and columns also for non-reduced words.
Note also that except for a difference in handling equal elements bumping, the Edelman--Greene insertion and the RSK insertion are the same.
\begin{defn}[The Edelman--Greene correspondence]\label[defn]{egbij}
	Suppose that $w = w_1 \dots w_i \dots w_m \in \mathbb{N}^*$. Initialize $P^{(0)} = \emptyset$.
	\begin{itemize}
		\item For each $1 \leq i \leq m$, insert $w_i$ in $P^{(i-1)}$ and denote the result by $P^{(i)}$.
	\end{itemize}
	Let $P^{(m)} = P(w)$ and let $Q(w)$ be the Young tableau obtained by setting $Q(w)_{i, j} = k$ for the unique cell $(i, j) \in P^{(k)}\setminus P^{(k-1)}$. Set $\mathrm{EG}(w) = Q(w)$.
\end{defn}

As an example, consider the reduced word $w = 321232$. Then the $P^{(k)}, 1 \leq k \leq 6$, form the following sequence
\[ \ytableausetup{centertableaux}
\begin{ytableau}
\scriptstyle 3 \\
\none \\
\none
\end{ytableau}\ \longrightarrow\ \begin{ytableau}
\scriptstyle 2 \\
\scriptstyle 3 \\
\none
\end{ytableau}\ \longrightarrow\ \begin{ytableau}
\scriptstyle 1 \\
\scriptstyle 2\\
\scriptstyle 3
\end{ytableau}\ \longrightarrow\ \begin{ytableau}
\scriptstyle 1 & \scriptstyle 2\\
\scriptstyle 2\\
\scriptstyle 3
\end{ytableau}\ \longrightarrow\ \begin{ytableau}
\scriptstyle 1 & \scriptstyle 2 & \scriptstyle 3\\
\scriptstyle 2\\
\scriptstyle 3
\end{ytableau}\ \longrightarrow\ \begin{ytableau}
\scriptstyle 1 & \scriptstyle 2 & \scriptstyle 3\\
\scriptstyle 2 & \scriptstyle 3 \\
\scriptstyle 3
\end{ytableau}\] so that \[ P(321232) = \begin{ytableau} \scriptstyle 1 & \scriptstyle 2 & \scriptstyle 3\\
\scriptstyle 2 & \scriptstyle 3 \\
\scriptstyle 3 \end{ytableau}\ \mathrm{and}\ \mathrm{EG}(321232) = Q(321232) = \begin{ytableau} \scriptstyle 1 & \scriptstyle 4 & \scriptstyle 5\\
\scriptstyle 2 & \scriptstyle 6 \\
\scriptstyle 3 \end{ytableau}.\]

The tableau $P(w)$ is called the \emph{insertion tableau} and the tableau $Q(w)$ the \emph{recording tableau}. Note that $P(w)$ and $Q(w)$ are always of the same shape for a fixed $w$. We now state one of the main results of Edelman and Greene.

\begin{thm}[{\cite[Theorem 6.25]{EG87}}]\label[thm]{egmain}
The correspondence \[w \mapsto (P(w), Q(w))\] is a bijection between $\cup_{\sigma \in \S_n} \mathcal{R}(\sigma)$ and the set of pairs of tableaux $(P, Q)$ such that $P$ is row and column strict, $r(P)$ is reduced, $P$ and $Q$ have the same shape, and $Q$ is standard.
\end{thm}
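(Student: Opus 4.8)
The plan is to establish the three standard ingredients of such a statement: that $w\mapsto(P(w),Q(w))$ is well defined (its image lands in the asserted set of pairs), that it is injective via an explicit reverse insertion, and that it is surjective. The organizing principle throughout is that Edelman--Greene insertion is an RSK-type algorithm compatible with \emph{Coxeter--Knuth equivalence} on words, i.e.\ the equivalence generated by the moves $a\,(a{+}1)\,a\leftrightarrow(a{+}1)\,a\,(a{+}1)$ and $a\,c\,b\leftrightarrow c\,a\,b$, $b\,a\,c\leftrightarrow b\,c\,a$ for $a<b<c$; each such move fixes the underlying permutation (being a braid relation, or a commutation of two non-adjacent generators) and preserves the property of being reduced, since it preserves length. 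The target of the ``insertion side'' of the argument is then the single-step lemma: if $P$ is row- and column-strict and $P'$ is obtained by inserting $x$, then $P'$ is again row- and column-strict and $r(P')$ is Coxeter--Knuth equivalent to $r(P)\cdot x$. Iterating this over $w=w_1\cdots w_m$ gives $r(P(w))\equiv w$, so $r(P(w))$ is reduced and represents the same permutation as $w$ whenever $w$ is reduced.

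For well-definedness I would prove the single-step lemma by following the insertion path. Preservation of row-strictness is essentially built into the rules (one bumps the least strictly-larger entry, or passes $z'+1$ downward in the equality case); preservation of column-strictness, and the absence of a newly created equal pair in a column, requires a short case analysis comparing the value bumped out of a row with the entry directly above its landing position in the next row. For the reading word, one observes that replacing a row $P_{i+1}$ together with the incoming value by the updated row and the ejected value alters $r$ only by a bounded sequence of Coxeter--Knuth moves; composing these over the rows on the insertion path yields $r(P')\equiv r(P)\cdot x$. Finally, each insertion adds exactly one cell, always an outer corner of the current shape, and that cell receives the next integer in $Q$; hence $Q(w)$ is a standard Young tableau of the same shape as $P(w)$, and the image lies in the asserted set.

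Injectivity follows from a reverse insertion driven by $Q$. Given $(P,Q)$ with $|Q|=m$, the cell $c$ carrying the entry $m$ of $Q$ is the last cell created, hence an outer corner of $\mathrm{shape}(P)$; starting from the entry of $P$ in $c$ and reverse-bumping up the rows --- displacing in each row the largest entry strictly below the incoming value, with the adjustment mirroring the equality rule --- ejects a letter $a$ from the first row and leaves a tableau $\tilde P$. One checks that reverse-bumping is a two-sided inverse of forward bumping, so $P(w_1\cdots w_{m-1})=\tilde P$, $Q(w_1\cdots w_{m-1})$ equals $Q$ with cell $c$ and its entry deleted, and $w_m=a$. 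Iterating down to the empty tableau reconstructs $w$ uniquely.

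Surjectivity is where the hypotheses on the target are used, and I expect it to be the main obstacle. Starting from any pair $(P,Q)$ with $P$ row- and column-strict, $r(P)$ reduced, and $Q\in\mathrm{SYT}(\mathrm{shape}(P))$, run the reverse-insertion procedure of the previous paragraph. It never stalls mechanically; the content is to show that each intermediate tableau $\tilde P$ is again row- and column-strict \emph{with reduced reading word}. For the latter, the reverse-bumping path, read as a word transformation, gives $a\cdot r(\tilde P)\equiv r(P)$ in the Coxeter--Knuth sense, so reducedness of $r(P)$ forces reducedness of $r(\tilde P)$, and the induction continues to $\emptyset$; the successively ejected letters then spell a reduced word $w$ with $(P(w),Q(w))=(P,Q)$. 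The crux is precisely this propagation of reducedness through a bump --- equivalently, pinning down which Coxeter--Knuth moves occur as the insertion path crosses successive rows, and checking that the special ``equality'' rule (passing $z'+1$ rather than $z'$) is exactly what makes a braid move, rather than an illegal column repetition, appear. A more structural alternative is to prove $P(r(P))=P$ for every such $P$ and then argue that the Coxeter--Knuth class of $r(P)$ has exactly $f^{\mathrm{shape}(P)}$ elements, so that the injection $w\mapsto Q(w)$ from that class into $\mathrm{SYT}(\mathrm{shape}(P))$ already established is forced to be a bijection; but pinning down that class size carries essentially the same difficulty, packaged through jeu de taquin or dual equivalence.
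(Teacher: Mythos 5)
The paper itself offers no proof of this statement---it is imported verbatim from \cite[Theorem 6.25]{EG87}, with only its ingredients (\cref{reading}, \cref{cox}) restated later---so the comparison is with the original Edelman--Greene argument. Your plan reproduces that argument's architecture faithfully: a single-step lemma controlling row/column strictness and the Coxeter--Knuth class of the reading word, reverse insertion for injectivity, and propagation of validity (or, alternatively, $P(r(P))=P$ together with a count of the Coxeter--Knuth class) for surjectivity.

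There is, however, a genuine gap, and it sits exactly where you write ``one checks that reverse-bumping is a two-sided inverse of forward bumping.'' This cannot be a routine verification, because $w\mapsto(P(w),Q(w))$ is \emph{not} injective on arbitrary words: the sets $\mathcal{W}_Q$ of \cref{S:nonreduced} consist of words all sharing the same insertion and recording tableaux, and \cref{posets} shows these sets typically have many elements. Concretely, when a value $v$ arrives at row $i$ during reverse insertion, one must decide whether it arose from the replacement case (reinstate $v$ in row $i$ and send the displaced entry upward) or from the equality case (row $i$ is untouched and $v-1$ is sent upward); for a general tableau these two readings are genuinely ambiguous, and that ambiguity is the source of the non-injectivity just mentioned. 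Resolving it requires the structural lemma that, for a row- and column-strict $P$ with $r(P)$ reduced, the equality case occurs only when $x_i$ and $x_i+1$ both lie in the row in question---which is also precisely what makes a legitimate braid move $a\,(a{+}1)\,a\leftrightarrow(a{+}1)\,a\,(a{+}1)$, rather than an illegal repetition, appear in your reading-word computation. Relatedly, your single-step lemma is false as stated without a reducedness hypothesis: inserting $1$ into the one-row tableau $1\,3$ passes $2$ to the second row and leaves the first row unchanged, yet $131$ and $213$ are not Coxeter--Knuth equivalent (they do not even represent the same permutation). You do flag this circle of issues under surjectivity, calling it the main obstacle there, but it already bears the full weight of well-definedness and injectivity; until that lemma about the equality rule is actually proved, all three parts of the argument remain open.
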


Each of the $P^{(k)}$, $1 \leq k \leq m$, is going to contain some number of entries such that $P^{(k)}_{i, j} = i + j - 1$. We call the region of $P^{(k)}$ formed by such entries the \emph{frozen region} and say that an insertion tableau is \emph{frozen} if the tableau is entirely a frozen region. The reason for using this terminology is that the frozen region does not change during the Edelman--Greene insertion. See $P$ in \cref{diagram}. The frozen region is white in the example. It turns out that $P(w)$ is always frozen when $w \in \mathcal{R}(n)$, and in fact, as we will see later in \cref{frozenp}, more generally if and only if $w \in \mathcal{R}(\sigma)$ with $\sigma$ 132-avoiding. Frozen tableaux have previously appeared in the literature on the combinatorics of K-theory under the name \emph{minimal increasing tableaux}, see, for example,~\cite{BS16} and~\cite{HKPWZZ17}.

\begin{thm}[{\cite[Theorem 6.26]{EG87}}]\label[thm]{egstaircase}
	Suppose $w \in \mathcal{R}(n)$. Then $P(w)$ is frozen and $Q(w) \in \mathrm{SYT}(\mathrm{sc}_n)$. The map $\mathrm{EG}(w): w \mapsto Q(w)$ is a bijection from $\mathcal{R}(n)$ to $\mathrm{SYT}(\mathrm{sc}_n)$.
\end{thm}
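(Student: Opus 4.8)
The plan is to reduce everything to \cref{egmain} together with two elementary inequalities on the entries of an insertion tableau, rather than re-analysing the insertion algorithm from scratch. First I would pin down the shape of $P(w)$ for a fixed $w\in\mathcal R(n)$. Since $\mathrm{inv}(n(n-1)\cdots1)=\binom n2$, the word $w$ has length $\binom n2$, so $P(w)$ has exactly $\binom n2$ cells. Now record two bounds on an arbitrary entry $P_{i,j}$. For the lower bound, column-strictness down the first column gives $P_{i,1}\ge i$, and row-strictness along row $i$ then gives $P_{i,j}\ge P_{i,1}+(j-1)\ge i+j-1$. For the upper bound, \cref{egmain} says the reading word $r(P(w))$ is reduced, and since Edelman--Greene insertion preserves the underlying permutation it is in fact a reduced word of $n(n-1)\cdots1$; hence every entry of $P(w)$ lies in $[n-1]$. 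The two bounds give $i+j-1\le n-1$ for every cell $(i,j)$, so the shape of $P(w)$ is contained in $\mathrm{sc}_n$, and as both have $\binom n2$ cells it equals $\mathrm{sc}_n$. Frozenness then follows automatically: the rightmost cell of row $i$ of $\mathrm{sc}_n$ is $(i,n-i)$, where $i+j-1=n-1$ forces $P_{i,n-i}=n-1$, and an induction on $j$ from $j=n-i$ downwards, using $P_{i,j}\le P_{i,j+1}-1$ together with $P_{i,j}\ge i+j-1$, yields $P_{i,j}=i+j-1$ throughout. Thus $P(w)$ is the unique frozen tableau $T$ of shape $\mathrm{sc}_n$, and since $Q(w)$ has the same shape as $P(w)$ and is standard, $Q(w)\in\mathrm{SYT}(\mathrm{sc}_n)$.

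For the bijection I would use the full correspondence of \cref{egmain}. Injectivity is immediate: if $Q(w)=Q(w')$ for $w,w'\in\mathcal R(n)$ then, because $P(w)=P(w')=T$, the pairs $(P(w),Q(w))$ and $(P(w'),Q(w'))$ coincide, so $w=w'$. For surjectivity, given $Q\in\mathrm{SYT}(\mathrm{sc}_n)$, I would check that $(T,Q)$ meets the hypotheses of \cref{egmain}: $T$ is row- and column-strict, and choosing any $u\in\mathcal R(n)$ (which exists, as every permutation has a reduced word) we have $T=P(u)$, so $r(T)$ is reduced; moreover $T$ and $Q$ share the single shape $\mathrm{sc}_n$ and $Q$ is standard. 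Hence $(T,Q)=(P(w),Q(w))$ for a unique word $w$, which is reduced for some $\sigma$ with $\mathrm{inv}(\sigma)=|\mathrm{sc}_n|=\binom n2$. By permutation-preservation $\sigma$ is the permutation of $r(T)$, all of whose letters lie in $[n-1]$, so $\sigma\in\S_n$; and the unique element of $\S_n$ with $\binom n2$ inversions is $n(n-1)\cdots1$. Therefore $w\in\mathcal R(n)$ and $\mathrm{EG}(w)=Q(w)=Q$.

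The inequalities themselves are short, so the real subtlety is the alphabet bookkeeping they rely on: one needs that Edelman--Greene insertion of a reduced word of $\sigma$ returns a tableau whose reading word is a reduced word of the \emph{same} $\sigma$. That is what bounds the entries of $P(w)$ above by $n-1$, and dually what keeps the preimage of $(T,Q)$ a word over $[n-1]$ in the surjectivity step, so that $\sigma$ does not accidentally land in a larger symmetric group. This is standard from \cite{EG87} --- their insertion is a composite of Coxeter--Knuth moves, which preserve the permutation --- but it is the step a careless argument would gloss over. If one wished to avoid \cref{egmain} altogether, the alternative is to prove the theorem directly, analysing the insertion and exhibiting an explicit inverse, as Edelman and Greene do.
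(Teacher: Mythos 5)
Your argument is correct, but it is worth saying up front that the paper does not prove this statement at all: \cref{egstaircase} is quoted verbatim from Edelman--Greene as \cite[Theorem 6.26]{EG87}, so there is no in-paper proof to match. What you have done is reconstruct a proof from the other quoted ingredients, essentially \cref{egmain} together with \cref{cox} (that $r(P(w))$ is a reduced word of the \emph{same} permutation as $w$). The two inequalities $i+j-1\le P_{i,j}\le n-1$ do pin the shape inside $\mathrm{sc}_n$, the cell count $\binom{n}{2}$ forces equality, and the downward induction along each row then forces $P_{i,j}=i+j-1$, so $P(w)$ is the unique frozen staircase tableau $T$; injectivity and surjectivity of $w\mapsto Q(w)$ then fall out of \cref{egmain} exactly as you say. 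This is also consistent with the paper's own later machinery: the reverse permutation is $132$-avoiding, so \cref{frozenp} (derived from \cref{frozendiagram}) would give frozenness of $P(w)$ too, but your direct argument is more elementary for this special case and does not risk circularity, since \cref{egstaircase} is not used in the proof of \cref{frozendiagram}. The one ingredient you use twice without proof is that every reduced word of a permutation in $\S_n$ has all letters in $[n-1]$ (needed for the upper bound on the entries of $P(w)$, and again to place $\sigma$ in $\S_n$ in the surjectivity step); this is the standard well-definedness of the support of a Coxeter group element, and you rightly identify it as the step a careless reading would gloss over, but it deserves an explicit citation or a one-line argument rather than the phrase ``permutation-preservation'' alone.
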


Continuing in the setting of \cref{egstaircase}, if $w \in \mathcal{R}(n)$, the inverse to the Edelman--Greene bijection takes a very special form. To define it, we have to introduce Sch\"utzenberger's \emph{jeu de taquin}. For a good introduction, we refer to~\cite{St99} or~\cite{Sa01}, although the terminology is slightly different. 

Let $T$ be a partially filled Young diagram with increasing rows and columns, and each entry $1 \leq k \leq \max_{(i, j) \in T} T_{i, j}$ occurring exactly once. The \emph{evacuation path} of $T$ is a sequence of cells $\pi_1, \dots, \pi_s$ such that \begin{itemize}
	\item $\pi_1 = (i_{\mathrm{max}}, j_{\mathrm{max}})$, the location of the largest entry of $T$,
	\item if $\pi_k = (i, j)$, then $\pi_{k+1} = (i', j') \in T$ such that $T_{i', j'} = \max \{T_{i, j-1}, T_{i-1, j}\} > -\infty$ with the convention $T_{i, j} = -\infty$ for $(i, j) \not \in T$ and for unlabeled $(i, j) \in T$.
\end{itemize} 
Next, define the tableau $T^{\partial}$ by \begin{itemize}
	\item removing the label of $T_{\pi_1}$,
	\item and sliding the labels along the evacuation path: $T_{\pi_1} \leftarrow T_{\pi_2} \leftarrow \dots \leftarrow T_{\pi_s}$.
\end{itemize}
A single application of $\partial$ is called an \emph{elementary promotion}. Whenever a label $1 \leq \ell \leq T_{\pi_1}$ slides from some cell $(i, j)$ to $(i, j+1)$ (respectively $(i+1, j)$) in applying $\partial$ until all labels have been removed is referred to as a \emph{right slide} (respectively \emph{downslide}). For $w \in \mathcal{R}(n)$, the inverse to the Edelman--Greene bijection can then be defined as follows. 

\begin{thm}[{\cite[Theorem 7.18]{EG87}}]
	Suppose $Q \in \mathrm{SYT}(\mathrm{sc}_n)$. Apply $\partial$ until all labels have been cleared and say that $\pi_1^{(k)} = (i_k, j_k)$ is the first cell of the evacuation path $\pi^{(k)}$ for the $k$:th iteration. Then $\mathrm{EG}^{-1}(Q) = j_{\binom{n}{2}} \dots j_k \dots j_1$.
\end{thm}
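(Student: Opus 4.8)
The plan is to show that the iterated promotion procedure computes $\mathrm{EG}^{-1}$ by ``un-inserting'' the letters of $w := \mathrm{EG}^{-1}(Q)$ one at a time, in reverse order of insertion. By \cref{egstaircase}, for every $w \in \mathcal R(n)$ the insertion tableau $P(w)$ is the frozen staircase $P_0$ --- the tableau of shape $\mathrm{sc}_n$ with $(P_0)_{i,j} = i+j-1$ --- so all of $w$ is stored in $Q = Q(w)$, and $w \mapsto Q(w)$ is a bijection $\mathcal R(n) \to \mathrm{SYT}(\mathrm{sc}_n)$. Writing $P^{(k)}, Q^{(k)}$ for the insertion and recording tableaux of the prefix $w_1 \cdots w_k$, the tableau $Q^{(k-1)}$ is obtained from $Q^{(k)}$ by merely deleting the cell carrying the maximal label, so the whole problem is to recover the letter $w_k$. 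The tempting move is to read $w_k$ off from a reverse bump of $P^{(k)}$ at that cell, but --- unlike for ordinary RSK --- this is \emph{not} determined by $P^{(k)}$ alone: an entry sitting in a row may have been placed there either by a genuine bump (case (iii) of the insertion) or by the equal-elements rule (case (ii)), and the two readings produce different $P^{(k-1)}$; one already sees this ambiguity for $n = 4$. The theorem asserts that $\partial$, acting only on the recording tableau, always makes the correct choice.

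I would make this precise through the jeu-de-taquin description of Edelman--Greene insertion --- the analogue for reduced words of Schützenberger's theorem that RSK insertion is jeu-de-taquin rectification. The steps would be: (a) realise $P(w)$ as the rectification of the skew tableau that places $w_1, \dots, w_m$ along an anti-diagonal, with $Q(w)$ recording which inner corner is filled at each elementary slide --- here the slides must be set up to reproduce EG's equal-elements rule; (b) deduce that the inverse map sends $Q$ to the reverse rectification of $P_0$ dictated by $Q$, which recovers the skew tableau, and hence $w$; (c) identify a single reverse slide --- the one vacating the cell that currently holds the maximal label --- with exactly one application of $\partial$, and verify that the letter it releases is the column $j$ of the starting cell $\pi_1$ of the evacuation path. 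Step (c) is where the frozen tableau pays off: since $(P_0)_{i,j} = i+j-1$ depends only on the anti-diagonal index $i+j$, a reverse slide that vacates a cell of $P_0$ lying on anti-diagonal $i+j$ releases precisely the letter $j$, so the column index alone records the letter. Iterating over the $\binom{n}{2}$ steps then gives $w_m = j_1$, $w_{m-1} = j_2$, \dots, $w_1 = j_{\binom{n}{2}}$, i.e.\ $\mathrm{EG}^{-1}(Q) = j_{\binom{n}{2}} \cdots j_1$.

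The hard part is carrying out (a)--(c): building the jeu-de-taquin description of EG insertion with the equal-elements rule correctly incorporated --- it is exactly this rule that separates EG from RSK and that is responsible for the ambiguity of the naive reverse bump --- and then matching reverse slides with the operator $\partial$ as defined in the excerpt. For the well-definedness I would rely on the Coxeter--Knuth machinery of \cite{EG87}, in particular the fact that $r(P)$ is reduced for every tableau arising as an insertion tableau, so that the reverse insertion becomes canonical once the insertion tableau is known; this reduces the statement to the purely combinatorial fact about $\partial$ acting on $\mathrm{SYT}(\mathrm{sc}_n)$ described in (c). An alternative route that avoids jeu de taquin would be to track the reading-word permutation of $P^{(k)}$ in order to disambiguate the reverse bump, and then check directly that $\partial$ reproduces this bookkeeping, but that seems more cumbersome than the rectification argument.
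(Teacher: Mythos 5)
The first thing to say is that the paper does not prove this statement: it is imported verbatim as \cite[Theorem 7.18]{EG87} and only illustrated with a worked example, so there is no in-paper argument to measure your submission against, and it has to stand on its own. As it stands it is a proof plan rather than a proof. Your framing is sound: $P(w)$ is always the frozen staircase by \cref{egstaircase}, deleting the maximal cell of $Q$ does recover the recording tableau of the prefix, and you are right that the equal-elements rule makes the naive reverse bump ambiguous from $P^{(k)}$ alone (Section 4 of the paper, where many non-reduced words share one pair $(P,Q)$, confirms this). But steps (a)--(c) --- building the jeu-de-taquin model of Edelman--Greene insertion compatible with the equal-elements rule, identifying $\mathrm{EG}^{-1}$ with reverse rectification, and matching one reverse slide with one application of $\partial$ acting on $Q$ --- are not preliminaries to the theorem; they \emph{are} the theorem, and the substance of Sections 6--7 of \cite{EG87}. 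You defer all of them.

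The one step you do try to justify, (c), is wrong as written. You argue that because $(P_0)_{i,j}=i+j-1$ depends only on the antidiagonal index $i+j$, a reverse slide vacating a cell on antidiagonal $i+j$ releases the letter $j$. But $j$ is not a function of $i+j$, so constancy along antidiagonals cannot single out the column index, and the entry stored at the vacated cell is $i+j-1$, not $j$. The correct mechanism is different: in a frozen tableau the reverse bumping path from $(i,j)$ runs straight up column $j$, each step being the reversal of the equal-elements case, so the letter ejected from row $1$ is $(P_0)_{1,j}=j$. Even granting that, the argument only applies verbatim to the first letter, because the intermediate tableaux $P^{(k)}$ for $k<\binom{n}{2}$ are generally not frozen (by \cref{frozenp} they are frozen only when the prefix permutation is $132$-avoiding), so ``the frozen tableau pays off'' cannot be invoked step by step. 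Handling the later steps is exactly what forces the rectification machinery you postpone, and it is also where the identification of the evacuation path in $Q$ (which turns toward the larger of the left and upper neighbours) with the relevant reverse slide in the insertion tableau must actually be proved --- the two are different objects, and already in the paper's $3$-row example the evacuation path from $(2,2)$ continues on to $(1,1)$ while the reverse bump stops after one step. Until (a)--(c) are carried out and the column-index claim is argued by the vertical-path mechanism rather than the antidiagonal one, this remains an outline with a gap at its center.
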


Consider again the example following \cref{egbij}. Applying $\partial$ yields the sequence \begin{align*}
Q = \begin{ytableau} \scriptstyle 1 & \scriptstyle 4 & \scriptstyle 5\\
\scriptstyle 2 & \scriptstyle 6 \\
\scriptstyle 3 \end{ytableau}\ &\overset{\partial}{\longrightarrow}\ \begin{ytableau} \  & \scriptstyle 1 & \scriptstyle 5\\
\scriptstyle 2 & \scriptstyle 4 \\
\scriptstyle 3 \end{ytableau}\ \overset{\partial}{\longrightarrow}\ \begin{ytableau} \ & \ & \scriptstyle 1\\
\scriptstyle 2 & \scriptstyle 4 \\
\scriptstyle 3 \end{ytableau}\ \overset{\partial}{\longrightarrow}\ \begin{ytableau} \ & \ & \scriptstyle 1\\
\ & \scriptstyle 2 \\
\scriptstyle 3 \end{ytableau}\ \\ &\overset{\partial}{\longrightarrow}\ \begin{ytableau} \ & \ & \scriptstyle 1\\
\ & \scriptstyle 2 \\ \ \end{ytableau}\ \overset{\partial}{\longrightarrow}\ \begin{ytableau} \ & \ & \scriptstyle 1 \\ \ & \ \\ \ \end{ytableau}\ \overset{\partial}{\longrightarrow}\ \begin{ytableau} \ & \ & \ \\ \ & \ \\ \ \end{ytableau}.\end{align*} The largest entries are in the cells $\pi_1^{(1)} = (2, 2), \pi_1^{(2)} = (1, 3) , \pi_1^{(3)} = (2, 2), \pi_1^{(4)} = (3, 1), \pi_1^{(5)} = (2, 2)$ and $\pi_1^{(6)} = (1, 3)$. Hence, $\mathrm{EG}^{-1}(Q) = 321232$ as expected.

Another important operator will be the so-called \emph{evacuation} $S$, which is in some sense dual to promotion. If $T$ is a standard Young tableau, $T^S$ is defined by setting $T^S_{i, j} = k$ if and only if $(i, j)$ is not labeled in $T^{\partial^{k}}$ but is labeled in $T^{\partial^{k-1}}$. Thus $T^S$ records when cells become empty in iterating the elementary promotion $\partial$ for $T$. Returning to the previous example, we would have \[
Q^S = \begin{ytableau} \scriptstyle 1 & \scriptstyle 2 & \scriptstyle 6\\
\scriptstyle 3 & \scriptstyle 5 \\
\scriptstyle 4 \end{ytableau}.\] In his original work~\cite{Schu63}, Sch\"utzenberger proved a remarkable property of the operator $S$: it is an involution.

\section{Frozen regions and diagrams}
This section aims to prove our main result. Before proceeding with the proof, we need to review some additional properties of the Edelman--Greene bijection. The results below are due to Edelman and Greene. Recall that the reading word $r(P)$ of a Young tableau $P$ is formed by reading the entries of $P$ row by row, starting from the bottom row.
\begin{lem}[{\cite[Lemma 6.22]{EG87}}]\label[lem]{reading}
If $P$ is row and column strict, then $P(r(P)) = P$. 
\end{lem}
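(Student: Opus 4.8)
The plan is to prove $P(r(P)) = P$ by induction on the number of rows of $P$, reading the rows from the bottom up, and showing that inserting the entries of a single row (in left-to-right order) into a tableau whose rows all lie strictly below $P$'s bottom row simply appends that row on top without disturbing anything below it. The key structural observation is that the Edelman--Greene insertion (like RSK) has the property that when we insert an element $x$ that is strictly smaller than everything currently in row $1$, the bump in row $1$ displaces the first (leftmost) entry of row $1$, which then gets inserted into row $2$, and so on; and if $x$ is at least as large as everything in row $1$ it is simply appended to row $1$. So the heart of the matter is a one-row lemma: if $P$ is already row-and-column strict and $R = (r_1 < r_2 < \dots < r_k)$ is a strictly increasing sequence such that $R$ can legitimately sit as a new top row over $P$ (i.e. $r_j < P_{1,j}$ for all $j$ in the overlap, and the result is column strict), then inserting $r_1, r_2, \dots, r_k$ in order into $P$ yields exactly the tableau with $R$ placed on top of $P$ and $P$ untouched.

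**First I would** set up the induction. Write $P$ with rows $P_1, P_2, \dots, P_\ell$ from top to bottom. Let $P'$ be the tableau consisting of rows $P_2, \dots, P_\ell$. Since $P$ is row and column strict, so is $P'$, and $P'$ has one fewer row, so by the inductive hypothesis $P(r(P')) = P'$. Now $r(P) = r(P') \cdot P_1$ (concatenation: the reading word of $P$ is the reading word of the lower part followed by the top row read left to right). Thus computing $P(r(P))$ amounts to: first build $P'$ via $r(P')$, then insert the entries of $P_1 = (a_1 < a_2 < \dots < a_{\lambda_1})$ one at a time. The base case $\ell = 0$ (empty tableau) or $\ell = 1$ (a single strictly increasing row, which is just appended entry by entry) is immediate from the definition of insertion.

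**The key step** is then the one-row claim applied with the roles: we are inserting the top row $P_1$ of $P$ into $P'$, and we must check that $P_1$ genuinely qualifies as a "new top row" over $P'$. But that is exactly the hypothesis that $P$ is column strict: $P_1$ lies above $P_2$ with $P_{1,j} < P_{2,j}$ wherever both are defined. To prove the one-row claim, I would induct on $k$ (the length of the row being inserted) or, more cleanly, trace a single insertion: inserting $a_1$, the smallest new element, into $P'$ — since $a_1 < P'_{1,1}$ (the column-strictness between $P_1$ and $P_2$), $a_1$ bumps $P'_{1,1}$ into $P'$'s second row, which by column strictness of $P'$ in turn bumps its first entry downward, and so on down a staircase; the "otherwise" branch of the insertion always applies and always bumps the leftmost entry of each successive row, because each element pushed down is strictly smaller than everything in the row below it. One must check that the equality case ($x_i = z'$) never intervenes here; this follows because the elements being bumped are the original entries $P'_{1,1}, P'_{2,1}, \dots$ and these are pairwise related by strict column inequalities that preclude the forbidden coincidences. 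The net effect of inserting $a_1$ is: a new cell is created at the bottom of column $1$, every entry in column $1$ shifts down by one, and $a_1$ becomes the new $(1,1)$ entry. Iterating, inserting $a_2, \dots, a_{\lambda_1}$ extends column $2$, column $3$, \dots similarly (each insertion now goes one column to the right because, after the previous insertions, rows $1$ through some depth already contain the $a$'s and are strictly increasing), rebuilding row $P_1$ on top and leaving $P'$ precisely where it was.

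**The main obstacle** I anticipate is bookkeeping the bumping path precisely — verifying that at every step the "otherwise" (strict-less-than) branch fires rather than the equality branch, and that the bumped element is always the leftmost of its row, so that the insertion path stays in a single column and the lower tableau is reconstructed verbatim. This is a careful but routine case analysis using row- and column-strictness of $P$; the one genuinely delicate point is handling the Edelman--Greene-specific equality rule ($x_i = z'$ causes $x_{i+1} = z'+1$ to move to the row below rather than replacing), and showing it is never triggered along these insertion paths because the relevant entries are distinct by strictness. Once the one-row lemma is in hand, the outer induction closes immediately.
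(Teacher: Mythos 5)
Your argument is correct: the outer induction on rows together with the one-row bumping lemma (inserting $a_j = P_{1,j}$ travels straight down column $j$, bumping $P_{2,j}, P_{3,j}, \dots$ in turn, with the equality branch never firing because every entry of the receiving row other than the column-$j$ entry is either strictly smaller, by row-strictness, or strictly larger, by column-strictness combined with row-strictness of the row below) is essentially the standard proof of this fact. Note that the paper under review does not prove this lemma itself but cites it from Edelman--Greene, whose Lemma 6.22 is established by the same row-by-row reading-word induction you describe, so your route coincides with the source's.
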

\begin{lem}[{\cite[a part of Lemma 6.23]{EG87}}]\label[lem]{cox}
If $w \in \mathcal{R}(\sigma)$, then $P(w)$ is row and column strict, and $r(P(w)) \in \mathcal{R}(\sigma)$.
\end{lem}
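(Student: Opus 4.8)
The plan is to prove the two assertions by induction on $m = \mathrm{len}(w)$, with the inductive step isolated into a statement about a single Edelman--Greene insertion. Row and column strictness of $P(w)$ is in fact a property of the insertion procedure alone and does not use that $w$ is reduced, so it suffices to check that one insertion step sends a row- and column-strict tableau to a row- and column-strict tableau. Assuming $P$ is row and column strict, insert $x = x_0$ and follow the bumping route $x_0 \to x_1 \to \cdots$. I would maintain the invariant that when $x_i$ enters row $P_{i+1}$ it is appended at, or acts on, a cell $(i+1, c_i)$ with the column indices $c_0 \geq c_1 \geq \cdots$ weakly decreasing, and that $x_i > P_{i, c_i}$ (the entry directly above the cell it acts on; this is vacuous for $i = 0$). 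Granting the invariant, row strictness at the affected cell is immediate — one either appends an entry larger than the whole row, replaces the least entry that is $\geq x_i$ by $x_i$, or, in the equal-element case, leaves the row untouched — and column strictness follows from $x_i > P_{i, c_i}$ together with the fact that the entry below, $P_{i+1, c_i}$, is only ever replaced by something no larger than itself. Propagating the two invariants through the three cases of the insertion rule is routine; one checks separately that the value handed to the next row ($a_j$ in an ordinary bump, $a_j+1$ in the equal-element rule) is again strictly larger than the entry above the cell it will act on.

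For the reducedness statement, note first that $P(w)$ always has exactly $m$ cells, so $\mathrm{len}(r(P(w))) = m = \mathrm{inv}(\sigma)$; hence it suffices to show that $r(P(w))$ and $w$ represent the same permutation, reducedness then being automatic from length-minimality. I would prove the stronger claim that $r(P(w))$ is Coxeter--Knuth equivalent to $w$, where $\equiv$ denotes the equivalence generated by the length-preserving rewrites $a\,(a+1)\,a \equiv (a+1)\,a\,(a+1)$ (braid move) and $a\,c \equiv c\,a$ whenever $|a - c| \geq 2$ (commutation move). Both kinds of move leave $s_{w_1}\cdots s_{w_m}$ and the number of letters unchanged, so $\equiv$ preserves membership in $\mathcal{R}(\sigma)$. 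The inductive step then reduces to the single-insertion lemma: if $P$ is row and column strict, $r(P)\,x$ is a reduced word, and $P'$ is the result of inserting $x$ into $P$, then $r(P') \equiv r(P)\,x$. Granting this, writing $w = w'\,w_m$ with $w' = w_1\cdots w_{m-1}$, induction gives $r(P(w')) \equiv w'$, whence $r(P(w'))\,w_m \equiv w$ is reduced and the single-insertion lemma applies to $P(w')$ and $w_m$, yielding $r(P(w)) \equiv w$.

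To prove the single-insertion lemma I would follow the bumping route row by row, from the top row — which sits at the very end of the reading word — downward, tracking the block of the reading word occupied by each row. When the current incoming value $v$ meets row $P_{i+1} = (a_1 < \cdots < a_k)$, either $v > a_k$, in which case that row's block simply acquires $v$ at its right end and the route stops, or $v \leq a_k$ and, with $a_{j-1} < v \leq a_j$, the algorithm either replaces $a_j$ by $v$ and hands $v' = a_j$ to the next row (ordinary bump, $v < a_j$) or leaves the row fixed and hands $v' = a_j + 1$ (equal-element rule, $v = a_j$). The claim is that the local block $a_1\cdots a_k\,v$ of the reading word can be rewritten, by commutation and braid moves, into $v'\,a_1'\cdots a_k'$, where $a_1'\cdots a_k'$ is the updated row and $v'$ is exactly the value fed further left into the block of row $i+2$. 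In an ordinary bump one first threads $v$ leftward past $a_k, \dots, a_{j+1}$ by commutations (each of these entries is $\geq a_j + 1 \geq v+2$) and then carries $a_j$ to the front of the block past $a_{j-1}, \dots, a_1$ by commutations (here $a_j \geq a_{j-1}+2$, since no integer lies strictly between $a_{j-1}$ and $v$ and none strictly between $v$ and $a_j$ would leave room); in the equal-element case the reducedness of $r(P)\,x$ forces $a_{j+1} = a_j + 1 = v+1$, so the junction triple $a_j\,a_{j+1}\,v$ equals $v\,(v+1)\,v$ and a braid move turns the block into $a_1\cdots a_{j-1}\,(v+1)\,v\,(v+1)\,a_{j+2}\cdots a_k$, after which commutations carry the new $v+1$ to the front, leaving the row unchanged. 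Chaining these rewrites over all rows, and using that the blocks of rows strictly above (earlier in the reading word has been processed, later in it are untouched) are not affected once the route has passed, gives $r(P') \equiv r(P)\,x$.

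I expect the main obstacle to be precisely this single-row rewriting, and within it the equal-element case. One has to carry out a finite case analysis on the relative sizes of $v$ and the neighbours $a_{j-1}, a_j, a_{j+1}$, interleaved with an induction on how far leftward the active value has been moved, and verify both that every triple touched matches an allowed Coxeter--Knuth pattern and that, in the equal-element case, the reducedness hypothesis genuinely rules out the configurations ($a_j = v$ with $a_{j+1} \neq a_j+1$, or with $j = k$) that would otherwise produce a non-reduced — hence impossible — word, so that the value shift $v \mapsto v+1$ is realized by an honest braid move while the row is left fixed. None of this is deep, but it is where all the care goes, and it is the step that pins down the particular Edelman--Greene bumping rule as opposed to the ordinary Knuth rule.
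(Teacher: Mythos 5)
Your proposal is correct; note that the paper itself gives no proof of this lemma but simply cites \cite[Lemma 6.23]{EG87}, and your argument is essentially a self-contained reconstruction of Edelman--Greene's: an induction on one insertion step, proving row/column strictness by tracking that the bumping path moves weakly left, and proving reducedness by rewriting $r(P)\,x$ into $r(P')$ with commutations plus a single braid move per equal-element bump (your observation that reducedness forces $a_{j+1}=a_j+1$ there is exactly the key point). The only cosmetic difference is that Edelman--Greene phrase the rewriting in terms of the finer Coxeter--Knuth equivalence, whereas you use plain Coxeter equivalence (braid and commutation moves), which preserves the group element and word length and therefore suffices for the statement as given.
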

Our goal is to show that the shape of the frozen region of $P^{(k)}$ corresponds to the shape of one part of the so-called diagram of $\sigma = s_{w_1}s_{w_2} \dots s_{w_k}$. The (Rothe) \emph{diagram} $D(\sigma)$ of a permutation $\sigma$ is the set of cells left unshaded when we shade all the cells weakly to the east and south of 1-entries in the permutation matrix $M(\sigma)$. In particular, we consider the (possibly empty) connected component of $D(\sigma)$ containing $(1, 1)$ which we call the \emph{top-left component} of the diagram and denote by $D_{(1,1)}(\sigma)$. The top-left component induces a partition which is denoted by $\lambda(\sigma)$. Similarly, the frozen region of the insertion tableau of a reduced word induces a partition $\lambda_f(w)$ since by \cref{egmain} the tableau is row and column strict.
See \cref{diagram} for an example.

\begin{figure}[htbp!]
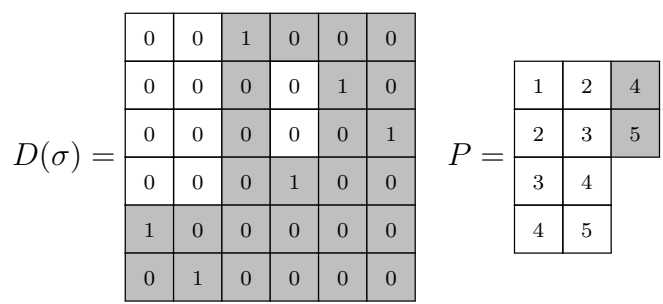

	\centering $D(\sigma) =
	\begin{ytableau}
		\scriptstyle 0 & \scriptstyle 0 & *(lightgray)\scriptstyle 1 & *(lightgray)\scriptstyle 0 & *(lightgray)\scriptstyle 0 & *(lightgray)\scriptstyle 0 \\
		\scriptstyle 0 & \scriptstyle 0 & *(lightgray)\scriptstyle 0 & \scriptstyle 0 & *(lightgray) \scriptstyle 1 & *(lightgray)\scriptstyle 0 \\
		\scriptstyle 0 & \scriptstyle 0 & *(lightgray)\scriptstyle 0 & \scriptstyle 0 & *(lightgray)\scriptstyle 0 & *(lightgray)\scriptstyle 1 \\
		\scriptstyle 0 & \scriptstyle 0 & *(lightgray)\scriptstyle 0 & *(lightgray)\scriptstyle 1 & *(lightgray)\scriptstyle 0 & *(lightgray)\scriptstyle 0 \\
		*(lightgray)\scriptstyle 1 & *(lightgray)\scriptstyle 0 & *(lightgray)\scriptstyle 0 & *(lightgray)\scriptstyle 0 & *(lightgray)\scriptstyle 0 & *(lightgray)\scriptstyle 0 \\
		*(lightgray)\scriptstyle 0 & *(lightgray)\scriptstyle 1 & *(lightgray)\scriptstyle 0 & *(lightgray)\scriptstyle 0 & *(lightgray)\scriptstyle 0 & *(lightgray)\scriptstyle 0 \\
	\end{ytableau}\quad
	P = \begin{ytableau}
	\scriptstyle 1 & \scriptstyle 2 & *(lightgray) \scriptstyle 4\\
	\scriptstyle 2 & \scriptstyle 3 & *(lightgray) \scriptstyle 5\\
	\scriptstyle 3 & \scriptstyle 4\\
	\scriptstyle 4 & \scriptstyle 5\\
	\end{ytableau}$
	\caption{The diagram $D(\sigma)$ and $P = P(w)$ for any $w \in \mathcal{R}(\sigma)$ for $\sigma = 561423$. The top-left component $D_{(1,1)}(\sigma)$ induces the partition $\lambda(\sigma) = (2, 2, 2, 2)$ and the frozen region of $P$ the partition $\lambda_f(w) = (2, 2, 2, 2)$.}
	\label[fig]{diagram}
\end{figure}
The following is one of our main results.
\begin{thm}\label[thm]{frozendiagram}
	If $w = w_1 \cdots w_{\ell}$ is reduced, then $\lambda(s_{w_1}\dots s_{w_{\ell}}) = \lambda_f(w)$. That is, the top-left component of the diagram of $s_{w_1}\dots s_{w_{\ell}}$ has the same shape as the frozen region of $P(w)$.
\end{thm}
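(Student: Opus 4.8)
The plan is to translate both sides into explicit combinatorics and then induct on the length $\ell=\operatorname{len}(w)$, appending one letter at a time.

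\smallskip

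\noindent\emph{Step 1 (closed forms for both sides).} First I would give a formula for $\lambda(\sigma)$. Unwinding the shading rule (with the transpose convention used here), a cell $(i,j)$ lies in $D(\sigma)$ iff $\sigma^{-1}(i)>j$ and $\sigma(j)>i$; chasing connectivity from $(1,1)$ then shows that the top-left component is exactly
\[
D_{(1,1)}(\sigma)=\{(i,j):\sigma(1),\dots,\sigma(j)>i\},
\]
i.e.\ $\lambda_i(\sigma)=\min_{1\le k\le i}\sigma^{-1}(k)-1$ is the length of the longest prefix of $\sigma$ all of whose entries exceed $i$. (For $\sigma=n\cdots1$ this gives $\operatorname{sc}_n$, consistent with \cref{egstaircase}.) On the tableau side, since $P_{i,j}\ge i+j-1$ always, and any sequence of right/down steps raises $P$ by at least $1$ per step while raising $i+j-1$ by exactly $1$, the set $\{(i,j):P_{i,j}=i+j-1\}$ is automatically a Young diagram; and since a frozen cell is never disturbed by a later insertion, this set only grows along the EG process. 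So the theorem is equivalent to: for every $w\in\mathcal R(\sigma)$ and every $(i,j)$, one has $P(w)_{i,j}=i+j-1$ iff $\sigma(1),\dots,\sigma(j)>i$.

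\smallskip

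\noindent\emph{Step 2 (induction).} Write $w'=w_1\cdots w_\ell\,a$, so $\sigma'=\sigma s_a$ with $\sigma(a)<\sigma(a+1)$, and $P(w')$ is obtained by EG-inserting $a$ into $P(w)$. Comparing prefix minima of $\sigma$ and $\sigma'$ (which agree away from positions $a,a+1$) shows that $\lambda(\sigma')=\lambda(\sigma)$ \emph{unless} $\sigma(a)$ is a left-to-right minimum of $\sigma$, in which case $\lambda(\sigma')$ is $\lambda(\sigma)$ together with the vertical strip in column $a$ filling rows $\min(\sigma(1),\dots,\sigma(a))$ through $\min(\sigma(1),\dots,\sigma(a-1),\sigma(a+1))-1$. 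For the tableau side, assume inductively that the frozen region of $P(w)$ is $\lambda(\sigma)$. Put $m=\#\{i:\lambda_i(\sigma)\ge a\}=\min(\sigma(1),\dots,\sigma(a))-1$. Then the EG-insertion path of $a$ first runs straight down column $a$ through the frozen cells $(1,a),\dots,(m,a)$ via the equality-bumping rule (leaving them unchanged), arriving at row $m+1$ carrying the value $a+m$, which is precisely the would-be frozen value $(m+1)+a-1$ of the cell $(m+1,a)$. A short computation with the shape of the frozen region now shows: in the non-left-to-right-minimum case the path lands strictly to the right of column $a$ carrying a value too large to ever hit a frozen value, so $P(w')$ has the same frozen region as $P(w)$; in the left-to-right-minimum case it freezes $(m+1,a)$ and then keeps descending column $a$, freezing cells until it stops, and one must check the stopping row is exactly $\min(\sigma(1),\dots,\sigma(a-1),\sigma(a+1))-1$, matching the strip found above. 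The base case $w=\emptyset$, $\sigma=\mathrm{id}$ is trivial.

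\smallskip

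\noindent\emph{Main obstacle.} The hard part is controlling where the bumping path stops in the left-to-right-minimum case: showing it freezes neither too few nor too many cells of column $a$. The number it freezes is governed by the entries of $P(w)$ in the cells of column $a$ just below the frozen region, and these are \emph{not} determined by $\sigma$ alone (the outer shape of $P(w)$ varies over $\mathcal R(\sigma)$). So the natural remedy is to carry through the induction a slightly stronger statement pinning down these ``second-layer'' entries in terms of $\sigma$ — morally a shifted instance of the same rule, applied to $\sigma$ with its dominant part removed — and verify it is preserved by insertion alongside the main claim. A cleaner but less self-contained alternative would be to first prove that the frozen region of $P(w)$ is invariant under the braid and commutation moves linking the elements of $\mathcal R(\sigma)$ (braid moves fix $P(w)$ outright, so only commutations need checking), and then to establish the formula on a single conveniently chosen $w\in\mathcal R(\sigma)$, for instance one beginning with a reduced word of the dominant permutation $d_{\lambda(\sigma)}$.
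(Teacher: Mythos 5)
Your Step 1 is correct and is a genuinely useful reformulation: the identity $D_{(1,1)}(\sigma)=\{(i,j):\sigma(1),\dots,\sigma(j)>i\}$, the monotonicity of the frozen region along the insertion, and the description of how $\lambda(\sigma)$ changes when a letter $a$ is appended (nothing happens unless $\sigma(a)$ is a left-to-right minimum, in which case a vertical strip is added in column $a$) all check out. The problem is that the inductive step in Step 2 does not close, for exactly the reason you flag as the ``main obstacle'': once the bumping path of $a$ exits the frozen region at row $m+1$ carrying the value $a+m$, what it does next is governed by the \emph{non-frozen} entries of $P(w)$ in row $m+1$ (the columns between $\lambda(\sigma)_{m+1}+1$ and roughly $a$), and the induction hypothesis ``the frozen region of $P(w)$ equals $\lambda(\sigma)$'' says nothing about these. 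Concretely, in the non-left-to-right-minimum case you must rule out that $a+m$ strictly bumps the entry in cell $(m+1,a)$ — which would wrongly freeze that cell, since it would then acquire the value $a+m=(m+1)+a-1$ — and in the left-to-right-minimum case you must show the path lands exactly there and then continues down column $a$ for precisely the right number of rows. Neither is determined by $\sigma$ alone, since the outer shape and the ``second layer'' of $P(w)$ genuinely vary over $\mathcal{R}(\sigma)$. Neither of your two proposed remedies is carried out: the first (a strengthened hypothesis pinning down the second layer) is not formulated precisely enough to check that it is preserved, and the second requires proving that commutation moves preserve the frozen region even though they do not in general preserve $P(w)$ itself, which is a nontrivial piece of work you have not done.

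For comparison, the paper sidesteps the letter-by-letter induction entirely. By \cref{reading} and \cref{cox}, every $w\in\mathcal{R}(\sigma)$ satisfies $P(w)=P(r(P(w)))$ with $r(P(w))\in\mathcal{R}(\sigma)$, so it suffices to prove the statement for the single word $w=r(P(w))=p_m\cdots p_2p_1$ built from the rows of $P(w)$. For that word the entire tableau (not just its frozen part) is known, and one can multiply out the transpositions row by row and track directly where the values $1,\dots,\mathrm{len}(\lambda_f(w))+1$ end up in $\sigma$, which combined with your Step 1 description of $D_{(1,1)}(\sigma)$ gives $\lambda(\sigma)=\lambda_f(w)$. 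If you want to complete your argument, the most economical fix is to import these two lemmata and carry out your ``cleaner alternative'' in that form — one canonical word per insertion tableau rather than per permutation — instead of trying to control the second layer of $P(w)$ along the insertion.
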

\begin{proof} By \cref{reading} and \cref{cox}, it is enough to consider the case when $w = r(P(w)) = p_m \dots p_2 p_1$ where $p_i$ is the word formed by the letters in $P_i(w)$, the $i$:th row of $P(w)$. The remark below will be useful throughout.
\begin{rmrk}
	Let $\sigma(w) = s_{w_1} \cdots s_{w_k}$ for a word $w = w_1 \dots w_k$. Since $p_i, 1 \leq i \leq m$, is a strictly increasing word, a number in $\sigma(w)$ can move at most one step to the left in $\sigma(w)\sigma(p_i)$. The number in position $j$ moves $k$ steps to the right if $j(j+1)\cdots(j+k-1)$ is a subword of $p_i$.
\end{rmrk}

Let $\sigma = s_{w_1} \cdots s_{w_{\ell}}$. We will start by showing $\mathrm{len}(\lambda_f(w)) = \mathrm{len}(\lambda(\sigma))$, that is, there is a 1 in $(\mathrm{len}(\lambda_f(w)) + 1, 1)$ in $M(\sigma)$. By column and row-strictness, no transpositions in $p_m \dots p_i, i = \mathrm{len}(\lambda_f(w)) + 1$, affect the number $\mathrm{len}(\lambda_f(w)) + 1$ in the permutation. Hence, the first numbers $\mathrm{len}(\lambda_f(w)), \dots, 1$ of the rows $\mathrm{len}(\lambda_f(w)), \dots, 1$ form a (non-consecutive) sequence of transpositions $s_{\mathrm{len}(\lambda_f(w))}, \dots, s_1$ moving the number $\mathrm{len}(\lambda_f(w)) + 1$ to position 1 in $\sigma$. Thus $\mathrm{len}(\lambda_f(w)) = \mathrm{len}(\lambda(\sigma))$.

It remains to show that $\lambda_f(w)_i = \lambda(\sigma)_i$ for $1 \leq i \leq \mathrm{len}(\lambda_f(w))$. Consider the row $i$ with frozen part of length $\lambda_f(w)_i$, and the permutation $\sigma^i$ corresponding to the reduced word $w^i = p_m \cdots p_i$. By row and column-strictness, the letter $i$ does not appear in $w^{i+1}$. By the remark, the effect of the transpositions at indices $(p_i)_1 = i, \dots, (p_i)_{\lambda_f(w)_i} = (i + \lambda_f(w)_i - 1)$ is to move the 1 in row $i$ to column $i + \lambda_f(w)_i$ in $M(\sigma^i)$. Suppose $\lambda_f(w)_j = \lambda_f(w)_i$ for $i' \leq j \leq i$, and $\lambda_f(w)_j > \lambda_f(w)_i$ for all $1 \leq j < i'$. We will now prove that $\lambda(\sigma)_j = \lambda_f(w)_i$ for $i' \leq j \leq i$. This situation is illustrated in \cref{proofdiag}.\begin{figure}[htbp!]
\centering
\[
\begin{array}{c||c|c|c|c|c}
 & 1 & & \lambda_f(w)_i & \lambda_f(w)_i + 1 & \\	
\hline\hline
 & \vdots & \vdots & \vdots & \vdots & \\\hline
i' & 0 & \dots & 0 & \cellcolor{lightgray}1 & \cellcolor{lightgray} \dots\\\hline
& \vdots & \ddots & \vdots & \cellcolor{lightgray} \vdots &  \cellcolor{lightgray} \dots\\\hline
i & 0 & \dots & 0 & \cellcolor{lightgray}0 & \cellcolor{lightgray} \dots\\\hline
& \vdots & \cellcolor{lightgray}\vdots & \cellcolor{lightgray}\vdots & \cellcolor{lightgray}\vdots & \cellcolor{lightgray} \\

\end{array}
\]
\caption{The top-left component of the diagram of $\sigma$.}
\label[fig]{proofdiag}
\end{figure}

First, note that by the remark, in fact, for all $i' \leq j \leq i$, the 1 in row $j$ is moved by $p_j$ to column $j + \lambda_f(w)_i$ in $M(\sigma^j)$.

Next, consider $i'$. We have $P_{(j, \lambda_f(w)_{i'} + 1)} = j + \lambda_f(w)_{i}$ for $j < i'$ since it is in the frozen part. These entries for $j = i' - 1, \dots, 1$ will move the number $i'$ back to $\lambda_f(w)_i + 1 $. Hence $\sigma(i') = \lambda_f(w)_i + 1$. 

Finally, by the remark, the number $j$, $i' < j \leq i$, can be moved at most $j - 1$ steps to the left by $p_{j - 1}, \dots, p_1$. Hence $\sigma(j) > \lambda_f(w)_i + 1$ for $i' < j \leq i$, and the claim follows. This implies that $\lambda_f(w)_i = \lambda(\sigma)_i$ for any $1 \leq i \leq \mathrm{len}(\lambda_f(w))$.
\end{proof}

Zachary Hamaker has pointed out that the connection between frozen regions and permutation diagrams can also be understood using the theory on Hecke insertion.

Given $w = w_1 \dots w_k \in \mathcal{R}(\sigma)$, let $w^{\mathrm{rev}} = w_k \dots w_1 \in \mathcal{R}(\sigma^{-1})$. This corresponds to reflecting the wiring diagram in the vertical axis through the midpoint. Edelman and Greene proved the following lemmas.
\begin{lem}[{\cite[Corollary 7.22]{EG87}}]\label[lem]{reverse}
	Suppose $w = w_1 \dots w_k$ is a reduced word. Then
	\begin{itemize}
		\item $P(w^{\mathrm{rev}}) = P(w)^t$, where $t$ is the transpose, and
		\item $Q(w^{\mathrm{rev}}) = Q(w)^S$.
	\end{itemize}
\end{lem}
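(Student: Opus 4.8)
The plan is to prove the two identities in turn, deducing the statement about the recording tableau from the one about the insertion tableau. This parallels the classical RSK fact that reversing a word transposes the insertion tableau and transposes‑and‑evacuates the recording tableau; the Edelman--Greene setting differs only through the braid relation in Coxeter--Knuth equivalence, which turns out to cause no trouble.

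For $P(w^{\mathrm{rev}}) = P(w)^t$ I would work with \emph{Coxeter--Knuth equivalence}, the equivalence relation on reduced words generated by the local moves $b\,a\,c \leftrightarrow b\,c\,a$ and $a\,c\,b \leftrightarrow c\,a\,b$ for $a<b<c$, and $a\,(a{+}1)\,a \leftrightarrow (a{+}1)\,a\,(a{+}1)$. The key easy observation is that these moves are respected by word reversal: reversal interchanges the first two moves and fixes the third (it is a palindrome), and it also reverses the surrounding context, so $w$ Coxeter--Knuth equivalent to $w'$ implies $w^{\mathrm{rev}}$ equivalent to $(w')^{\mathrm{rev}}$. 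Since (by Edelman--Greene, with \cref{reading} and \cref{cox}) every reduced word $w$ is Coxeter--Knuth equivalent to the reading word $r(P(w))$ of its insertion tableau, and since Coxeter--Knuth moves preserve the insertion tableau, it is enough to prove the combinatorial claim that $r(P)^{\mathrm{rev}}$ is Coxeter--Knuth equivalent to $r(P^t)$ for every row‑ and column‑strict tableau $P$ with reduced reading word; granting this, $P(w^{\mathrm{rev}}) = P\bigl(r(P(w))^{\mathrm{rev}}\bigr) = P\bigl(r(P(w)^t)\bigr) = P(w)^t$ by \cref{reading} applied to $P(w)^t$. I would prove the combinatorial claim by induction on the number of rows of $P$: a single strictly increasing row reverses to a strictly decreasing word, which is literally the reading word of the corresponding one‑column tableau; in the inductive step one peels off the bottom row $\beta$ of $P$, writes $r(P)^{\mathrm{rev}} = r(P')^{\mathrm{rev}}\,\beta^{\mathrm{rev}}$ for the sub‑tableau $P'$ of the remaining rows, applies the inductive hypothesis to $P'$, and uses the Coxeter--Knuth moves (equivalently, the column‑reading‑word analogue of \cref{reading}) to slide the decreasing run $\beta^{\mathrm{rev}}$ into the position it occupies in $r(P^t)$.

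For $Q(w^{\mathrm{rev}}) = Q(w)^S$ I would invoke the promotion description of $\mathrm{EG}^{-1}$ from the theorem stated just before the lemma. Once $P(w^{\mathrm{rev}}) = P(w)^t$ is known the recording tableaux have mutually transposed shapes; for $w\in\mathcal{R}(n)$ this is the self‑conjugate staircase $\mathrm{sc}_n$, so the shapes agree. Writing $Q = Q(w)$ and letting $c_k(Q)$ be the column of the first cell of the $k$‑th evacuation path in iterating $\partial$, one has $\mathrm{EG}^{-1}(Q) = c_N(Q)\cdots c_1(Q)$, hence $w^{\mathrm{rev}} = c_1(Q)\cdots c_N(Q)$, and $\mathrm{EG}(w^{\mathrm{rev}}) = Q^S$ becomes the assertion $c_k(Q^S) = c_{N+1-k}(Q)$ for all $k$. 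This is a statement purely about Sch\"utzenberger's operators: $Q^S$ records the reverse of the order in which cells are cleared as $Q$ is repeatedly promoted, and $S$ is an involution; in particular the cell cleared last from $Q$ carries the maximal entry $N$ of $Q^S$, giving $c_1(Q^S) = c_N(Q)$, and one iterates (or appeals to the standard commutation of $S$ with $\partial$). Combined with the bijectivity in \cref{egstaircase}, this yields the identity.

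The main obstacle is the combinatorial claim of the second paragraph --- identifying the reversal‑induced operation on insertion tableaux with transposition, i.e.\ $r(P)^{\mathrm{rev}} \sim r(P^t)$ --- and, on the recording side, making the interplay between evacuation and the evacuation‑path data precise; everything else is bookkeeping with \cref{reading}, \cref{cox}, \cref{egstaircase} and the promotion formula for $\mathrm{EG}^{-1}$.
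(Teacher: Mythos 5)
The paper offers no proof of this lemma; it is imported verbatim from Edelman--Greene \cite{EG87} (their Corollary 7.22), so there is no internal argument to compare yours against. Judged on its own, your outline follows the classical Sch\"utzenberger route and the reductions are correct: Coxeter--Knuth equivalence is indeed preserved by reversal (reversal interchanges the two Knuth-type moves and fixes the braid move), and, granting the claim $r(P)^{\mathrm{rev}}\sim r(P^t)$, the identity $P(w^{\mathrm{rev}})=P(w)^t$ follows exactly as you say from \cref{reading}, \cref{cox} and the fact that Coxeter--Knuth classes are the fibres of $P$. The difficulty is that both load-bearing steps are asserted rather than proved. For the first bullet, the inductive step --- ``slide the decreasing run $\beta^{\mathrm{rev}}$ into the position it occupies in $r(P^t)$'' --- is the entire content of the statement. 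It is completable: a direct bumping computation shows that inserting $P_{m,\lambda_m},\dots,P_{m,1}$ into $(P')^t$ creates exactly the cells $(1,m),\dots,(\lambda_m,m)$ with entries $P_{m,1},\dots,P_{m,\lambda_m}$ (each letter bumps its predecessor one row down inside column $m$, never disturbing earlier columns because $P_{m,j}\ge P_{m,i}>P_{m-1,i}$), whence $P\bigl(r((P')^t)\,\beta^{\mathrm{rev}}\bigr)=P^t$ by \cref{reading}; but some such verification must actually be written, since this is where the theorem lives.

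For the second bullet you correctly reduce, via the promotion description of $\mathrm{EG}^{-1}$ and the bijectivity in \cref{egstaircase}, to the identity $c_k(Q^S)=c_{N+1-k}(Q)$, i.e.\ $\mathrm{EG}^{-1}(Q^S)=\bigl(\mathrm{EG}^{-1}(Q)\bigr)^{\mathrm{rev}}$. You verify this only for $k=1$ and then write ``one iterates (or appeals to the standard commutation of $S$ with $\partial$)''. That commutation is precisely the nontrivial Sch\"utzenberger-theoretic input; without proving it or citing it explicitly, the second identity is not established. Finally, note that $P(w^{\mathrm{rev}})=P(w)^t$ forces $Q(w^{\mathrm{rev}})$ and $Q(w)$ to have mutually conjugate shapes while $S$ preserves shape, so the second bullet can only hold for self-conjugate shapes; your restriction to $w\in\mathcal{R}(n)$ is therefore the right move (and covers every use of the lemma in this paper), but it means the proposal proves strictly less than the statement as printed, which is asserted for arbitrary reduced words.
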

A similar statement holds for taking complements. This time the wiring diagram picture would be to reflect the diagram in the horizontal axis through the middle.
\begin{lem}[{\cite[Corollary 7.21]{EG87}}]\label[lem]{complement}
	Suppose $w = w_1 \dots w_k \in \mathcal{R}(n)$ and let $\bar{w} = \bar{w}_1 \dots \bar{w}_k$, where $\bar{w}_i = n - w_i$ for $1 \leq i \leq k$. Then $\bar{w} \in \mathcal{R}(n)$, and $Q(\bar{w}) = Q(w)^t$.
\end{lem}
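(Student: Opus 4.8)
The plan is to read the claim off the promotion description of $\mathrm{EG}^{-1}$ in \cite[Theorem 7.18]{EG87}, using that $\mathrm{EG}$ is a bijection $\mathcal{R}(n) \to \mathrm{SYT}(\mathrm{sc}_n)$ (\cref{egstaircase}). Since the staircase $\mathrm{sc}_n = (n-1, n-2, \dots, 1)$ is self-conjugate, $Q^t \in \mathrm{SYT}(\mathrm{sc}_n)$ whenever $Q \in \mathrm{SYT}(\mathrm{sc}_n)$, so I may set $v := \mathrm{EG}^{-1}\big(Q(w)^t\big) \in \mathcal{R}(n)$ and aim to prove $v = \bar w$; this simultaneously gives $\bar w \in \mathcal{R}(n)$ and $Q(\bar w) = Q(w)^t$. (Independently, $\bar w \in \mathcal{R}(n)$ is immediate: $w_i \mapsto n - w_i$ is conjugation by the longest element $w_0 = n(n-1)\cdots 1$, an automorphism of the Coxeter system $(\S_n, \{s_1, \dots, s_{n-1}\})$ fixing $w_0$.)

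The one structural input I need is that the elementary promotion $\partial$ defined above is \emph{equivariant under transposition}: the evacuation path is built from $\max\{T_{i,j-1}, T_{i-1,j}\}$, which is symmetric in the two coordinates, and ``remove the largest label, then slide labels along the path'' is likewise coordinate-symmetric, so the evacuation path of $T^t$ is the transpose of the evacuation path of $T$, whence $(T^t)^{\partial} = (T^{\partial})^t$. Iterating, $(T^t)^{\partial^k} = (T^{\partial^k})^t$ for all $k$; in particular, if $\pi_1^{(k)} = (i_k, j_k)$ denotes the first cell of the $k$-th evacuation path obtained by running $\partial$ repeatedly on $Q(w)$, then the first cell of the $k$-th evacuation path obtained by running $\partial$ on $Q(w)^t$ is $(j_k, i_k)$.

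It then remains to locate the cells $\pi_1^{(k)}$: running $\partial$ from an element of $\mathrm{SYT}(\mathrm{sc}_n)$, the empty cells always form a Young diagram $\mu$ (each application of $\partial$ vacates precisely the last cell of the evacuation path, which is an addable corner of $\mu$), so the labelled region is a skew shape $\mathrm{sc}_n/\mu$, and every outer corner of $\mathrm{sc}_n/\mu$ — in particular the cell holding the largest entry — satisfies $i + j = n$. Hence $i_k = n - j_k$, and by \cite[Theorem 7.18]{EG87} the word $v$, read off from the columns of the successive evacuation paths of $Q(w)^t$, equals $(n - j_N) \cdots (n - j_1) = \overline{\mathrm{EG}^{-1}(Q(w))} = \bar w$, where $N = \binom{n}{2}$. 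The transpose-equivariance of $\partial$ is the conceptual crux; the part most likely to need care is the bookkeeping in the last step, namely that the empty region stays a Young diagram and that all outer corners of $\mathrm{sc}_n/\mu$ lie on the antidiagonal $i+j=n$, although both are routine.
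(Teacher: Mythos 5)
Your argument is correct. The paper itself offers no proof here---it simply quotes \cite[Corollary 7.21]{EG87}---so what you have written is a self-contained derivation from the only ingredient the paper does state, namely the promotion description of $\mathrm{EG}^{-1}$ in \cite[Theorem 7.18]{EG87}. The two facts carrying your proof both check out: the recursion $\pi_{k+1}=(i',j')$ with $T_{i',j'}=\max\{T_{i,j-1},T_{i-1,j}\}$ is literally symmetric under $(i,j)\mapsto(j,i)$ (and there are no ties, since the labels are distinct), giving $(T^t)^{\partial}=(T^{\partial})^t$; and since the vacated cell $\pi_s$ is by construction an addable corner of the already-empty region $\mu$, the labelled cells always form $\mathrm{sc}_n/\mu$ with $\mu$ a partition, whose outer corners are exactly the outer corners of $\mathrm{sc}_n$ not in $\mu$ and hence all satisfy $i+j=n$, forcing $i_k=n-j_k$. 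Together with the self-conjugacy of $\mathrm{sc}_n$ and the bijectivity in \cref{egstaircase}, this yields both conclusions at once, and your separate Coxeter-automorphism remark for $\bar w\in\mathcal{R}(n)$ is also fine. This is essentially the route Edelman and Greene themselves take (their Corollary 7.21 is deduced from the same promotion analysis), so nothing is gained or lost relative to the cited source; the one step you should make explicit if writing this up in full is that each slide along the evacuation path preserves row- and column-increasingness, which is the standard jeu de taquin fact guaranteeing that the next iteration's maximum again sits at an outer corner.
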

Note that if $s_{w_1} \cdots s_{w_k} = \sigma$, then $s_{\bar{w}_1} \cdots s_{\bar{w}_k} = (\sigma^r)^c = (\sigma^c)^r$, where $\sigma^r = \sigma(n) \dots \sigma(1)$, which corresponds to flipping the permutation matrix about its vertical axis, and $\sigma^c = n + 1 - \sigma(1) \dots n + 1 - \sigma(n)$, which corresponds to doing the same about the horizontal axis.

The symmetries above yield the reformulation of a part of \cite[Conjecture 2]{AHRV07} below. We state it informally. The reader is referred to \cite{AHRV07} for the details on their conjecture.
\begin{conj}[Reformulation of a consequence of {\cite[Conjecture 2]{AHRV07}}]\label[conj]{altconj}
Let $w$ be a random sorting network. For a fixed $t \in (0, 1)$, the boundary of the limit shape $(n \rightarrow \infty)$ of the scaled frozen region \[F_t = \{ (\frac{2j}{n}-1, 1-\frac{2i}{n}) \in \mathbb{R}^2: (i, j) \in \lambda_f(w_1\dots w_{\lfloor t \binom{n}{2}\rfloor})\}\] is $\{(x, y) \in \mathbb{R}^2 : x \leq -\cos(\pi t), y \geq \cos(\pi t), \sin^2(\pi t) - 2xy\cos(\pi t) - x^2 - y^2 = 0\}.$
\end{conj}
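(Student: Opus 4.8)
The plan is to deduce the statement from \cref{frozendiagram} together with the resolution of the Angel--Holroyd--Romik--Vir\'ag conjectures announced in~\cite{DV18,D18}. Write $\sigma_t^{(n)}=s_{w_1}\cdots s_{w_{\lfloor t\binom n2\rfloor}}$ for the intermediate permutation of a uniformly random $w\in\mathcal R(n)$. By \cref{frozendiagram}, the scaled frozen region $F_t$ is the image under $(i,j)\mapsto(\tfrac{2j}{n}-1,\,1-\tfrac{2i}{n})$ of the top-left component $D_{(1,1)}(\sigma_t^{(n)})$ of the Rothe diagram, so it is enough to understand the limit of that component.

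First I would record a combinatorial description of $D_{(1,1)}$: for every $\sigma\in\S_n$, one has $(i,j)\in D_{(1,1)}(\sigma)$ if and only if $M(\sigma)$ has no $1$ inside the rectangle $[1,i]\times[1,j]$, equivalently $\min\{\sigma(1),\dots,\sigma(j)\}>i$. One inclusion is immediate, since an empty such rectangle is a connected subset of $D(\sigma)$ containing $(1,1)$; for the other, a $1$ of $M(\sigma)$ inside the rectangle has its entire hook shaded, and this shaded hook separates $(i,j)$ from $(1,1)$ within $D(\sigma)$. In particular $D_{(1,1)}(\sigma)$ is a Young-diagram shape, and in the scaled picture $F_t$ is the set of points lying weakly north-west of every point of the scaled permutation matrix of $\sigma_t^{(n)}$; its boundary is the non-decreasing curve $y=Y_t^{(n)}(x)$, where $Y_t^{(n)}(x)$ is the height of the topmost $1$ of $M(\sigma_t^{(n)})$ weakly to the left of the column corresponding to $x$.

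Next I would import the limit. Under the same rescaling, \cite{DV18,D18} (settling the conjectures of~\cite{AHRV07}) give that the permutation matrix of $\sigma_t^{(n)}$ converges, as $n\to\infty$, to a deterministic permuton $\mu_t$ whose support is the region $A_t$ bounded by the conic $\sin^2(\pi t)-2xy\cos(\pi t)-x^2-y^2=0$ from the statement; for $t\in(0,1)$ this conic is an ellipse with axes along the two diagonals of the square, degenerating to the antidiagonal segment as $t\to0$, to the diagonal segment as $t\to1$, and equal to the unit circle when $t=\tfrac12$ (the ``holographic'' disk). Combining this with Step~1, the limiting frozen region consists of the points lying weakly north-west of every point of $A_t$, so its boundary is the north-west envelope of $A_t$, namely the arc of the conic from the leftmost point of $A_t$ to its topmost point. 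A short computation (setting the partial derivatives of $x^2+2xy\cos(\pi t)+y^2$ to zero) identifies the leftmost point as $(-1,\cos\pi t)$ and the topmost point as $(-\cos\pi t,1)$, so this arc is precisely the part of the conic cut out by $x\le-\cos\pi t$ and $y\ge\cos\pi t$ — the set in the statement. As checks, this is the second-quadrant quarter of the unit circle when $t=\tfrac12$, collapses to the corner $(-1,1)$ as $t\to0$, and becomes the segment $y=x$ — the scaled staircase diagram of the reverse permutation — as $t\to1$.

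The hard part is the passage from weak (permuton) convergence in Step~2 to convergence of the frozen region: since $Y_t^{(n)}$ is an extreme statistic — the topmost $1$ in a left-strip — it is not a continuous functional of the permuton, so one must show that, with probability tending to $1$, no $1$ of $M(\sigma_t^{(n)})$ lies a macroscopic distance north-west of $A_t$. I expect this to follow from the trajectory-level estimates underlying~\cite{DV18,D18} (each wire stays within $o(n)$ of its limiting sine curve), but extracting a clean quantitative version, and controlling the behaviour uniformly near the two ends of the arc — the corner $(-1,1)$ and the leftmost point $(-1,\cos\pi t)$ — is delicate. A secondary, routine point is to fix at the outset the exact sense in which $F_t$ is claimed to converge, e.g.\ Hausdorff convergence of the boundary curves or $L^1$-convergence of the indicator functions of the Young diagrams.
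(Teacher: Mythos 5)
The statement you are asked about is stated in the paper as a \emph{conjecture}, and the paper offers no proof of it: the authors only explain that \cref{frozendiagram} turns a part of \cite[Conjecture 2]{AHRV07} into this assertion about frozen regions, and they point to \cite{DV18} and \cite{D18} for announced proofs of the underlying sorting-network conjecture. Your proposal reconstructs exactly that intended reduction, and does so correctly: the characterization of the top-left component $D_{(1,1)}(\sigma)$ as the set of cells $(i,j)$ whose north-west rectangle contains no $1$ of $M(\sigma)$ is right (and is the reason \cref{frozendiagram} translates the permutation-matrix limit into a frozen-region limit), and your computation of the endpoints $(-1,\cos\pi t)$ and $(-\cos\pi t,1)$ of the north-west arc of the conic matches the constraints $x\le-\cos\pi t$, $y\ge\cos\pi t$ in the statement. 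So as a comparison with the paper there is nothing to fault: you take the same route the authors sketch in the sentence preceding the conjecture.

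What you should be clear about is that your text is not a proof and the paper does not claim one either. The genuine gap you flag at the end --- upgrading weak (permuton-type) convergence of $M(\sigma_t^{(n)})$ to control of the extremal $1$s governing the boundary of $F_t$, uniformly near the two ends of the arc --- is real, and it is precisely the reason the statement is labelled a conjecture rather than a corollary of \cref{frozendiagram}. Closing it requires the trajectory-level results of \cite{DV18,D18} (or at least local-law estimates ruling out macroscopic outliers north-west of the limiting ellipse), which lie outside this paper. In short: your reduction is the paper's reduction, your geometry is correct, and the remaining analytic step is exactly the open (at the time of writing, announced) part; no internal error, but no complete proof either --- consistent with the statement's status in the paper.
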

A proof of the corresponding part of \cite[Conjecture 2]{AHRV07} has been announced recently in~\cite{DV18}. See also a stronger version in~\cite[Theorem 2]{D18}. \cref{altconj} and \cite[Conjecture 2]{AHRV07} are illustrated in \cref{comparison}.

\begin{figure}[htbp!]
\centering
\includegraphics[scale=0.42]{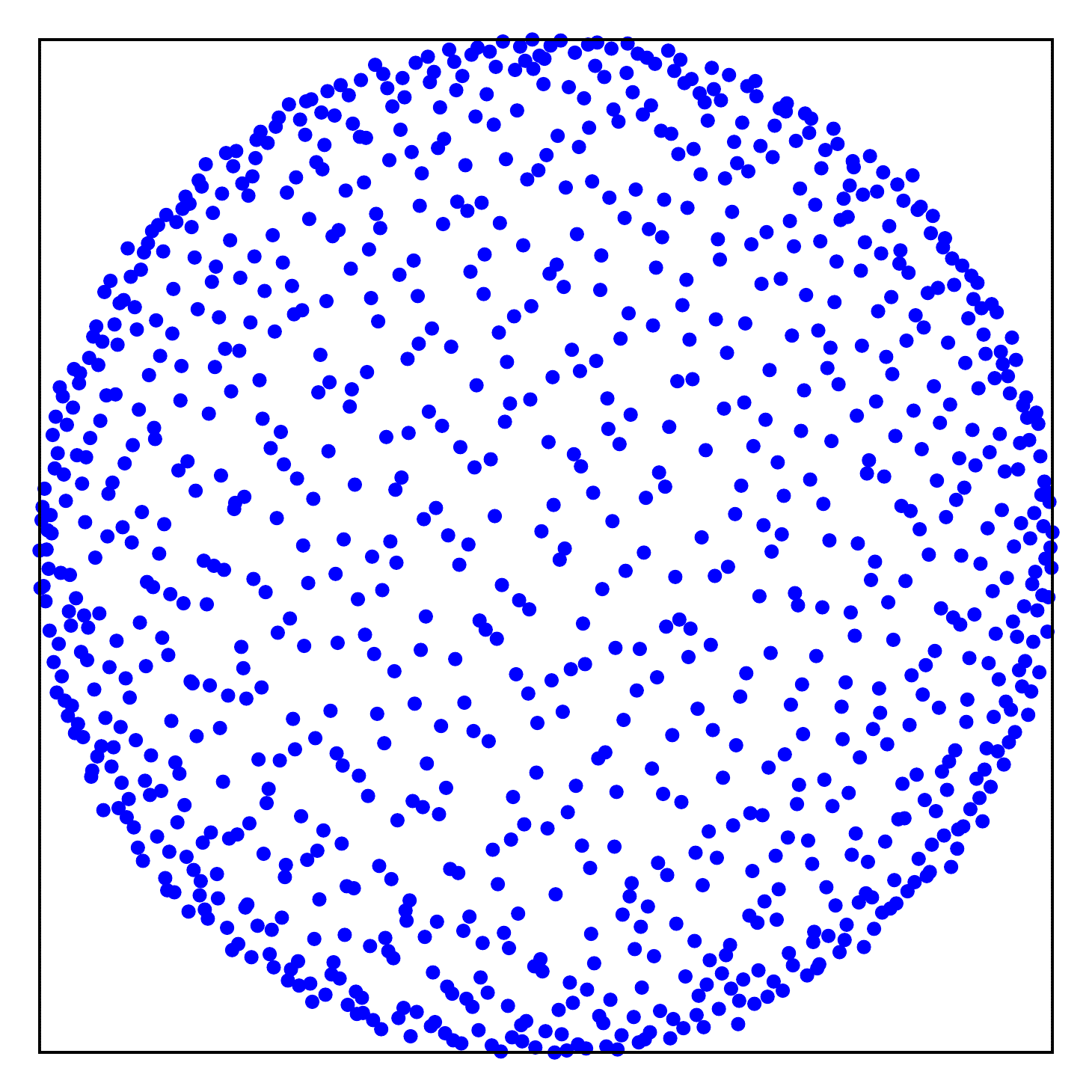}
\includegraphics[scale=0.42]{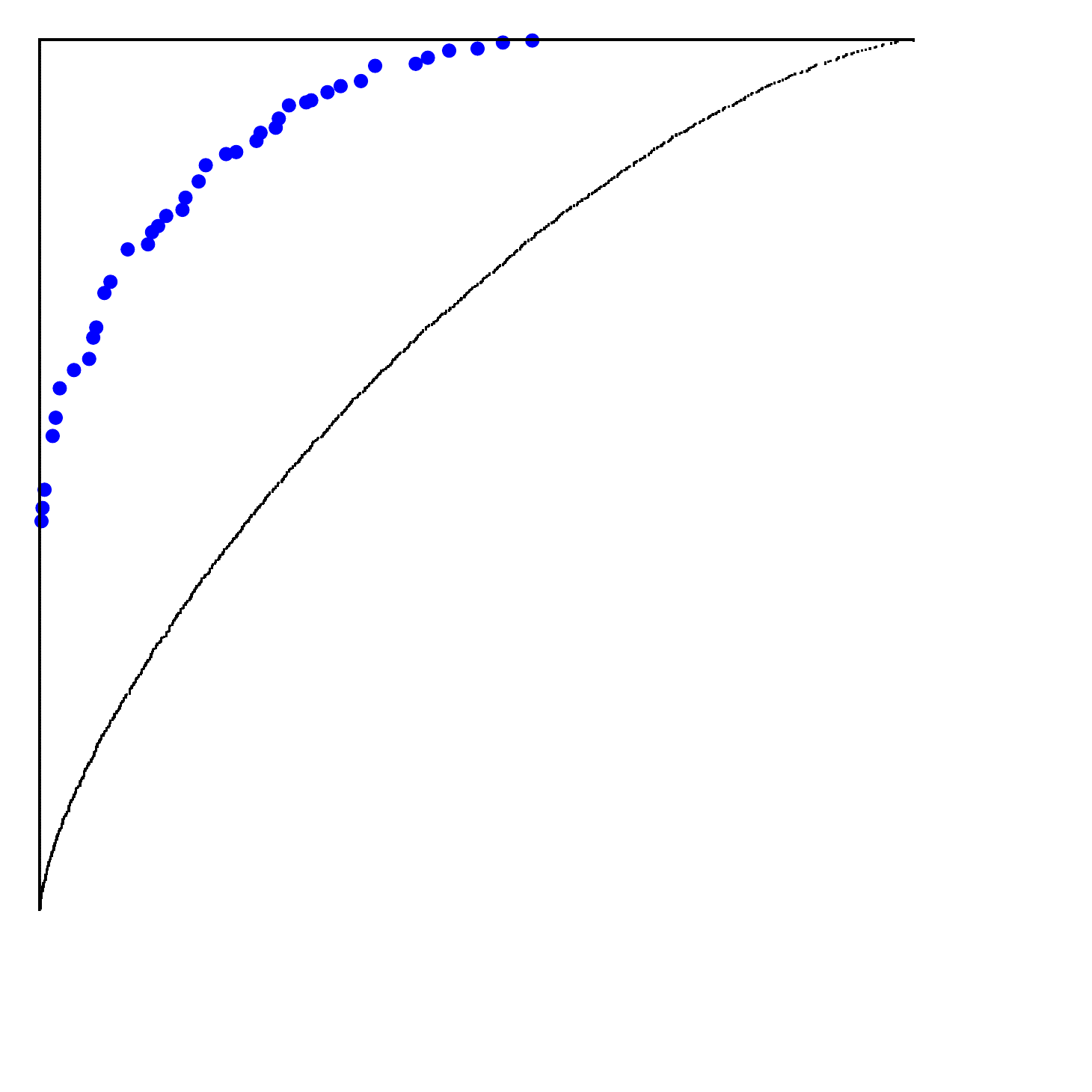}
\includegraphics[scale=0.42]{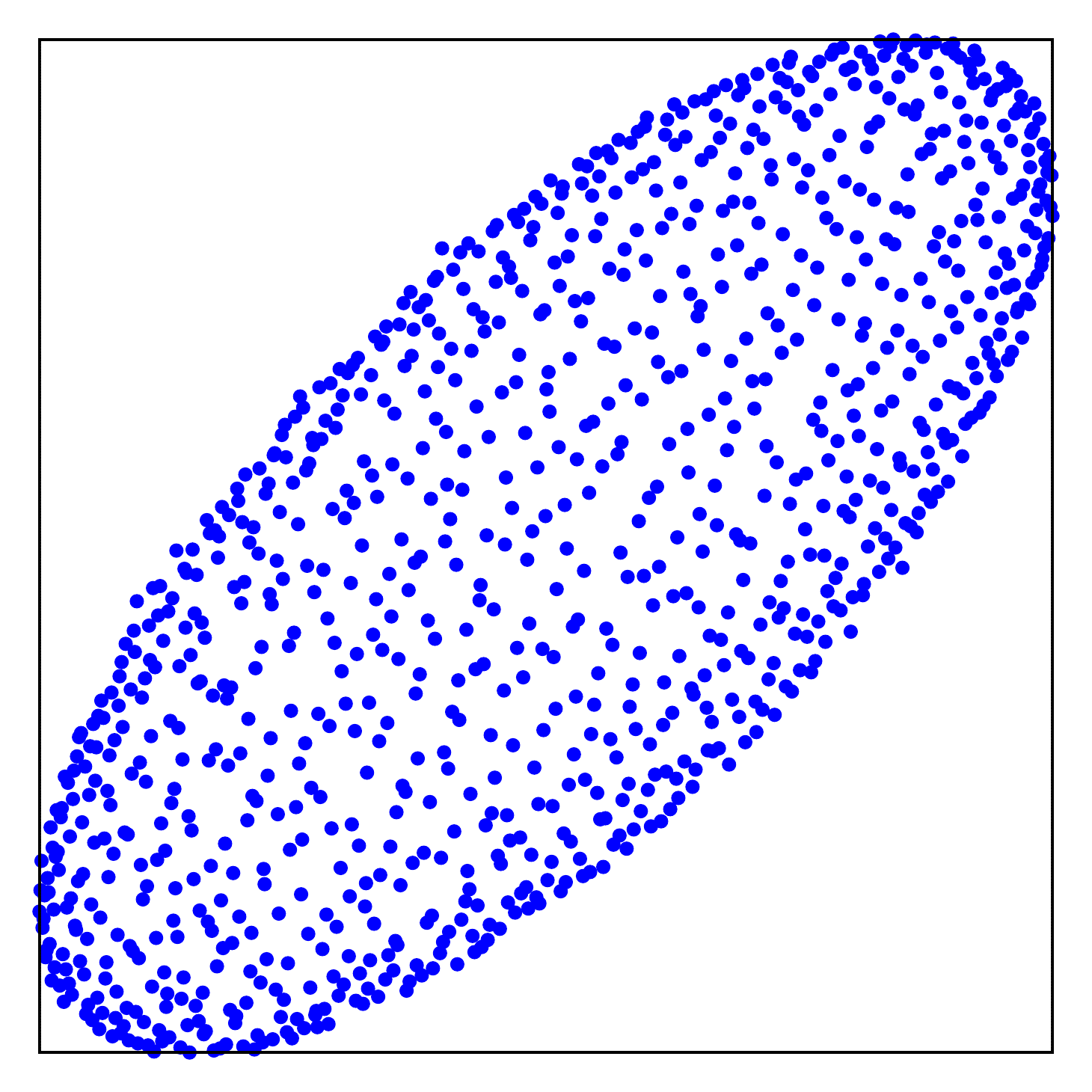}
\includegraphics[scale=0.42]{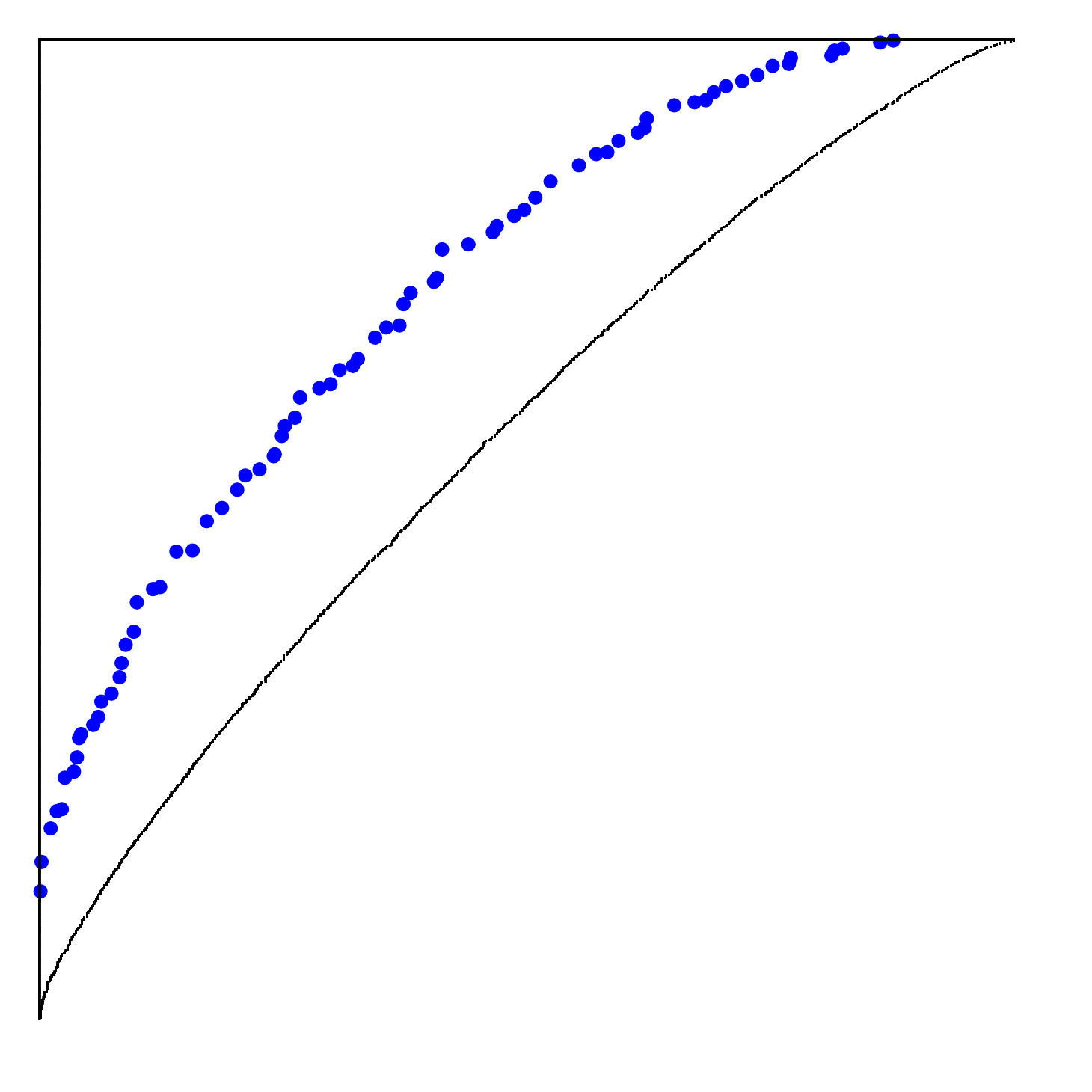}
\caption{A comparison at times $t = \frac{1}{2}$ and $\frac{3}{4}$ illustrating how the same shapes occur in both permutation matrices $M(\sigma_t)$ and frozen regions $\lambda_f(w_1\dots w_{\lfloor t \binom{n}{2} \rfloor})$, where $\sigma_t$ is the permutation defined by $w_1\dots w_{\lfloor t \binom{n}{2} \rfloor}$ for a random 1000-element sorting network $w$.}
\label[fig]{comparison}
\end{figure}

\subsection{Pattern avoidance}

\cref{frozendiagram} also connects our work with the study of pattern-avoiding permutations. The permutation $\sigma \in \S_n$ \emph{contains} the pattern $p = p_1 \dots p_k \in \mathbb{N}^*$ if there exist indices $1 \leq i_1 < i_2 \dots < i_k \leq n$ such that $\sigma(i) < \sigma(j)$ if and only if $p_i < p_j$ for all $i < j$, $i, j \in \{i_1, \dots, i_k\}$. If $\sigma$ does not contain $p$, it is called \emph{$p$-avoiding}. The set of 132-avoiding permutations of $[n]$, $\S_n(132)$, is of particular interest here. The reason is an observation of Fulton.

\begin{lem}[{\cite[Proposition 9.19]{Fu92}}]\label[lem]{fulton132}
	Let $\sigma \in \S_n$. Then $\sigma$ is 132-avoiding if and only if $D(\sigma) = D_{(1,1)}(\sigma)$.
\end{lem}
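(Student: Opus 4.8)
The plan is to prove both implications simultaneously by first describing, for an arbitrary $\sigma\in\S_n$, exactly which cells make up the top-left component $D_{(1,1)}(\sigma)$, and only afterwards extracting the pattern condition. I proceed this way because one cannot simply argue ``$D(\sigma)$ is connected and contains $(1,1)$, so it is the diagram of a partition'': that implication fails for arbitrary finite subsets of the grid, so the specific structure of a Rothe diagram has to be used.

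Recall that, in the conventions of the paper, $(i,j)\in D(\sigma)$ precisely when $j<\sigma^{-1}(i)$ and $i<\sigma(j)$; equivalently, column $j$ of $D(\sigma)$ is the set of values $i<\sigma(j)$ sitting at a position strictly to the right of $j$. For $i\ge 1$ set $\ell_i:=\#\{t\ge 1:\sigma(1),\dots,\sigma(t)\text{ are all }>i\}$, the length of the longest prefix of $\sigma$ whose minimum exceeds $i$. Since the defining condition is monotone in $t$ and in $i$, the sequence $\ell_1\ge\ell_2\ge\cdots$ is a partition; let $Y$ denote its Young diagram, $Y=\{(i,j):1\le j\le\ell_i\}$. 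One checks at once that $Y\subseteq D(\sigma)$ (if $j\le\ell_i$ then $\sigma(j)>i$ and the value $i$ occurs at a position $>\ell_i\ge j$, so $\sigma^{-1}(i)>j$), and that $Y$, being a Young diagram, is connected and contains $(1,1)$ whenever it is nonempty, so $Y\subseteq D_{(1,1)}(\sigma)$. The crux is the reverse inclusion: I would show that no cell of $D(\sigma)$ outside $Y$ is grid-adjacent to $Y$, which forces $Y$ to be a union of connected components of $D(\sigma)$ and hence $Y=D_{(1,1)}(\sigma)$. Concretely, take $(i,j)\in D(\sigma)$ with $j>\ell_i$ and rule out each of its four neighbours lying in $Y$: the right and bottom neighbours are excluded by $j>\ell_i\ge\ell_{i+1}$; a left neighbour in $Y$ forces $j=\ell_i+1$, but then $\sigma(j)>i$ would extend the prefix, contradicting maximality of $\ell_i$; a top neighbour in $Y$ gives $\sigma(1),\dots,\sigma(j)\ge i$, whereas $j>\ell_i$ yields some $t\le j$ with $\sigma(t)\le i$, forcing $\sigma(t)=i$ and so $\sigma^{-1}(i)\le j$, contradicting $(i,j)\in D(\sigma)$.

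With $D_{(1,1)}(\sigma)=Y$ in hand, each implication is one line. If $\sigma$ avoids $132$, then $D(\sigma)\subseteq Y$: for $(i,j)\in D(\sigma)$, any $t\le j$ with $\sigma(t)\le i$ would satisfy $\sigma(t)\ne i$ (since $\sigma^{-1}(i)>j\ge t$) and $t<j$ (since $\sigma(j)>i$), so the positions $t<j<\sigma^{-1}(i)$ would carry the values $\sigma(t)<i<\sigma(j)$ with the medium one at the rightmost position, i.e.\ a $132$-pattern; hence $\sigma(1),\dots,\sigma(j)$ all exceed $i$ and $j\le\ell_i$. Together with $Y\subseteq D_{(1,1)}(\sigma)\subseteq D(\sigma)$ this gives $D(\sigma)=D_{(1,1)}(\sigma)$. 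Conversely, if $\sigma$ contains a $132$-pattern at positions $a<b<c$ with $\sigma(a)<\sigma(c)<\sigma(b)$, then $(\sigma(c),b)\in D(\sigma)$ (as $\sigma(b)>\sigma(c)$ and $\sigma^{-1}(\sigma(c))=c>b$), while $(\sigma(c),b)\notin Y=D_{(1,1)}(\sigma)$ since $\sigma(a)$ is one of $\sigma(1),\dots,\sigma(b)$ and $\sigma(a)<\sigma(c)$, so $\ell_{\sigma(c)}<b$; therefore $D(\sigma)\ne D_{(1,1)}(\sigma)$.

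The step I expect to be the real work is proving $D_{(1,1)}(\sigma)=Y$ --- in particular that $Y$ is ``closed'' inside $D(\sigma)$ --- since this is where the definition of the Rothe diagram genuinely enters and where the corresponding statement for general diagrams fails; the two pattern arguments that follow are routine. The degenerate cases need no separate treatment: if $\sigma(1)=1$ then $\ell_i=0$ for every $i$, so $Y=\emptyset=D_{(1,1)}(\sigma)$, consistent with $\sigma$ being $132$-avoiding only for $\sigma=\mathrm{id}$, where $D(\sigma)=\emptyset$ as well.
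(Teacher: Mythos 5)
The paper itself gives no proof of this lemma---it is quoted directly from Fulton \cite[Proposition 9.19]{Fu92}---so there is no in-paper argument to compare against; I have therefore checked your proof on its own terms, and it is correct and complete under the paper's conventions ($1$s at $(\sigma(i),i)$, hook-shading, so $(i,j)\in D(\sigma)$ iff $\sigma^{-1}(i)>j$ and $\sigma(j)>i$). Your strategy of first identifying $D_{(1,1)}(\sigma)$ explicitly as the Young diagram $Y$ of the partition $\ell_i=\max\{t:\min(\sigma(1),\dots,\sigma(t))>i\}$ is sound: the inclusion $Y\subseteq D(\sigma)$ is immediate, and your four-neighbour check correctly shows $Y$ is a union of components of $D(\sigma)$ (the left-neighbour case uses maximality of $\ell_i$, the top-neighbour case uses $\sigma^{-1}(i)>j$, and the right/bottom cases use $\ell_{i+1}\le\ell_i$), which together with connectedness of $Y$ and $(1,1)\in Y$ whenever $(1,1)\in D(\sigma)$ gives $D_{(1,1)}(\sigma)=Y$. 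The two pattern arguments are also correct: a cell of $D(\sigma)$ outside $Y$ produces positions $t<j<\sigma^{-1}(i)$ carrying values $\sigma(t)<i<\sigma(j)$, i.e.\ a $132$-pattern, and conversely a $132$-pattern at $a<b<c$ yields the cell $(\sigma(c),b)\in D(\sigma)\setminus Y$. The degenerate case $\sigma(1)=1$ is handled consistently. This is essentially Fulton's own characterization of dominant permutations, reproved from scratch; no gaps.
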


Since the length of a reduced word of $\sigma \in \S_n$ is exactly the number of inversions in $\sigma$, that is $\mathrm{inv}(\sigma)$, \cref{fulton132} suggests we also need the following well-known fact: If $\sigma \in \S_n$, then $|D(\sigma)| = \mathrm{inv}(\sigma)$. Note that by \cref{fulton132}, this can also be stated as $\lambda(\sigma) \vdash \mathrm{inv}(\sigma)$ for $\sigma \in \S_n(132)$, meaning that $\lambda(\sigma)$ is a partition of $\mathrm{inv}(\sigma)$. We then obtain the characterization below.
\begin{cor}\label[cor]{frozenp}
	Let $w \in \mathcal{R}(\sigma)$. The insertion tableau $P(w)$ is frozen if and only if $\sigma$ is 132-avoiding. 
\end{cor}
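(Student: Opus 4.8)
The plan is to reduce the statement to a count of cells. First I would unwind the definition: $P(w)$ is frozen exactly when its frozen region is the whole tableau, i.e.\ when the partition $\lambda_f(w)$ induced by the frozen region coincides with the shape of $P(w)$. Since each Edelman--Greene insertion step appends exactly one new box (the procedure terminates by placing $x_i$ at the end of a, possibly empty, row), the shape of $P(w)=P^{(m)}$ has $m=\mathrm{len}(w)$ cells; and because $w\in\mathcal{R}(\sigma)$ is reduced, $\mathrm{len}(w)=\mathrm{inv}(\sigma)$. Hence $P(w)$ is frozen if and only if $|\lambda_f(w)|=\mathrm{inv}(\sigma)$.

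Next I would invoke \cref{frozendiagram}, which gives $\lambda_f(w)=\lambda(\sigma)$, the partition induced by the top-left component $D_{(1,1)}(\sigma)$ of the Rothe diagram, so that the condition becomes $|D_{(1,1)}(\sigma)|=\mathrm{inv}(\sigma)$. Using the well-known identity $|D(\sigma)|=\mathrm{inv}(\sigma)$ recalled above, together with the inclusion $D_{(1,1)}(\sigma)\subseteq D(\sigma)$, this is equivalent to $D_{(1,1)}(\sigma)=D(\sigma)$, since a subset of a finite set equals that set precisely when it has the same cardinality. Finally, \cref{fulton132} says $D(\sigma)=D_{(1,1)}(\sigma)$ exactly when $\sigma$ is $132$-avoiding, which closes the argument.

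The only points needing a sentence of justification are that the frozen region really is an upper-left-justified Young diagram (so that having the same number of cells as the shape of $P(w)$ forces it to be all of that shape) and that every insertion creates exactly one box; both are already built into the setup, the former because any increasing tableau satisfies $P_{i,j}\ge i+j-1$, whence a cell with $P_{i,j}=i+j-1$ forces its northern and western neighbours to be frozen as well, and the latter being immediate from the description of the insertion. Consequently I do not expect a genuine obstacle here: the corollary is carried entirely by \cref{frozendiagram} and \cref{fulton132}, and the work is just the bookkeeping that equality of cardinalities upgrades to equality of shapes.
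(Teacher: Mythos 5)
Your argument is correct and is exactly the derivation the paper intends: the corollary is stated without a written proof, but the preceding discussion (\cref{frozendiagram}, \cref{fulton132}, and the fact $|D(\sigma)|=\mathrm{inv}(\sigma)$) supplies precisely the chain of equivalences you wrote out, including the cardinality bookkeeping.
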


Somewhat related, Tenner showed in \cite[Theorem 5.15]{Te17} that the set of 132-avoiding permutations of any length with $k$ inversions is in bijection with partitions of $k$. The proof is by constructing a bijection by filling the Young diagram of $\lambda \vdash k$ in such a way that the result is a frozen tableau (it is called antidiagonal filling in the paper). Then the reading words of these tableaux are shown to be reduced and moreover to yield the 132-avoiding permutations. This would then imply the ``only if''-direction of \cref{frozenp} by \cref{cox}.

The corollary below is mostly a reproof of consequences of results by Stanley \cite[Theorem 4.1]{St84}, and Edelman and Greene \cite[Theorem 8.1]{EG87}. We have added the observation that each shape $\lambda \subset \mathrm{sc}_n$ appears for exactly one $\sigma \in \S_n(132)$ (and the consequent second bijection), which also follows from their works by properties of 132-avoiding permutations but is not discussed.
\begin{cor}\label[cor]{132av}
	If $\sigma$ is 132-avoiding, then $P(w)$ is frozen and has the same shape $\lambda(\sigma)$ for all $w \in \mathcal{R}(\sigma)$. Furthermore, each shape $\lambda \subset \mathrm{sc}_n$ appears for exactly one $\sigma \in \S_n(132)$. Hence, $\mathrm{EG}(w): w \mapsto Q(w)$ defines a bijection \[\mathcal{R}(\sigma) \rightarrow \mathrm{SYT}(\lambda(\sigma)),\] and a bijection \[\bigcup\limits_{\sigma \in \S_n(132)}\mathcal{R}(\sigma) \rightarrow \bigcup\limits_{\lambda \subset \mathrm{sc}_n} \mathrm{SYT}(\lambda).\]
\end{cor}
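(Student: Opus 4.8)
The plan is to derive everything from \cref{frozendiagram}, \cref{frozenp}, \cref{fulton132}, \cref{egmain}, and \cref{cox}, together with the classical fact that a permutation is recovered from its Rothe diagram. The first step is to observe that for $\sigma \in \S_n(132)$ the insertion tableau $P(w)$ does not depend on $w \in \mathcal{R}(\sigma)$ at all: by \cref{frozenp} it is frozen, and a frozen tableau satisfies $P(w)_{i,j}=i+j-1$, so it is completely determined by its shape; by \cref{frozendiagram} that shape is $\lambda_f(w)=\lambda(\sigma)$, which is independent of $w$. Write $P_\sigma$ for this common frozen tableau of shape $\lambda(\sigma)$. In particular $Q(w)\in\mathrm{SYT}(\lambda(\sigma))$ for every $w\in\mathcal{R}(\sigma)$, so $\mathrm{EG}$ does send $\mathcal{R}(\sigma)$ into $\mathrm{SYT}(\lambda(\sigma))$.

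Next I would prove that $\sigma\mapsto\lambda(\sigma)$ is a bijection from $\S_n(132)$ onto $\{\lambda:\lambda\subseteq\mathrm{sc}_n\}$. By \cref{fulton132}, for $132$-avoiding $\sigma$ the diagram $D(\sigma)=D_{(1,1)}(\sigma)$ is the Young diagram of $\lambda(\sigma)$; since row $i$ of a Rothe diagram contains at most $\#\{j:\sigma(j)>i\}=n-i$ cells, we get $\lambda(\sigma)\subseteq\mathrm{sc}_n$, so the map is well defined. Injectivity is immediate, as two $132$-avoiding permutations with the same $\lambda$ have the same Rothe diagram and hence coincide. For surjectivity one can either exhibit the dominant permutation realizing a prescribed $\lambda\subseteq\mathrm{sc}_n$, or simply invoke that both $\S_n(132)$ and $\{\lambda:\lambda\subseteq\mathrm{sc}_n\}$ are enumerated by the Catalan number $C_n$, so an injection between them is automatically a bijection. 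This is the only step with content beyond bookkeeping; the mild subtlety is precisely the verification that $\lambda(\sigma)\subseteq\mathrm{sc}_n$, which is what makes the Catalan count (or the dominant-permutation construction) close the argument.

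Finally I would assemble the bijections. Fix $\sigma\in\S_n(132)$. The map $w\mapsto Q(w)$ on $\mathcal{R}(\sigma)$ is injective because $w\mapsto(P(w),Q(w))=(P_\sigma,Q(w))$ is injective by \cref{egmain}. It is surjective because, given $Q\in\mathrm{SYT}(\lambda(\sigma))$, the pair $(P_\sigma,Q)$ satisfies the hypotheses of \cref{egmain}: $P_\sigma$ is row and column strict, $r(P_\sigma)$ is reduced (indeed $P_\sigma=P(w)$ for any $w\in\mathcal{R}(\sigma)$, so $r(P_\sigma)\in\mathcal{R}(\sigma)$ by \cref{cox}), and $Q$ is standard of the same shape; hence there is a word $w'$ with $P(w')=P_\sigma$ and $Q(w')=Q$, and since $r(P(w'))=r(P_\sigma)\in\mathcal{R}(\sigma)$ pins down the permutation of $w'$ by \cref{cox}, we conclude $w'\in\mathcal{R}(\sigma)$. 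This gives the bijection $\mathrm{EG}\colon\mathcal{R}(\sigma)\to\mathrm{SYT}(\lambda(\sigma))$. Taking the disjoint union over $\sigma\in\S_n(132)$ and using that $\sigma\mapsto\lambda(\sigma)$ is a bijection onto $\{\lambda\subseteq\mathrm{sc}_n\}$ (so the pieces $\mathrm{SYT}(\lambda(\sigma))$ are exactly the pieces $\mathrm{SYT}(\lambda)$, each occurring once) yields the second claimed bijection.
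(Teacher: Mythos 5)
Your proposal is correct and follows essentially the same route the paper intends: combine \cref{frozenp} and \cref{frozendiagram} to get the common frozen tableau of shape $\lambda(\sigma)$, use \cref{fulton132} plus the Catalan count (or the recoverability of $\sigma$ from its diagram) for the shape correspondence, and invoke \cref{egmain} together with \cref{cox} for bijectivity. The paper leaves these steps implicit, citing prior results of Stanley and Edelman--Greene, so your write-up simply fills in the details of the same argument.
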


\begin{cor}\label[cor]{132f}
	Let $f^{\lambda} = |\mathrm{SYT}(\lambda)|$. Then
	\[\left\vert{\bigcup\limits_{\sigma \in \S_n(p)}\mathcal{R}(\sigma)}\right\vert = \sum\limits_{\lambda \subset \mathrm{sc}_n} f^{\lambda},\] where $p \in \{132, 213\}.$
\end{cor}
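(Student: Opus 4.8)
The plan is to handle the two patterns separately. For $p = 132$ the statement is immediate from \cref{132av}: since $\mathrm{EG}$ restricts to a bijection $\bigcup_{\sigma \in \S_n(132)} \mathcal{R}(\sigma) \to \bigcup_{\lambda \subset \mathrm{sc}_n} \mathrm{SYT}(\lambda)$, comparing cardinalities gives $\bigl|\bigcup_{\sigma \in \S_n(132)} \mathcal{R}(\sigma)\bigr| = \sum_{\lambda \subset \mathrm{sc}_n} |\mathrm{SYT}(\lambda)| = \sum_{\lambda \subset \mathrm{sc}_n} f^{\lambda}$. So the real work is to reduce the case $p = 213$ to the case $p = 132$.

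For that, I would use the word complement $w = w_1 \dots w_k \mapsto \bar{w} = \bar{w}_1 \dots \bar{w}_k$ with $\bar{w}_i = n - w_i$, which already appears in \cref{complement} and the remark following it. There are three points to check. First, $w \mapsto \bar{w}$ is an involution on words over the alphabet $[n-1]$, hence a bijection. Second, by the remark after \cref{complement}, if $s_{w_1} \cdots s_{w_k} = \sigma$ then $s_{\bar{w}_1} \cdots s_{\bar{w}_k} = (\sigma^r)^c$; since reverse-complement preserves the number of inversions, $\bar{w}$ has length $\mathrm{inv}((\sigma^r)^c)$, so $w \in \mathcal{R}(\sigma)$ if and only if $\bar{w} \in \mathcal{R}((\sigma^r)^c)$. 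Third, the map $\sigma \mapsto (\sigma^r)^c$ is an involution on $\S_n$ that interchanges $132$-avoiding and $213$-avoiding permutations: $\sigma$ contains a pattern $q$ if and only if $(\sigma^r)^c$ contains $(q^r)^c$, and $(132^r)^c = (231)^c = 213$. Combining these, $w \mapsto \bar{w}$ restricts to a bijection $\bigcup_{\sigma \in \S_n(132)} \mathcal{R}(\sigma) \to \bigcup_{\tau \in \S_n(213)} \mathcal{R}(\tau)$, so the two unions have the same cardinality and the $p = 213$ case follows from the $p = 132$ case.

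I do not expect a genuine obstacle: the argument is a bookkeeping exercise in the dihedral symmetries of permutation matrices and of pattern containment. The only point requiring care is keeping the conventions consistent — in particular, that under the paper's convention of composing the $s_{w_i}$ from the left and acting on positions, complementing the letters of a word corresponds to applying the reverse-complement to the permutation (the content of the remark after \cref{complement}) — and then recognising that this is exactly the symmetry swapping the patterns $132$ and $213$. One could equivalently carry out the reduction purely at the level of permutations via $|\mathcal{R}(\sigma)| = |\mathcal{R}((\sigma^r)^c)|$ summed over $\sigma \in \S_n(132)$, but the word-level involution has the advantage of making the bijection explicit.
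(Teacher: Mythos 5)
Your proposal is correct and is essentially the paper's own argument: the paper disposes of this corollary in one sentence, citing \cref{132av} for the $p=132$ case and the discussion after \cref{complement} (the letter-complement $w \mapsto \bar{w}$ inducing $\sigma \mapsto (\sigma^r)^c$, which swaps $132$- and $213$-avoidance) for the reduction of $p=213$ to $p=132$. You have simply filled in the routine verifications the authors left implicit.
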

This is implied by \cref{132av} and the discussion after~\cref{complement}. However, we have not been able to simplify the sum on the right-hand side.

\subsection{132-avoiding sorting networks}
Having in mind that the insertion tableau $P(w)$ becomes frozen for any reduced word $w$ of the reverse permutation, it could be interesting to restrict to \emph{132-avoiding sorting networks}, that is, those reduced words $w = w_1 \dots w_{\binom{n}{2}} \in \mathcal{R}(n)$ such that for any $1 \leq i \leq \binom{n}{2}$ the permutation $s_{w_1} \cdots s_{w_i}$ is 132-avoiding, or, equivalently, $P(w_1 \dots w_i)$ is frozen. This corresponds to considering the maximum length chains in the weak Bruhat order on $\S_n$ restricted to 132-avoiding permutations. Bj\"orner and Wachs showed in \cite{BW97} that the restriction yields a sublattice isomorphic to the \emph{Tamari lattice} $\mathcal{T}_n$.

Using results from the next section, we can characterize 132-avoiding sorting networks in terms of \emph{shifted standard Young tableaux}, which was first proved by Fishel and Nelson~\cite[Theorem 4.6]{FN14}. These are standard Young tableaux for which each row $i$ can be shifted $(i-1)$ steps to the right without breaking the rule that the columns are increasing downwards. For example, \[\ytableausetup{centertableaux} \begin{ytableau}
\scriptstyle 1 & \scriptstyle 2 & \scriptstyle 4\\
\scriptstyle 3 & \scriptstyle 5 \\
\scriptstyle 6 
\end{ytableau} \longrightarrow\ \begin{ytableau}
\scriptstyle 1 & \scriptstyle 2 & \scriptstyle 4\\
\none & \scriptstyle 3 & \scriptstyle 5 \\
\none & \none & \scriptstyle 6 
\end{ytableau}.\]
\begin{prop}{\cite[Theorem 4.6]{FN14}}\label[prop]{132sort}
	Let $w = w_1 \dots w_{\binom{n}{2}}$ be a sorting network.\\ It is 132-avoiding if and only if $Q_{i, j} > Q_{i-1, j+1}$ for all $(i, j), (i-1, j + 1) \in Q$, or in other words, $Q$ is a shifted standard Young tableau of the shape $\mathrm{sc}_n$, where $Q = \mathrm{EG}(w)$.\\	
	It is 213-avoiding if and only if $Q_{i, j} < Q_{i-1, j+1}$ for all $(i, j), (i-1, j + 1) \in Q$ where $Q = \mathrm{EG}(w)$.
\end{prop}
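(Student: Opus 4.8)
The plan is to deduce both equivalences from the single statement that a sorting network $w$ is $132$-avoiding if and only if $Q=\mathrm{EG}(w)$ is a shifted standard Young tableau, and then pass to the $213$ case via the complementation symmetry of \cref{complement}.

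For the $132$ case I would first reduce to a statement about a ``column reading word''. By \cref{frozenp}, $w=w_1\cdots w_{\binom n2}$ is a $132$-avoiding sorting network exactly when every insertion tableau $P^{(k)}:=P(w_1\cdots w_k)$ is frozen, so I would analyse one step of Edelman--Greene insertion into a frozen tableau. Writing $F_\mu$ for the (unique) frozen tableau of partition shape $\mu$, so $(F_\mu)_{a,b}=a+b-1$, a short induction along the bumping path shows that inserting a letter $b$ into $F_\mu$ always creates a new cell in row $a^\ast:=\min\{a:\mu_a<b\}$ and column $\mu_{a^\ast}+1$, and that the result is frozen if and only if $\mu$ has an addable corner in column $b$ (in which case that cell is it). Combined with the previous sentence this gives: if all $P^{(k)}$ are frozen then each $w_k$ is forced to equal the column index $j_k$ of the cell of $Q$ containing $k$; conversely, for any $Q\in\mathrm{SYT}(\mathrm{sc}_n)$ the ``column reading word'' $w':=j_1j_2\cdots j_{\binom n2}$ (where $Q_{i_k,j_k}=k$) inserts through frozen tableaux only and satisfies $\mathrm{EG}(w')=Q$. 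It follows that $w:=\mathrm{EG}^{-1}(Q)$ is $132$-avoiding if and only if $w=w'$; and since $w'$ has length $\binom n2$ and $\mathrm{EG}(w')=Q$, this happens exactly when $w'$ is reduced (a reduced word of length $\binom n2$ in $\S_n$ being necessarily a sorting network).

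It then remains to prove that $w'$ is reduced if and only if $Q$ is shifted, which I would do by induction on $k$. So long as $w'_1\cdots w'_{k-1}$ is reduced, $P(w'_1\cdots w'_{k-1})=F_{\mu^{(k-1)}}$ is entirely frozen, so by \cref{frozenp}, \cref{fulton132} and \cref{frozendiagram} the permutation $\tau_{k-1}:=s_{w'_1}\cdots s_{w'_{k-1}}$ is $132$-avoiding and $D(\tau_{k-1})$ is the Young diagram of $\mu^{(k-1)}$. Appending $w'_k=j_k$ multiplies $\tau_{k-1}$ on the right by $s_{j_k}$. Using the description $D(\sigma)=\{(r,c):c<\sigma^{-1}(r),\ r<\sigma(c)\}$ together with the fact that $(i_k,j_k)$ is an addable corner of $D(\tau_{k-1})$, a few lines give $\tau_{k-1}(j_k)=i_k$. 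The crucial ingredient is then a diagram lemma: if $\tau$ is $132$-avoiding, $D(\tau)$ is a Young diagram, and $c$ is an index with $(\tau(c),c)$ an addable corner of $D(\tau)$, then $\tau s_c$ is longer than $\tau$ if and only if $(\tau(c)-1,c+1)\in D(\tau)$ or $\tau(c)=1$, and in that case $D(\tau s_c)=D(\tau)\cup\{(\tau(c),c)\}$, again a Young diagram, so $\tau s_c$ is again $132$-avoiding. Specialising to $\tau_{k-1}$ and $c=j_k$, this says that $w'_1\cdots w'_k$ is reduced if and only if $(i_k-1,j_k+1)\in\mu^{(k-1)}$ or $i_k=1$ -- that is, if and only if the cell $(i_k-1,j_k+1)$ appears before $(i_k,j_k)$ in $Q$, i.e.\ $Q_{i_k,j_k}>Q_{i_k-1,j_k+1}$ -- and then $D(\tau_k)=\mu^{(k)}$, so the induction hypothesis is restored. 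Running over all $k$, $w'$ is reduced if and only if $Q_{i,j}>Q_{i-1,j+1}$ for all $(i,j),(i-1,j+1)\in Q$, i.e.\ $Q$ is a shifted standard Young tableau of shape $\mathrm{sc}_n$; together with the preceding paragraph this proves the first equivalence.

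For the $213$ statement I would argue by symmetry: a permutation avoids $213$ if and only if its reverse--complement avoids $132$, and by the remark after \cref{complement} one has $s_{\bar w_1}\cdots s_{\bar w_k}=(s_{w_1}\cdots s_{w_k})^{rc}$, so $w$ is a $213$-avoiding sorting network if and only if $\bar w$ is a $132$-avoiding sorting network. By \cref{complement}, $\bar w\in\mathcal R(n)$ and $Q(\bar w)=Q(w)^t$, and $\mathrm{sc}_n$ is self-transpose, so applying the first equivalence to $\bar w$ and rewriting $(Q^t)_{a,b}=Q_{b,a}$ with $i=b+1$, $j=a-1$ turns the shiftedness of $Q(w)^t$ into the condition $Q_{i,j}<Q_{i-1,j+1}$ for all $(i,j),(i-1,j+1)\in Q$. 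The step I expect to be the main obstacle is the diagram lemma of the third paragraph -- identifying precisely which cell of the Rothe diagram is created, and that none is destroyed, in columns $c$ and $c+1$ when a $132$-avoiding (dominant) permutation is right-multiplied by $s_c$. The point that makes it manageable is that in a $132$-avoiding permutation every value strictly between $\tau(c)$ and $\tau(c+1)$ must occur at a position strictly to the left of $c$, which controls the ``east'' shading of the two affected columns and forces the net change to be the single new cell $(\tau(c),c)$; the remaining ingredients -- the single-letter frozen-insertion analysis and the two reductions -- are routine.
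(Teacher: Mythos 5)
Your proposal is correct, and its first half coincides with the paper's: the reduction ``$w$ is a $132$-avoiding sorting network $\iff$ all $P^{(k)}$ are frozen $\iff$ $w$ equals the column word $c(Q)$'' is exactly the bumping analysis in the paper's proof (together with what becomes \cref{topbottom}), and the $213$ case is dispatched by the same complementation symmetry. Where you genuinely diverge is in the remaining equivalence ``$c(Q)$ is reduced $\iff$ $Q$ is shifted.'' The paper gets this by forward-referencing \cref{zeroheight}, i.e.\ by a jeu-de-taquin argument on the $Q$-side: analysing evacuation paths to show that $\mathrm{EG}^{-1}(Q)=c(Q)$ (no right slides) precisely when the anti-diagonals of $Q$ increase downwards. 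You instead work on the permutation side: tracking the Rothe diagram of the prefix permutations $\tau_k$ via \cref{frozendiagram} and \cref{fulton132}, and proving a local lemma that right-multiplying a dominant permutation by $s_{j_k}$ at an addable corner $(i_k,j_k)$ of its diagram increases length exactly when $(i_k-1,j_k+1)$ is already in the diagram or $i_k=1$ --- which, cell by cell, is the shiftedness condition on $Q$. I checked the lemma and it is sound (the ``iff'' for the length increase needs only that $(i_k-1,j_k)\in D(\tau_{k-1})$, while $132$-avoidance is what forces $D(\tau_{k-1}s_{j_k})=D(\tau_{k-1})\cup\{(i_k,j_k)\}$, since every value strictly between $\tau(j_k)$ and $\tau(j_k+1)$ must sit left of column $j_k$). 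The trade-off: the paper's route reuses the $\mathcal{P}_Q$ machinery of the non-reduced-words section and hands you \cref{132lattice} and the enumeration as corollaries, whereas yours is self-contained within the pattern-avoidance section, avoids the forward reference, and makes the connection to the main theorem on frozen regions and diagrams explicit by growing $D(\tau_k)$ one cell at a time along $Q$.
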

\begin{proof} Suppose $Q_{i, j} > Q_{i-1, j+1}$ for all $(i, j), (i-1, j + 1) \in Q$. We shall see in \cref{zeroheight} that then $w = c_1 c_2 \dots c_{\binom{n}{2}}$ where $c_i$ is the number of the column of $1 \leq i \leq \binom{n}{2}$ in $Q$. This implies that $P^{(k)}$ is frozen for all $1 \leq k \leq \binom{n}{2}$, since each letter $c_i$ is inserted in column number $c_i$ on the first row. For example, consider $w = 121321$. Its insertion forms the sequence \begin{align*} \ytableausetup{centertableaux}
\begin{ytableau}
\scriptstyle 1 \\
\none \\
\none
\end{ytableau}\ &\longrightarrow\ \begin{ytableau}
\scriptstyle 1 & \scriptstyle 2 \\
\none \\
\none
\end{ytableau}\ \longrightarrow\ \begin{ytableau}
\scriptstyle 1 & \scriptstyle 2\\
\scriptstyle 2 \\
\none
\end{ytableau}\ \longrightarrow\ \begin{ytableau}
\scriptstyle 1 & \scriptstyle 2 & \scriptstyle 3\\
\scriptstyle 2 \\
\none
\end{ytableau}\\ &\longrightarrow\ \begin{ytableau}
\scriptstyle 1 & \scriptstyle 2 & \scriptstyle 3\\
\scriptstyle 2 & \scriptstyle 3\\
\none
\end{ytableau}\ \longrightarrow\ \begin{ytableau}
\scriptstyle 1 & \scriptstyle 2 & \scriptstyle 3\\
\scriptstyle 2 & \scriptstyle 3 \\
\scriptstyle 3
\end{ytableau}.\end{align*} For the other direction, assume $P^{(k)}$ is frozen for all $1 \leq k \leq \binom{n}{2}$ and suppose $w$ is not of the form $c_1 c_2 \dots c_{\binom{n}{2}}$. Then some letter $w_i$ is inserted in column $c_j > c_i$ on the first row. The letter $w_i$ bumps $c_j$. Otherwise the insertion tableau was not frozen. This means $c_j + 1$ is inserted in the second row. Either it is the largest on the row or bumps a $c_j + 1$ since the insertion tableau has to be frozen. Using this argument inductively, we see that at no point in the insertion can a letter be inserted into a column other than $c_j$. This is a contradiction. Hence $w = c_1 c_2 \dots c_{\binom{n}{2}}$, but then by \cref{zeroheight}, $Q_{i, j} > Q_{i-1, j+1}$ for all $(i, j), (i-1, j + 1) \in Q$. 

The second statement follows from the first by symmetries. \end{proof}
This subclass of sorting networks has also been studied by Schilling, Thi\'ery, White and Williams in \cite{STWW17}. Note in particular the observation that 132-avoiding sorting networks form a commutation class, that is, each 132-avoiding sorting network is reachable from another by a sequence of commutations: $s_i s_j \mapsto s_j s_i$ if $|i-j| > 1$. They also observed that by \cite[Lemma 2.2]{STWW17} $n$-element 132-avoiding sorting networks are in bijection with reduced words of the signed permutation ${-(n-1)}\ {-(n-2)}\ \dots\ {-1}$ by $s_i \mapsto s_{i-1}$. 

Another characterization of 132-avoiding sorting networks is in terms of lattice words (also called lattice permutations or Yamanouchi words). A \emph{lattice word} of type $\lambda = (\lambda_1, \dots, \lambda_m)$ is a word $w = w_1 \dots w_m$ in which for each $2 \leq i+1 \leq m$ there is at least one $i$ before it, and $i$ occurs $\lambda_i$ times in $w$.
\begin{prop}\label[prop]{132lattice}
	Let $w = w_1 \dots w_{\binom{n}{2}}$ be a sorting network and let $\bar{w} = \bar{w}_1 \dots \bar{w}_{\binom{n}{2}}$, where $\bar{w}_i = n - w_i$ for $1 \leq i \leq \binom{n}{2}$. Then $w$ is 132-avoiding if and only if $w$ (or equivalently, $w^{\text{rev}}$) is a lattice word of type $\mathrm{sc}_n$.
	It is 213-avoiding if and only if $\bar{w}$ (or equivalently, $\bar{w}^{\text{rev}}$) is a lattice word of type $\mathrm{sc}_n$.
\end{prop}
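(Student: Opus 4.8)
The plan is to work entirely inside the frozen‑tableau picture already developed in the proof of \cref{132sort}. Recall from there, and from \cref{frozenp}, that for a sorting network $w$ the property of being $132$‑avoiding is equivalent to $P^{(k)}$ being frozen for every $k$, and in that case $w=c_1c_2\cdots c_{\binom n2}$, where $c_k$ is the column of the entry $k$ in the staircase tableau $Q:=\mathrm{EG}(w)$, and $Q_{i,j}>Q_{i-1,j+1}$ whenever both cells lie in $Q$. The one extra lemma I would isolate is the effect of inserting a single letter $c$ into a frozen tableau of shape $\mu$: because $P_{i,j}=i+j-1$, the insertion never bumps a strictly larger element, so it walks down the rows of length $\geq c$ and deposits a new cell in the first row $r$ with $\mu_r<c$, at position $(r,\mu_r+1)$ with value $c+r-1$. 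Hence the result stays frozen iff $\mu_r=c-1$, equivalently iff the new cell lies in column $c$; if instead the result is not frozen then $\mu_1,\dots,\mu_{r-1}\geq c$ and $\mu_r\leq c-2$. (Note also that inserting $c=1$ always keeps a frozen tableau frozen.)

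For the forward implication I would argue: if $w$ is $132$‑avoiding then $w=c_1\cdots c_{\binom n2}$ is the list of column indices of the entries $1,2,\dots$ of $Q\in\mathrm{SYT}(\mathrm{sc}_n)$. Column $j$ of $\mathrm{sc}_n=(n-1,n-2,\dots,1)$ has exactly $n-j$ cells, so the letter $j$ occurs $n-j$ times and the content of $w$ is $\mathrm{sc}_n$. Since $Q$ is standard, the cells with entry $\leq k$ form a Young subdiagram $\mu^{(k)}=\mathrm{shape}(P^{(k)})$, whose conjugate is again a partition; as the number of letters equal to $j$ among $w_1\cdots w_k$ equals the length of column $j$ of $\mu^{(k)}$, this says precisely that every prefix contains at least as many $j$'s as $(j+1)$'s. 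So $w$ is a lattice word of type $\mathrm{sc}_n$.

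For the converse I would use contraposition. Suppose the sorting network $w$ is not $132$‑avoiding and let $k$ be minimal with $P^{(k)}$ not frozen, so $P^{(0)},\dots,P^{(k-1)}$ are frozen; by the insertion lemma each $w_i$ with $i<k$ creates its cell in column $w_i$, hence the number of letters equal to $j$ in $w_1\cdots w_{k-1}$ is the length $\mu'_j$ of column $j$ of $\mu:=\mathrm{shape}(P^{(k-1)})$. Write $c:=w_k$; since freezing fails, $c\geq 2$, and with $r$ the first row of $\mu$ of length $<c$ we get $\mu_1,\dots,\mu_{r-1}\geq c$ and $\mu_r\leq c-2$, so $\mu'_{c-1}=\mu'_c=r-1$. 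Then the prefix $w_1\cdots w_k$ has $r-1$ occurrences of $c-1$ and $r$ occurrences of $c$, violating the lattice condition; thus $w$ is not a lattice word, in particular not one of type $\mathrm{sc}_n$.

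Finally I would handle the parenthetical ``or equivalently $w^{\mathrm{rev}}$'' and the $213$ statement. Reversal preserves content, so it is enough to show that if $w\in\mathcal R(n)$ is a lattice word of type $\mathrm{sc}_n$ then so is $w^{\mathrm{rev}}$ (the converse is symmetric). A suffix of $w$ obeys the reversed lattice inequalities iff the complementary prefix, with content $p=(p_1,\dots,p_{n-1})$, satisfies $p_j\leq p_{j+1}+1$ for all $j$; combined with $p_j\geq p_{j+1}$ (valid since $w$ is a lattice word) this amounts to every prefix‑content difference being $0$ or $1$. By the equivalence just proved $w$ is $132$‑avoiding, so $p_j$ is the column‑$j$ length of $\mu^{(k)}=\mathrm{shape}(P^{(k)})$ and the requirement says $\mu^{(k)}$ is a \emph{strict} partition; but two equal parts of $\mu^{(k)}$, say in rows $r,r+1$ of common length $\ell$, would force entries $\leq k$ in $(r,\ell)$ and $(r+1,\ell)$ and an entry $>k$ in $(r,\ell+1)$, contradicting the shifted inequality $Q_{r+1,\ell}>Q_{r,\ell+1}$. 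For the $213$ statement, by \cref{complement} and the remark following it $\bar w\in\mathcal R(n)$ and its intermediate permutations are the $180^\circ$ rotations of those of $w$; since $213$ is the $180^\circ$ rotation of $132$, the sorting network $w$ is $213$‑avoiding iff $\bar w$ is $132$‑avoiding, whence the $213$ case follows by applying the $132$ result, and its reversal version, to $\bar w$. I expect the main obstacle to be the insertion lemma for frozen tableaux together with the remark that the shifted inequality forces each $\mu^{(k)}$ to be strict; the rest is bookkeeping with prefix contents.
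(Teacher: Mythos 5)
Your proof is correct and follows essentially the same route as the paper: both directions reduce to the frozen-tableau/column-word characterization from \cref{132sort}, \cref{zeroheight} and \cref{frozenp}, with your ``insertion lemma'' simply making explicit the step the paper leaves implicit (that a lattice word of type $\mathrm{sc}_n$ keeps every $P^{(k)}$ frozen, or contrapositively that the first failure of freezing forces a prefix with more $c$'s than $(c-1)$'s). The only genuine divergence is the parenthetical reversal claim, which the paper disposes of via the $Q(w^{\mathrm{rev}})$ symmetry of \cref{reverse}, whereas you derive it from the observation that the shifted inequality of \cref{132sort} forces every intermediate shape $\mu^{(k)}$ to be a strict partition; both arguments work, and yours has the small advantage of staying entirely inside the prefix-content bookkeeping.
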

\begin{proof} The proof borrows from the proof of \cref{132sort}. Suppose $w$ is a 132-avoiding sorting network. Then, by \cref{132sort} and \cref{zeroheight}, $w = c_1 c_2 \dots c_{\binom{n}{2}}$ where $c_i$ is the column of $1 \leq i \leq \binom{n}{2}$ in $Q(w)$. This implies that $w$ is a lattice word of type $\mathrm{sc}_n$. For the other direction, note that if $w$ is a lattice word of type $\mathrm{sc}_n$, then the $P^{(k)}$ obtained in computing $\mathrm{EG}(w)$ are frozen for all $1 \leq k \leq \binom{n}{2}$. By \cref{frozenp}, $w$ is 132-avoiding. 

The second statement follows from the first. \end{proof}

Fishel and Nelson proved the ``$\Rightarrow$''-direction of \cref{132lattice} in \cite[Corollary 4.5]{FN14}. Note that if $w = w_1 \dots w_k$ is a 132-avoiding sorting network, $w^{\text{rev}} = w_k \dots w_1$ is a 132-avoiding sorting network as well, since $Q(w^{\text{rev}})$ can be obtained by shifting $Q(w)$, reflecting the result anti-diagonally, complementing the entries: $m \mapsto \binom{n}{2} - m + 1$, and (un)shifting back.

We should emphasize that 132-avoiding and 312-avoiding sorting networks coincide.
\begin{prop}
	A sorting network is 132-avoiding if and only if it is 312-avoiding. Similarly, a sorting network is 213-avoiding if and only if it is 231-avoiding.
\end{prop}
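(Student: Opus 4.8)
The plan is to derive both equivalences purely from symmetries of sorting networks, reducing everything to one fact that is already available: by \cref{132lattice} (and the observation recorded right after it), a sorting network $w$ is $132$-avoiding if and only if $w^{\mathrm{rev}}$ is. Fix notation: let $w_0=n(n-1)\cdots 1$ be the reverse permutation, and for $w=w_1\cdots w_{\binom{n}{2}}\in\mathcal{R}(n)$ write $\sigma_i=s_{w_1}\cdots s_{w_i}$ for its intermediate permutations. By definition, ``$w$ is $p$-avoiding'' means that every member of the \emph{set} $\{\sigma_0,\dots,\sigma_{\binom{n}{2}}\}$ avoids the pattern $p$, so only this set --- not the order in which it is traversed --- will matter below.

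First I would transfer pattern avoidance through the two word symmetries discussed after \cref{complement}. Since $w_0$ equals its own inverse, $w^{\mathrm{rev}}\in\mathcal{R}(n)$, and a one-line computation (equivalently, the ``reflect the wiring diagram in the vertical midline'' picture) shows that the set of intermediate permutations of $w^{\mathrm{rev}}$ is $\{\sigma_i^c\}$, the value-complements. Similarly $\bar w\in\mathcal{R}(n)$ by \cref{complement}, with set of intermediate permutations $\{(\sigma_i^r)^c\}$, the reverse-complements. Pattern containment is equivariant under the complement and reverse-complement involutions of $\S_n$; on length-$3$ patterns the complement involution swaps $132\leftrightarrow 312$, while the reverse-complement involution swaps $132\leftrightarrow 213$ and $312\leftrightarrow 231$. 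Translating, we obtain three equivalences: (a) $w$ is $312$-avoiding $\iff w^{\mathrm{rev}}$ is $132$-avoiding; (b) $w$ is $132$-avoiding $\iff \bar w$ is $213$-avoiding; (c) $w$ is $312$-avoiding $\iff \bar w$ is $231$-avoiding.

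The first equivalence of the proposition now follows immediately: $w$ is $132$-avoiding $\iff w^{\mathrm{rev}}$ is $132$-avoiding (closure of the $132$-avoiding class under reversal) $\iff w$ is $312$-avoiding (equivalence (a)). For the second equivalence, apply this first equivalence --- now known for every sorting network --- to the sorting network $\bar w$: it gives that $\bar w$ is $132$-avoiding $\iff \bar w$ is $312$-avoiding, which by (b) and (c) (with $\bar{\bar w}=w$) says exactly that $w$ is $213$-avoiding $\iff w$ is $231$-avoiding.

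The only genuinely delicate point is the bookkeeping underlying the second paragraph: matching each word operation ($w\mapsto w^{\mathrm{rev}}$, $w\mapsto\bar w$) with the correct symmetry of permutation matrices under this paper's convention that transpositions act on positions, and then with the correct induced action on the length-$3$ patterns. Everything else is formal. A self-contained alternative --- examine the first elementary transposition in $w$ that produces a forbidden pattern and exhibit a forbidden pattern of the opposite type --- also works, but it requires a nontrivial amount of case analysis, so I would prefer the symmetry route.
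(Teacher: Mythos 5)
Your proposal is correct, but it takes a genuinely different route from the paper's. The paper argues directly on the chain of intermediate permutations: an occurrence of $312$ must be created by an adjacent swap of the two letters playing the roles of $3$ and $1$, and since a sorting network only ever adds inversions, the configuration immediately before that swap is an occurrence of $132$; conversely, given an occurrence $a\dots c\dots b$ of $132$ (with $a<b<c$), the pair $(a,c)$ must eventually be swapped while the inversion formed by $c$ and $b$ persists, which produces a $312$. That argument is elementary and self-contained --- it is essentially the ``alternative'' you mention and set aside at the end. Your argument instead reduces everything to the closure of $132$-avoiding sorting networks under word reversal, which the paper obtains only through the Edelman--Greene machinery (\cref{132sort}, \cref{reverse}, and the shifted-tableau description of evacuation on staircase shapes, resting ultimately on \cref{zeroheight}). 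The bookkeeping you flag as the delicate point does check out under the paper's conventions: writing $m=\binom{n}{2}$ and $w_0$ for the reverse permutation, the $j$-th intermediate permutation of $w^{\mathrm{rev}}$ is $w_0\sigma_{m-j}=\sigma_{m-j}^c$ and that of $\bar w$ is $(\sigma_j^r)^c$, the induced actions on length-$3$ patterns are as you state, and $\bar{\bar w}=w$, so your equivalences and the two chains of biconditionals go through. What your route buys is that the $213/231$ case becomes purely formal rather than ``a similar argument''; what it costs is a much heavier dependency chain for a statement the paper dispatches in three lines at the level of wiring diagrams.
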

\begin{proof} Suppose that a 132-avoiding sorting network is not 312-avoiding. This means that an intermediate permutation contains the pattern 312. It must have been created by swapping the 1 and the 3. Hence, a previous intermediate permutation contains the pattern 132, a contradiction. If a 312-avoiding sorting network is not 132-avoiding, an intermediate permutation contains the pattern 132. The 1 and the 3 are swapped in a later intermediate permutation, which leads to a contradiction. A similar argument applies to 213-avoiding and 231-avoiding sorting networks. \end{proof}

The following enumerative result was first obtained by Fishel and Nelson \cite[Corollary 3.4]{FN14} who enumerated the maximum length chains in $\mathcal{T}_n$ using a different set of methods. However, it is also a reformulation of \cref{heightenum} by \cref{132sort}. It can be reinterpreted in terms of several other combinatorial objects, for example Gelfand-Tsetlin patterns (see the entry A003121 in the OEIS \cite{OEIS}).

\begin{cor}[{\cite[Corollary 3.4]{FN14}}]\label[prop]{132enum}
	The number of 132-avoiding sorting networks of length $\binom{n}{2}$ is \[{\binom{n}{2}}!\frac{1!2!\dots(n-2)!}{1!3!\dots(2n-3)!}.\] The same holds for 213-avoiding sorting networks.
\end{cor}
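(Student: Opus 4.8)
The plan is to use \cref{132sort} to convert the statement into an enumeration of standard Young tableaux and then evaluate that count with the shifted hook length formula. By \cref{132sort} the map $\mathrm{EG}$ restricts to a bijection from the set of $132$-avoiding sorting networks of length $\binom{n}{2}$ onto the set of $Q \in \mathrm{SYT}(\mathrm{sc}_n)$ satisfying $Q_{i,j} > Q_{i-1,j+1}$ for all cells $(i,j),(i-1,j+1) \in \mathrm{sc}_n$; as noted there, this is exactly the set of shifted standard Young tableaux of the shifted staircase shape $\delta_n$, i.e.\ the shifted diagram of the strict partition $(n-1,n-2,\dots,1)$. (If the $i$-th row of $Q$ is drawn starting in column $i$, the cell directly below $(i,j)$ in the shifted diagram is the image of $(i-1,j+1)$, while the shifted rows stay strictly increasing automatically, so the shifted column-strictness is precisely the displayed inequality.) Hence the number of $132$-avoiding sorting networks of length $\binom{n}{2}$ equals $g^{\delta_n}$, the number of shifted standard tableaux of shape $\delta_n$.

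Next I would evaluate $g^{\delta_n}$ via the shifted hook length formula $g^{\delta_n} = \binom{n}{2}!\big/\prod_{(i,j)\in\delta_n} h^{\ast}(i,j)$, where $h^{\ast}$ denotes the shifted hook length. The computation to carry out is that for the staircase one has $h^{\ast}(i,j) = 2n-1-i-j$ for every cell $(i,j)$ with $1 \le i \le j \le n-1$, including the diagonal cells once the usual broken-hook convention is applied. Thus the $i$-th row of $\delta_n$ contributes $\prod_{j=i}^{n-1}(2n-1-i-j) = (2(n-i)-1)!/(n-i-1)!$, and multiplying over $i=1,\dots,n-1$ and reindexing by $k=n-i$ gives
\[
\prod_{(i,j)\in\delta_n} h^{\ast}(i,j) \;=\; \prod_{k=1}^{n-1}\frac{(2k-1)!}{(k-1)!} \;=\; \frac{1!\,3!\cdots(2n-3)!}{1!\,2!\cdots(n-2)!},
\]
and dividing $\binom{n}{2}!$ by this yields the stated formula.

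For the $213$-avoiding count I would appeal to the word-complementation symmetry of \cref{complement}: the involution $w \mapsto \bar w$ with $\bar w_i = n-w_i$ preserves $\mathcal{R}(n)$, and since complementing the letters replaces each intermediate permutation $\sigma$ by $(\sigma^r)^c$, and $\sigma$ avoids $132$ precisely when $(\sigma^r)^c$ avoids $213$, it carries $132$-avoiding sorting networks bijectively onto $213$-avoiding ones; hence the two classes are equinumerous. (Alternatively, the second assertion of \cref{132sort} identifies $213$-avoiding sorting networks with shifted standard tableaux of the anti-diagonal reflection of $\delta_n$, which has the same hook multiset.)

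I expect the only real work to be bookkeeping: verifying $h^{\ast}(i,j) = 2n-1-i-j$ with the correct treatment of the diagonal and broken hooks, and the factorial simplification. There is no conceptual obstacle beyond \cref{132sort}; the points requiring care are matching the inequality $Q_{i,j} > Q_{i-1,j+1}$ to the correct strict partition so the shifted hook length formula applies verbatim, and keeping track of whether one wants $\delta_n$ or its transpose in the $213$-avoiding case.
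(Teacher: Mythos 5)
Your proposal is correct and follows essentially the same route as the paper: \cref{132sort} identifies $132$-avoiding sorting networks with shifted standard Young tableaux of the shifted staircase, which are then counted by the shifted hook length formula (the paper cites Thrall for this enumeration via \cref{heightenum}, whereas you carry out the hook-length computation $h^{\ast}(i,j)=2n-1-i-j$ explicitly, and it checks out). The $213$-avoiding case via the complementation symmetry of \cref{complement} likewise matches the paper's appeal to symmetry.
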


The study of 132-avoiding sorting networks is continued in~\cite{LPS18}.

\subsection{Vexillary permutations}
The proof method of \cite[Theorem 8.1, part 2]{EG87} would also lead to a proof of \cref{frozenp}. Moreover, it in fact allows us to prove something stronger. A permutation is said to be \emph{vexillary} if it is 2143-avoiding. 
For $(i, j) \in D(\sigma)$, let $r(i, j)$ be the \emph{rank} of $(i, j)$, the number of 1s north-west of $(i, j)$ in $M(\sigma)$. The result below provides a direct bijection between insertion tableaux of reduced words of vexillary permutations and their diagrams. 
	\begin{thm}\label[thm]{vex}
		Let $w \in \mathcal{R}(\sigma)$. If $\sigma$ is vexillary, then the cell $(i, j)$ of $P(w)$ contains the entry $(i + j - 1) + k, k \geq 0$, if and only if $(i + k, j + k)$ is in $D(\sigma)$, where $k = r(i+k, j+k)$. Furthermore, if the set consisting of the cells $(i + k, j + k)$ for entries $(i + j - 1) + k, k \geq 0$, in cells $(i, j)$ in $P(w)$ is the diagram of a vexillary permutation, then $\sigma$ is vexillary.
	\end{thm}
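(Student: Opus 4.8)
The plan is to adapt the proof of \cref{frozendiagram} and to push it past the frozen ($k=0$) case, in the spirit of the method of \cite[Theorem~8.1, part~2]{EG87} mentioned above. By \cref{reading} and \cref{cox} it suffices to treat $w = r(P(w)) = p_m\cdots p_1$, where $p_i$ is the strictly increasing reading word of the $i$th row of $P := P(w)$. Processing $p_m, p_{m-1},\dots,p_1$ from the bottom, write $\sigma^i = s_{(p_m)_1}\cdots s_{(p_i)_{\mathrm{last}}}$, so that $\sigma^{m+1}=\mathrm{id}$, $\sigma^1=\sigma$, and, by \cref{reading}, the insertion tableau after step $i$ is exactly the tableau $P^{[i]}$ obtained from rows $i,\dots,m$ of $P$ by reindexing them to begin at row $1$. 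I would prove the forward direction in the refined incremental form: for every $i$, the cell $(r,c)$ of $P^{[i]}$ carries entry $(r+c-1)+k$ with $k\ge 0$ if and only if $(r+k,c+k)\in D(\sigma^i)$ with $k=r(r+k,c+k)$, and this assignment is a bijection from the cells of $P^{[i]}$ onto $D(\sigma^i)$. Taking $i=1$ gives the theorem, and the special case in which $P^{[i]}$ is frozen is precisely \cref{frozendiagram}, which anchors the base of the induction.

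For the inductive step, pass from $\sigma^{i+1}$ to $\sigma^i = \sigma^{i+1}\sigma(p_i)$ and apply the Remark from the proof of \cref{frozendiagram}: writing $p_i$ as a concatenation of its maximal runs of consecutive values, each run $[a,a+\ell-1]$ moves the $1$ in column $a$ of $M(\sigma^{i+1})$ rightward by $\ell$ while sliding the $1$'s in columns $a+1,\dots,a+\ell$ one step to the left, and every other $1$ stays put. Translating through the inductive correspondence, one checks that (i) each such rightward move uncovers $\ell$ cells of $D(\sigma^i)$ corresponding exactly to the cells of the matching run of row $i$ of $P$, with the correct entries; (ii) the accumulated leftward slides shift every cell of $D(\sigma^{i+1})$ one column to the left, which is exactly what is needed since reindexing $P^{[i+1]}$ to $P^{[i]}$ lowers every old defect by one; and (iii) the newly uncovered cells carry rank equal to their defect and no cell of $D(\sigma^i)$ is missed. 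Step (iii) is the heart, and the only place where $2143$-avoidance enters: for the ranks to come out right the new cells must be positioned correctly relative to those built so far, and a rank discrepancy would produce a $2143$ pattern among the $1$'s involved. The delicate point --- and the one I expect to be the main obstacle --- is that patterns do not persist up the weak Bruhat order, so this forbidden pattern must be pinned down in $M(\sigma)$ itself, either by reading it off the global data of $P(w)$ (all of whose rows are available) or by first establishing the auxiliary fact that every prefix $\sigma^i$ of this particular reduced word is already vexillary; making this ``bad configuration $\Rightarrow 2143$'' lemma clean, rather than via an awkward case analysis of how the runs of $p_i$ meet the diagram built so far, is where the real work lies. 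In the dominant ($132$-avoiding) case all defects vanish, every run of $p_i$ begins at $i$, and no such configuration can occur, which is why the original method of \cite[Theorem~8.1, part~2]{EG87} suffices there.

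For the converse, first observe that the map $E\colon P\mapsto\{(i+k,j+k):P_{i,j}=(i+j-1)+k\}$ is injective on row- and column-strict tableaux: along each north-west--south-east diagonal of $P$ the defects are weakly increasing (by strictness), so $r\mapsto r+k$ is strictly increasing along the diagonal, $E$ preserves the value of $i-j$, and hence $E(P)$ recovers --- diagonal by diagonal --- first the length of each diagonal of $P$ and then its entries, so also the shape of $P$. Now suppose $E(P(w))$ is the Rothe diagram of some vexillary $\tau$, and choose any $w''\in\mathcal{R}(\tau)$. The already-proved forward direction applied to $\tau$ gives $E(P(w''))=D(\tau)=E(P(w))$, hence $P(w'')=P(w)$ by injectivity; therefore $r(P(w))=r(P(w''))$ is a single word lying in $\mathcal{R}(\sigma)$ and in $\mathcal{R}(\tau)$ by \cref{cox}, and since a word determines its permutation we conclude $\sigma=\tau$, so $\sigma$ is vexillary. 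This part, like the reduction and the run decomposition, should be routine once the inductive step is in hand.
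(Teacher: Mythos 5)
Your converse is essentially the paper's argument (injectivity of the diagonal-shift map $\delta\colon (i,j)\mapsto (i+k,j+k)$ on row- and column-strict tableaux, combined with \cref{cox} to separate insertion tableaux of distinct permutations), and that part is fine. The forward direction, however, has a genuine gap, and you have located it yourself: the entire content of the theorem sits inside your step (iii), which you leave as a ``bad configuration $\Rightarrow$ 2143'' lemma still to be made clean. As stated, your induction hypothesis asserts the full correspondence between $P^{[i]}$ and $D(\sigma^i)$ with $k=r(\cdot,\cdot)$ at every intermediate stage; but nothing guarantees that the prefixes $\sigma^i$ are vexillary (as you note, patterns do not persist up the weak order), so the hypothesis may simply be false at intermediate steps, and the claims (i)--(iii) --- in particular that $D(\sigma^i)$ restricted to the old cells is $D(\sigma^{i+1})$ shifted one column left, and that the uncovered cells acquire rank equal to their defect --- are asserted, not proved. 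Without either the auxiliary fact about prefixes or a mechanism for reading the forbidden $2143$ off $M(\sigma)$ itself, the induction does not close.

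The paper sidesteps this entirely by not inducting on prefixes. It imports from \cite[Theorem 8.1]{EG87} that for vexillary $\sigma$ the insertion tableau $P(w)$ equals, for \emph{every} $w\in\mathcal{R}(\sigma)$, the explicit tableau $T(\sigma)$ obtained by left-justifying the column configuration $T_0(\sigma)$ built from the Lehmer code (column $x$ containing $x,x+1,\dots,x+r_x-1$). The proof is then a single global comparison between $T(\sigma)$ and $D(\sigma)$: one fixes a connected component $D_{(i+k,j+k)}$ of the Rothe diagram, counts $1$-entries to see that the relevant columns of $T_0(\sigma)$ carry exactly the required entries $(i+j-1)+k+m,\dots$, and uses vexillarity exactly once --- to show that every column of $T_0(\sigma)$ to the left of the component is either shorter than $i$ or longer than $i+c_0-1$ (otherwise two $1$s would form a $2143$ with the $1$s bounding the component), so that left-justification shifts the component by precisely $k=r(i+k,j+k)$ columns. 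If you want to salvage your inductive route you would need to prove the missing lemma; the more economical fix is to abandon the induction and argue globally from $T(\sigma)$ as the paper does.
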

\begin{proof} We prove this by using the construction in the proof of \cite[Theorem 8.1, part 2]{EG87}. The idea is as follows. For a permutation $\sigma$, create a row (with possible empty spaces) of cells, the columns $x$ containing the positions $x$ such that $\sigma(y) < \sigma(x)$ for some $y > x$. Next, for each $x$ in the row, add $x + 1, \dots, x + r_x(\sigma) - 1$, where $r_x = \vert\{y : y > x, \sigma(y) < \sigma(x)\}\vert$, below $x$ in the same column. Note that $r_x$ is the $x$:th component of the (Lehmer) code of $\sigma$, that is, the number of inversions whose smaller component is $x$. Denote this configuration of cells by $T_0(\sigma)$. Finally, left-justify the rows and call the resulting increasing tableau $T(\sigma)$. It follows from \cite[Theorem 8.1, part 1]{EG87} and the proof of \cite[Theorem 8.1, part 2]{EG87} that for $\sigma$ vexillary, $T(\sigma) = P(w)$ for all $w \in \mathcal{R}(\sigma)$. As an example, $\sigma = 813975246$ is considered in \cref{constr_vex}.
\begin{figure}[htbp!]
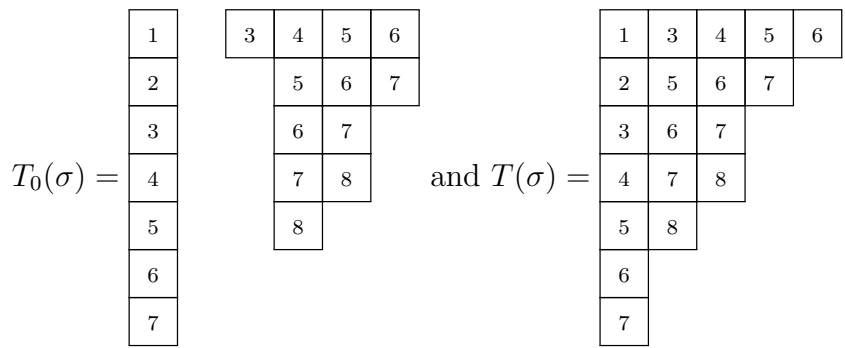

\[T_0(\sigma) = \begin{ytableau}
	\scriptstyle 1 & \none & \scriptstyle 3 & \scriptstyle 4 & \scriptstyle 5 & \scriptstyle 6\\
	\scriptstyle 2 & \none & \none & \scriptstyle 5 & \scriptstyle 6 & \scriptstyle 7\\
	\scriptstyle 3 & \none &\none & \scriptstyle 6 & \scriptstyle 7\\
	\scriptstyle 4 & \none &\none & \scriptstyle 7 & \scriptstyle 8\\
	\scriptstyle 5 & \none &\none & \scriptstyle 8\\
	\scriptstyle 6\\
	\scriptstyle 7\\
	\end{ytableau}\ \text{and}\ T(\sigma) = \begin{ytableau}
	\scriptstyle 1 & \scriptstyle 3 & \scriptstyle 4 & \scriptstyle 5 & \scriptstyle 6\\
	\scriptstyle 2 & \scriptstyle 5 & \scriptstyle 6 & \scriptstyle 7\\
	\scriptstyle 3 & \scriptstyle 6 & \scriptstyle 7\\
	\scriptstyle 4 & \scriptstyle 7 & \scriptstyle 8\\
	\scriptstyle 5 & \scriptstyle 8\\
	\scriptstyle 6\\
	\scriptstyle 7\\
	\end{ytableau}\]
\caption{The construction used in the proof of \cref{vex} for $\sigma = 813975246$. Compare with \cref{diag_vex}.}
\label[fig]{constr_vex}
\end{figure}
\begin{figure}[htbp!]
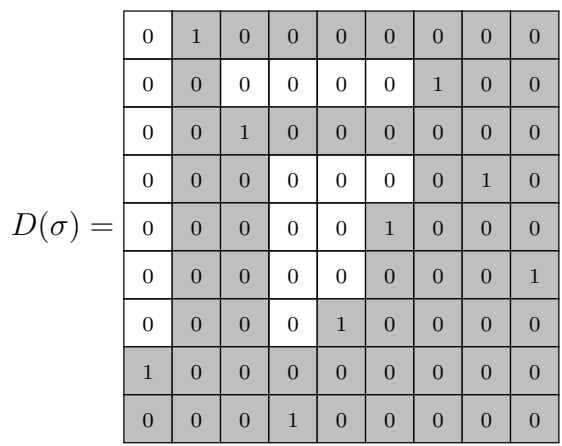

\[D(\sigma) = 
\begin{ytableau}
		\scriptstyle 0 & *(lightgray)\scriptstyle 1 & *(lightgray)\scriptstyle 0 & *(lightgray)\scriptstyle 0 & *(lightgray)\scriptstyle 0 & *(lightgray)\scriptstyle 0 & *(lightgray)\scriptstyle 0 & *(lightgray)\scriptstyle 0 & *(lightgray)\scriptstyle 0\\
		\scriptstyle 0 & *(lightgray)\scriptstyle 0 & \scriptstyle 0 & \scriptstyle 0 & \scriptstyle 0 & \scriptstyle 0 & *(lightgray)\scriptstyle 1 & *(lightgray)\scriptstyle 0 & *(lightgray)\scriptstyle 0 \\
		\scriptstyle 0 & *(lightgray) \scriptstyle 0 & *(lightgray)\scriptstyle 1 & *(lightgray)\scriptstyle 0 & *(lightgray)\scriptstyle 0 & *(lightgray)\scriptstyle 0 & *(lightgray)\scriptstyle 0 & *(lightgray)\scriptstyle 0 & *(lightgray)\scriptstyle 0 \\
		\scriptstyle 0 & *(lightgray) \scriptstyle 0 & *(lightgray)\scriptstyle 0 & \scriptstyle 0 & \scriptstyle 0 & \scriptstyle 0 & *(lightgray)\scriptstyle 0 & *(lightgray)\scriptstyle 1 & *(lightgray)\scriptstyle 0\\
		\scriptstyle 0 & *(lightgray) \scriptstyle 0 & *(lightgray)\scriptstyle 0 & \scriptstyle 0 & \scriptstyle 0 & *(lightgray)\scriptstyle 1 & *(lightgray)\scriptstyle 0 & *(lightgray)\scriptstyle 0 & *(lightgray)\scriptstyle 0\\
		\scriptstyle 0 & *(lightgray) \scriptstyle 0 & *(lightgray)\scriptstyle 0 & \scriptstyle 0 & \scriptstyle 0 & *(lightgray)\scriptstyle 0 & *(lightgray)\scriptstyle 0 & *(lightgray)\scriptstyle 0 & *(lightgray)\scriptstyle 1\\
		\scriptstyle 0 & *(lightgray) \scriptstyle 0 & *(lightgray)\scriptstyle 0 & \scriptstyle 0 & *(lightgray)\scriptstyle 1 & *(lightgray)\scriptstyle 0 & *(lightgray)\scriptstyle 0 & *(lightgray)\scriptstyle 0 & *(lightgray)\scriptstyle 0\\
		*(lightgray)\scriptstyle 1 & *(lightgray)\scriptstyle 0 & *(lightgray)\scriptstyle 0 & *(lightgray)\scriptstyle 0 & *(lightgray)\scriptstyle 0 & *(lightgray)\scriptstyle 0 & *(lightgray)\scriptstyle 0 & *(lightgray)\scriptstyle 0 & *(lightgray)\scriptstyle 0\\
		*(lightgray)\scriptstyle 0 & *(lightgray)\scriptstyle 0 & *(lightgray)\scriptstyle 0 & *(lightgray)\scriptstyle 1 & *(lightgray)\scriptstyle 0 & *(lightgray)\scriptstyle 0 & *(lightgray)\scriptstyle 0 & *(lightgray)\scriptstyle 0 & *(lightgray)\scriptstyle 0\\
	\end{ytableau}
\]
\caption{The diagram of $\sigma = 813975246$. Compare with \cref{constr_vex}.}
\label[fig]{diag_vex}
\end{figure}

Consider the connected component $D_{(i+k, j+k)}$ in the diagram of a vexillary permutation $\sigma$ having its north-west corner in $(i+k, j+k)$, where $k$ is the number of 1s north-west of $(i+k, j+k)$. Note that $k$ is well-defined. Assume that $D_{(i+k, j+k)}$ has column lengths $c_0, \dots, c_{l}$. 

We first show that for $0 \leq m \leq l$, column $j + k + m$ of $T_0(\sigma)$ has at least $c_m$ entries weakly south of row $i$. These entries are then by construction $(i + j - 1) + k + m, \dots, (i + j - 1) + k + m + c_m - 1$ as required in $P(w)$. It is clear that there are exactly $c_m$ 1s east of column $j+k + m$, north of row $(i + k) + c_m$ and weakly south of $i+k$, whereas the 1-entry of column $j+k + m$ must lie weakly south of $(i + k) + c_m$. See \cref{illustration_vex}. Furthermore, there are exactly $k$ 1s north-west of $(i + k, j + k)$ in the permutation matrix. Hence $i-1$ 1s are strictly north-east of the component with north-west corner in $(i+k, j+k)$. Hence column $j+k +m$ of $T_0(\sigma)$ contains at least the entries $j+k+m, \dots (j+k) + (i-2) + m, (i + j - 1) + k + m, \dots, (i + j - 1) + k + m + c_m - 1$. This proves the claim.
\begin{figure}[htbp!]
\[
\setlength{\arrayrulewidth}{1.7pt}
\begin{array}{@{\hspace{0pt}}c@{\hspace{0pt}}c}
\begin{array}[t]{@{\hspace{0pt}}r}\vphantom{\begin{sideways} $\scriptscriptstyle j+k+l$\end{sideways}} \vspace{\arrayrulewidth} \\ \begin{ytableau} \none \end{ytableau}\\ \begin{ytableau} \none \end{ytableau} \\ \begin{ytableau} \none \end{ytableau} \\ \begin{ytableau} \none \end{ytableau} \\ 
\begin{ytableau} \none \end{ytableau} \vspace{\arrayrulewidth} \\ \begin{ytableau} \none \end{ytableau} \vspace{\arrayrulewidth} \\ \begin{minipage}[c][1.5em]{4ex}\flushright $\scriptscriptstyle i+k$\end{minipage} \\ \begin{ytableau} \none \end{ytableau} \\ \begin{ytableau} \none \end{ytableau} \\ \begin{ytableau} \none \end{ytableau} \\ \begin{minipage}[c][1.5em]{8ex}\flushright $\scriptscriptstyle i+k+c_0$\end{minipage} \\ \begin{ytableau} \none \end{ytableau} \\ \begin{ytableau} \none \end{ytableau}\end{array} & \begin{array}[t]{|@{\hspace{0pt}}c@{\hspace{0pt}}|@{\hspace{0pt}}c@{\hspace{0pt}}|@{\hspace{0pt}}c@{\hspace{0pt}}|@{\hspace{0pt}}c@{\hspace{0pt}}|@{\hspace{0pt}}c@{\hspace{0pt}}}
\multicolumn{1}{c}{} & \multicolumn{1}{c}{} & \multicolumn{1}{l}{\begin{sideways}$\scriptscriptstyle j+k$\end{sideways} \quad\quad\quad\quad\quad\quad\begin{sideways}$\scriptscriptstyle j+k+l$\end{sideways}} & \multicolumn{1}{c}{}\\
\hline
\begin{minipage}[c]{20mm}\centering $k-1$ \\ 1-entries \\ \end{minipage} & \begin{ytableau} 0 \\ \vertdots \\ \vertdots \\ \vertdots \\ 0 \end{ytableau} & \textbf{\Large 0} & \begin{minipage}[c]{18mm}\centering $i-1$ \\ 1-entries \\ \end{minipage}\\
\hline
\begin{ytableau} 0 & \dots & \dots & \dots & \dots & \dots & 0 \end{ytableau} & \begin{ytableau} *(lightgray)1 \end{ytableau} & \begin{ytableau} *(lightgray)0 & *(lightgray)\dots & *(lightgray)0 & *(lightgray)0 & *(lightgray)0 \end{ytableau} & \begin{ytableau} *(lightgray)0 & *(lightgray)\dots & *(lightgray)0 \end{ytableau} \\ 
\hline
\textbf{\Large 0} & \begin{ytableau} *(lightgray)0 \\ *(lightgray)0 \\ *(lightgray)0\\ *(lightgray)0 \end{ytableau} & \begin{ytableau}
*(cyan)0 & *(cyan)\dots & *(cyan)0 & *(cyan)0 & *(cyan)0\\
*(cyan)0 & *(cyan)\dots & *(cyan)0 & *(cyan)0 & *(lightgray)1\\
*(cyan)0 & *(cyan)\dots & *(cyan)0 & *(cyan)0 & *(lightgray)0\\
*(cyan)0 & *(cyan)\dots & *(cyan)0 & *(lightgray)1 & *(lightgray)0\\
\end{ytableau} & \\ \hline
\begin{minipage}[c]{20mm}\centering $j-1$ \\ 1-entries \\ \end{minipage} & \begin{ytableau} *(lightgray)0 \\ *(lightgray)\vertdots \\ *(lightgray)0 \end{ytableau} & & \\
\hline
\end{array}
\end{array}
\]
\caption{An example illustrating the proof of \cref{vex}. The component $D_{(i+k, j+k)}(\sigma)$ is in cyan and the entries shaded are not in $D(\sigma)$.}
\label[fig]{illustration_vex}
\end{figure}

Next we show that the part of $T_0(\sigma)$ corresponding to $D_{(i+k, j+k)}$ is shifted to the left by $k$ steps in $T(\sigma)$. No 1s appear west of the component. Hence all columns of $T_0(\sigma)$ left of $j + k$ have either length shorter than $i$ (the rows of the 1s north-west of $D_{(i+k, j+k)}$ have to be increasing since otherwise a pair of 1s would form the pattern 2143 together with the 1s in column $j+k$ and row $i+k$) or longer than $i + c_0 - 1$. Since $r(i+k, j+k) = k$, exactly $k$ columns are shorter. This proves that $(x, y) \in T(\sigma)$ contains $(x+y-1) + k$ for all $(x+k, y+k) \in D_{(i+k, j+k)}$.

We have now proved an alternative description of the map $T: \sigma \mapsto T(\sigma)$ in terms of the diagram of $\sigma$ if $\sigma$ is vexillary. The $``\Rightarrow"$-direction then follows from the bijectivity of $T$. Note that $T^{-1}$ can be described in terms of the diagram as sending $(i, j)$ with entry $(i+j-1) + k$ to $(i+k, j+k)$ (where $k = r(i+k, j+k)$).

The diagram description of $T^{-1}$ does not apply to general permutations. However, we can regardless define the same map for the insertion tableau $P(w)$ of any reduced word $w$. Let us call it $\delta: P(w) \mapsto \delta(P(w)) \subset \mathbb{N}^2$. The last statement claims that the pre-image of diagrams of vexillary permutations under $\delta$ is precisely the insertion tableaux of reduced words of vexillary permutations. First, note that by \cref{cox}, if $\sigma_1 \neq \sigma_2$ and $w_1 \in \mathcal{R}(\sigma_1), w_2 \in \mathcal{R}(\sigma_2)$, then $P(w_1) \neq P(w_2)$. Hence, it suffices to show that $\delta$ is injective. If the cell $(i_1, j_1)$ of $P(w)$ contains $(i_1 + j_1 - 1) + k_1$, then, since $P(w)$ is row and column strict, a cell $(i_2, j_2)$ of $P(w)$ with $i_2 \geq i_1$, $j_2 \geq j_1$ has to contain some $(i_2 + j_2 - 1) + k_2$  with $k_2 \geq k_1$, and $k_2 = k_1$ precisely when $(i_1, j_1) = (i_2, j_2)$. In other words, $(i_1, j_1)$ is sent to $(i_1 + k_1, j_1 + k_1)$, $(i_2, j_2)$ to $(i_2 + k_2, j_2 + k_2)$, and $(i_1 + k_1, j_1 + k_1) = (i_2 + k_2, j_2 + k_2)$ if and only if $(i_1, j_1) = (i_2, j_2)$, that is, different cells cannot be sent to the same cell in $\delta(P(w))$. This implies that $\delta$ is invertible and hence injective.
\end{proof} Note that one could as well use the construction in~\cite[p.~357]{BJS93} in the first paragraph of the proof. We should also remark that the entries with $k = 0$ are in the frozen region of $P(w)$. 

\section{Non-reduced words} \label{S:nonreduced}
The Edelman--Greene bijection takes as its argument a reduced word. In order to understand the insertion better, we study its interaction with non-reduced words as well. Simultaneously, we obtain \cref{zeroheight} which can be used to prove \cref{132sort} and \cref{132lattice}.

Fix a standard Young tableau $Q$ and let $\mathcal{W}_Q = \{w \in \mathbb{N}^* :  \mathrm{EG}(w) = Q, P(w) \ \mathrm{frozen}\}$. Recall that by \cref{frozenp} the reduced words in the sets $\mathcal{W}_Q$ are reduced words of 132-avoiding permutations. Note that since the tableau $Q(w) = \textrm{EG}(w)$ has $\textrm{len}(w)$ entries, all words in $\mathcal{W}_Q$ have the same length.  Also, since the Edelman--Greene correspondence is a bijection between $\mathcal{R}(\sigma)$ and $\mathrm{SYT}(\lambda(\sigma))$ for $\sigma \in \S_n(132)$, $\mathcal{W}_Q$ contains exactly one reduced word.

We define the poset $\mathcal{P}_Q = (\mathcal{W}_Q, \preceq)$ by setting $v \preceq w$ for $v, w \in \mathcal{W}_Q$ if $v_i \leq w_i$ for all $1 \leq i \leq \textrm{len}(v) = \textrm{len}(w)$. \cref{posets} contains some examples.

\begin{figure}[!h]
	\ytableausetup{smalltableaux}
	\centering
	\begin{minipage}[b]{0.32\textwidth}
	\centering
	\includegraphics[scale=0.5]{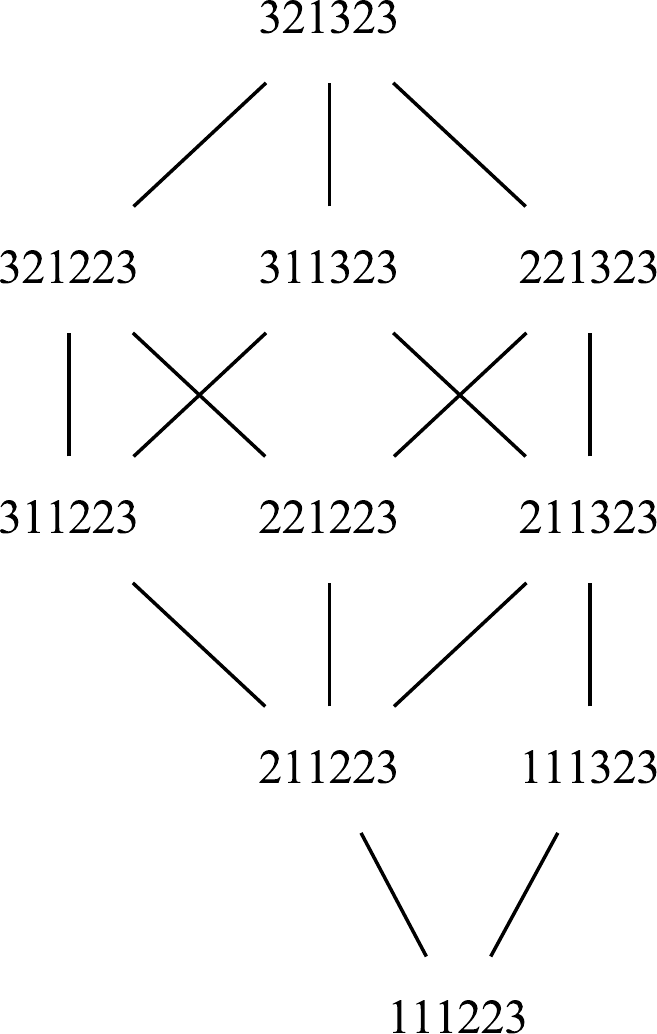}
	\\\bigskip\noindent
	\begin{ytableau}
		\scriptstyle 1 & \scriptstyle 4 & \scriptstyle 6 \\
		\scriptstyle 2 & \scriptstyle 5 \\
		\scriptstyle 3
		\end{ytableau}
	\end{minipage}
	\begin{minipage}[b]{0.2\textwidth}
	\centering
	\includegraphics[scale=0.5]{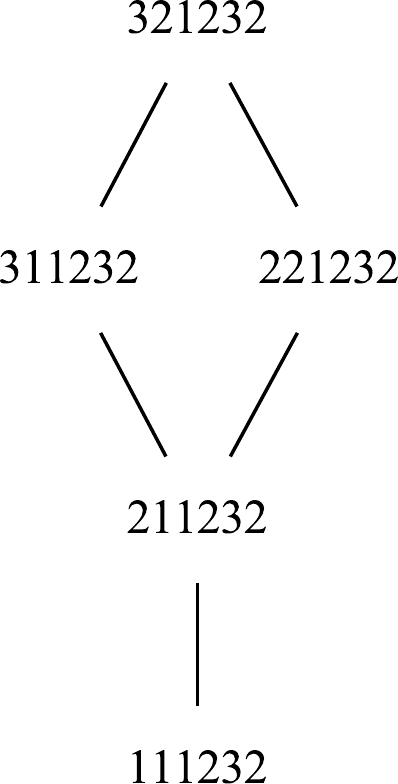}
	\\\bigskip
	\begin{ytableau}
	\scriptstyle 1 & \scriptstyle 4 & \scriptstyle 5 \\
	\scriptstyle 2 & \scriptstyle 6 \\
	\scriptstyle 3
	\end{ytableau}
	\end{minipage}
	\begin{minipage}[b]{0.2\textwidth}
	\centering
	\includegraphics[scale=0.5]{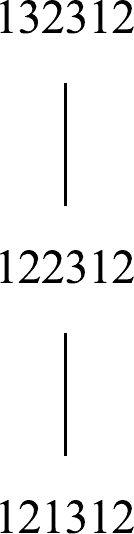}
	\\\bigskip\noindent
	\centering
	\begin{ytableau}
	\scriptstyle 1 & \scriptstyle 2 & \scriptstyle 4 \\
	\scriptstyle 3 & \scriptstyle 6 \\
	\scriptstyle 5
	\end{ytableau}
	\end{minipage}
	\caption{Some examples of the 16 posets $\mathcal{P}_Q$ for $Q \in \mathrm{SYT}(\mathrm{sc}_4)$. The bottom elements are the column words of the respective tableaux below.}
	\label[fig]{posets}
\end{figure}

\subsection{Properties of \texorpdfstring{$\mathcal{P}_Q$}{PQ}}
First, we extend a result of Edelman and Greene. The \emph{descents} of a standard Young tableau $T$ are entries $k$ such that if $T_{i, j} = k$, then $T_{i', j'} = k + 1$ for $i' > i$, in other words $k + 1$ is strictly south of $k$. Let $\mathrm{Des}(T) = \{k : k \text{ is a descent of } T\}$ be the set of descents of $T$. Correspondingly, for $w \in \mathbb{N}^*$, let $\mathrm{Des}(w) = \{1 \leq i \leq \mathrm{len}(w) - 1 : w_i \geq w_{i+1}\}$. The elements of $\mathrm{Des}(w)$ are called the \emph{weak descents} of $w$.

At times, in particular in the following proof, we refer to bumping paths. Consider constructing EG$(w)$ for an arbitrary word $w = w_1 \cdots w_m$. When $w_k$ is inserted, some entries $P^{(k-1)}_{i, j}$ of $P^{(k-1)}$ may be bumped. We let the \emph{bumping path} $p^w_{w_k}$ of $w_k$ be the set of the corresponding cells $(i, j) \in P^{(k-1)}$.

\begin{prop}\label[prop]{descents}
	For all $w \in \mathcal{P}_Q$, $\mathrm{Des}(w) = \mathrm{Des}(Q)$.
\end{prop}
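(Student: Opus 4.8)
The plan is to prove the statement in two inclusions: $\mathrm{Des}(Q)=\mathrm{Des}(w)$ for the unique reduced $w\in\mathcal P_Q$, and then $\mathrm{Des}(w)$ is constant across all of $\mathcal P_Q$. For the first part I would invoke (or re-derive) the known fact of Edelman and Greene that for a reduced word $w$, the descent set of $Q(w)$ equals the (ordinary) descent set $\{i : w_i > w_{i+1}\}$ of $w$; for a reduced word there are no equalities $w_i=w_{i+1}$, so the weak descents and ordinary descents coincide, and hence $\mathrm{Des}(Q)=\mathrm{Des}(w)$ for the reduced representative. (This can also be seen directly from the promotion/recording description: $k\in\mathrm{Des}(Q(w))$ exactly when inserting $w_{k+1}$ into $P^{(k)}$ causes a bump, which for a reduced word happens precisely when $w_k>w_{k+1}$.) So it remains to show that $\mathrm{Des}$ is invariant on the poset $\mathcal P_Q$.

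The key is to understand how a single letter of a word in $\mathcal P_Q$ is inserted. Since every $P^{(k)}$ along the way is frozen, each letter $w_k$ is inserted into the \emph{first} row at column position exactly $w_k$ (because in a frozen tableau $P_{1,j}=j$, so $w_k$ either lands at the end of row $1$ if $w_k$ is one more than the current row length, or it bumps the entry $w_k$ sitting in column $w_k$, sending $w_k+1$ down to row $2$, and so on). Consequently the recording cell for step $k$, i.e. the new cell added to $P^{(k-1)}$, sits in column $w_k$, and it lies strictly below the previous step's new cell (equivalently $k\in\mathrm{Des}(Q)$) precisely when the bumping path at step $k$ starts in row $1$ at a column $\le$ the column of the step-$(k-1)$ cell — and since the step-$k$ cell is in column $w_k$ while the step-$(k-1)$ cell is in column $w_{k-1}$, the condition for $k-1\in\mathrm{Des}(Q)$ translates, via frozenness, into a statement comparing $w_{k-1}$ and $w_k$. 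I would make this precise: show that for $w\in\mathcal P_Q$, $i\in\mathrm{Des}(Q)$ if and only if $w_i\ge w_{i+1}$, i.e. $\mathrm{Des}(Q)=\mathrm{Des}(w)$ directly for \emph{every} $w\in\mathcal P_Q$, not just the reduced one. The forward direction uses that if $w_i<w_{i+1}$ then the new cell at step $i+1$ lies strictly to the right in row $1$ of the step-$i$ cell (or the step-$i$ cell was itself in row $1$ to the left), so $i\notin\mathrm{Des}(Q)$; the reverse direction uses that if $w_i\ge w_{i+1}$ then inserting $w_{i+1}$ bumps within the frozen region and the new cell is weakly south-west, hence strictly south of the column-$w_i$ cell added at step $i$, giving $i\in\mathrm{Des}(Q)$. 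Here the anti-diagonal constancy of entries in the frozen region ($P_{a,b}=a+b-1$) is exactly what forces the bumping path of a down-bump to stay in a single column and the bumping path of an end-of-row insertion to add a cell strictly to the right.

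The main obstacle I expect is bookkeeping the precise geometry of the two consecutive bumping paths $p^w_{w_i}$ and $p^w_{w_{i+1}}$ inside a frozen tableau and the borderline cases where one or both insertions end in row $1$ (no bump), where the new cell is an end-of-row cell rather than a bumped-into cell; one must check that the "strictly south versus strictly east" dichotomy still cleanly detects descents there. Once the claim $\mathrm{Des}(Q)=\mathrm{Des}(w)$ for all $w\in\mathcal P_Q$ is established, the proposition is immediate, and as a bonus the reduced-word case recovers the Edelman--Greene statement. I would present the argument as: (1) recall frozenness of all $P^{(k)}$ for $w\in\mathcal P_Q$ from \cref{frozenp} and the definition of $\mathcal W_Q$; (2) observe letters insert into column $w_k$ of row $1$; (3) identify the new recording cell of step $k$ and its column; (4) compare steps $i$ and $i+1$ to get the "$i\in\mathrm{Des}(Q)\iff w_i\ge w_{i+1}$" equivalence, handling the end-of-row subcases; (5) conclude.
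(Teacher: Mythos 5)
Your reduction has a genuine gap: the entire second half of your argument rests on the claim that for $w\in\mathcal{W}_Q$ ``every $P^{(k)}$ along the way is frozen,'' but that is not what membership in $\mathcal{W}_Q$ requires. The definition only asks that the \emph{final} insertion tableau $P(w)$ be frozen; intermediate tableaux need not be. A minimal counterexample is $w=21$, which lies in $\mathcal{W}_Q$ for $Q$ the two-cell column tableau: here $P^{(1)}$ is the single cell containing $2$, which is not frozen, and the letter $2$ is inserted into column $1$ of row $1$, not column $2$. The paper's own example of an element of a poset $\mathcal{P}_Q$ beginning $4521343\dots$ makes the same point ($P^{(1)}$ is a single cell containing $4$). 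So your key structural claim --- that each letter $w_k$ enters row $1$ in column exactly $w_k$ --- fails for general elements of $\mathcal{W}_Q$ (it holds only for lattice-type words such as the minimal element $c(Q)$, and even in a frozen tableau it additionally needs $w_k$ to be at most one more than the current length of row $1$). Consequently steps (2)--(4) of your outline, which translate ``$i\in\mathrm{Des}(Q)$'' into a comparison of the columns $w_i$ and $w_{i+1}$ via frozenness, do not go through. The first half of your proposal (the reduced representative, via Edelman--Greene) is fine but does not help with the non-reduced elements.

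The correct route --- and the one the paper takes --- is to drop frozenness entirely and prove the statement for an \emph{arbitrary} word by comparing consecutive bumping paths, extending Lemma 6.28 of Edelman--Greene: if $w_i\ge w_{i+1}$, then at each row the element inserted from the path of $w_{i+1}$ is at most the element inserted from the path of $w_i$ (with care in the equal-bump case), so $p^w_{w_{i+1}}$ stays weakly to the left of $p^w_{w_i}$ and the new cell for $i+1$ lands strictly below that for $i$, giving $i\in\mathrm{Des}(Q)$; if $w_i<w_{i+1}$, the path of $w_{i+1}$ stays strictly to the right, so $i\notin\mathrm{Des}(Q)$. This argument never uses that $P(w)$ is frozen, so it in fact proves $\mathrm{Des}(w)=\mathrm{Des}(Q(w))$ for every word, of which the proposition is a special case. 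If you want to keep your frozen-tableau picture, it would only establish the result for the column word $c(Q)$ and other lattice words in $\mathcal{P}_Q$, which is not enough.
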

\begin{proof} The proof is based on an extension of Lemma 6.28 in \cite{EG87} (which is analogous to a property of the RSK correspondence). Let $w = w_1 \dots w_i \dots w_m$. First suppose $i \in \mathrm(w)$. In running the Edelman--Greene insertion, when $x_i$ and $y_i$, $x_i \geq y_i,$ are inserted consecutively on row $i$, $x_i$ either becomes the last entry of that row or bumps some $x' \geq x_i$, and $y_i$ bumps some $y'$, $x_i \geq y' \geq y_i$. Hence, $x_{i+1} \geq x_i + 1$ and $y_{i+1} \leq x_i + 1$. Using this argument inductively shows that $p^w_{w_{i+1}}$ is weakly to the left of $p^w_{w_{i}}$. Thus $i + 1$ ends up on a lower row than $i$ in $Q = \mathrm{EG}(w)$, so a weak descent of the word becomes a descent in $Q$. 

For the converse, suppose $1 \leq i \leq m-1$ is not a weak descent in $w$, which means $w_i < w_{i+1}$. Again, when $x_i$ and $y_i$, $x_i < y_i,$ are inserted consecutively on row $i$, $x_i$ either becomes the last entry of that row or bumps some $x' \geq x_i$. Next, since $y_i > x_i$, it is either inserted at the end of the row or bumps some $y' \geq y_i> x_i$. Furthermore, since the insertion tableaux always have increasing rows, $y' > x'$. Hence, $x_{i+1} < y_{i+1}$, except possibly in the case $x_i$ bumped an $x_i$, and $y' = x_i + 1$. But then necessarily $y_i = x_i + 1$, meaning that $x_{i+1} = x_i + 1$ and $y_{i+1} = x_i + 2$, so $x_{i+1} < y_{i+1}$. Repeating this argument inductively, we get that $p^w_{w_{i+1}}$ is strictly to the right of $p^w_{w_{i}}$. Hence, $i + 1$ cannot end up in a lower row than $i$, and $i$ is not a descent in $Q$. \end{proof}

Suppose $Q$ is a standard Young tableau with $m$ entries. Define $c(Q) = c_1 \dots c_i \dots c_m$, where $c_i$ is the column of $i$ in $Q$ for $1 \leq i \leq m$. Then we say that $c(Q)$ is the \emph{column word} of $Q$. See \cref{posets} for examples. Note that this term is used differently by other authors. Column words of standard Young tableaux are, by their definition, lattice words.

\begin{prop}\label[prop]{topbottom}
	For $Q \in \mathrm{SYT(sc}_n)$, $\hat{0} = c(Q)$ is the unique minimal element in $\mathcal{P}_Q$.
\end{prop}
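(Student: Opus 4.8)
The plan is to establish two facts: that $c(Q)$ actually belongs to $\mathcal{W}_Q$, and that $c(Q)\preceq w$ for every $w$ with $\mathrm{EG}(w)=Q$ (in particular for every $w\in\mathcal{W}_Q$). Together these say $c(Q)$ lies below every element of $\mathcal{W}_Q$ and is itself in $\mathcal{W}_Q$, hence is the minimum of $\mathcal{P}_Q$, and in particular its unique minimal element.

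First I would show $c(Q)\in\mathcal{W}_Q$ by induction on the number of cells $m=|\lambda(Q)|$, proving the stronger statement that $\mathrm{EG}(c(Q))=Q$ and that $P(c(Q))$ is the frozen (minimal increasing) tableau of shape $\lambda(Q)$. Let $(r,c_m)$ be the cell of $Q$ containing the largest entry $m$; it is a removable corner of $\lambda(Q)$, so $c_m$ is the last letter of $c(Q)$, and deleting it yields a standard Young tableau $Q'$ with $c(Q')=c_1\cdots c_{m-1}$. By induction $\mathrm{EG}(c(Q'))=Q'$ and $P(c(Q'))$ is the frozen tableau of shape $\mu:=\lambda(Q')$. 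It remains to insert $c_m$ into this frozen tableau. Since there $P_{k,j}=k+j-1$, whenever column $c_m$ is present in row $k$ the value carried into row $k$ is exactly $c_m+(k-1)=P_{k,c_m}$, which is the equal-bump case and pushes $c_m+k=P_{k+1,c_m}$ down to row $k+1$; so the insertion walks straight down column $c_m$ by equal bumps until it reaches the first row $r$ with $(r,c_m)\notin\mu$. Because $(r,c_m)$ is an addable corner of $\mu$ one has $\mu_r=c_m-1$, so $c_m+(r-1)=r+c_m-1$ exceeds all of row $r$ and is appended precisely in cell $(r,c_m)$, keeping the tableau frozen of shape $\mu\cup\{(r,c_m)\}=\lambda(Q)$ and placing $m$ in cell $(r,c_m)$ of the recording tableau — that is, $\mathrm{EG}(c(Q))=Q$ and $P(c(Q))$ is frozen. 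The boundary cases $r=1$, $c_m=1$, and $c_m$ exceeding row $1$ have to be treated separately but are routine. (Alternatively, one can obtain $\mathrm{EG}(c(Q))=Q$ from $\cref{zeroheight}$.)

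For the second fact, fix $w$ with $\mathrm{EG}(w)=Q$ and an index $i$, and show $c_i\le w_i$, where $c_i$ is the column of $i$ in $Q$. Consider inserting $w_i$ into $P^{(i-1)}$ and what happens on row $1$. If $w_i$ is appended at the end of row $1$, the new cell is $(1,c_i)$ with $c_i$ one more than the previous length of row $1$; since $w_i>P^{(i-1)}_{1,c_i-1}\ge c_i-1$ (with the convention $P^{(i-1)}_{1,0}=0$), we get $w_i\ge c_i$. Otherwise $w_i$ bumps the entry in the leftmost column $b_1$ of row $1$ whose entry is $\ge w_i$; then $P^{(i-1)}_{1,b_1-1}<w_i$, and as $P^{(i-1)}_{1,b_1-1}\ge b_1-1$ this gives $b_1\le w_i$. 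Finally, because the bumping path of a single Edelman--Greene insertion has weakly decreasing column indices (exactly as for RSK), the column $c_i$ of the newly created cell satisfies $c_i\le b_1$, and therefore $c_i\le b_1\le w_i$. Doing this for all $i$ yields $c(Q)\preceq w$, which completes the proof.

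The main obstacle is the sub-lemma inside the induction: verifying carefully that the single letter $c_m$ threads through the frozen tableau exactly as described — that every step on the way down falls under the equal-bump rule of the Edelman--Greene insertion (this is where the frozen values $P_{k,j}=k+j-1$ are essential), and that the insertion terminates in the cell $(r,c_m)$ precisely because that cell is an addable corner. The monotonicity of the bumping path invoked in the second part is standard and may simply be cited.
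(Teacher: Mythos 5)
Your proposal is correct and follows essentially the same route as the paper: first verify that the column word inserts ``straight down'' so that $P(c(Q))$ is frozen and $\mathrm{EG}(c(Q))=Q$, then use the facts that $P^{(i-1)}_{1,j}\ge j$ and that bumping paths move weakly left to conclude $c_i\le w_i$ for every $w$ with $\mathrm{EG}(w)=Q$. The only (cosmetic) differences are that you establish the first step by induction on the last letter where the paper appeals to the lattice-word property of $c(Q)$, and that you prove $c(Q)$ is a global lower bound rather than separately arguing minimality and then ruling out a second minimal element.
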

\begin{proof}
Since $Q$ is a standard Young tableau, if $c_i$ are the columns of the entries $i$ of $Q$, $|\{i \le j : c_i = x\}| \geq |\{i \leq j : c_i = x+1\}|$ for $1 \le j \le \binom{n}{2}, 1 \leq x \leq n - 2$. Otherwise there is a row of $Q$ which is not increasing. Since $c(Q)$ has this form, each letter $x$ will end up in the $x$:th column in the Edelman--Greene insertion, $P(w)$ will be of the frozen form, and the $Q$-tableau has the entry $i$ in column $c_i$. Hence, $c(Q) \in \mathcal{P}_Q$. By the same argument, if any of the $c_i$'s is replaced by a smaller number, the shape of $P$ (and $Q$) changes. Thus $c(Q)$ is a minimal element in $\mathcal{P}_Q$. Since the columns of the insertion tableaux are always strictly increasing, the bumping paths in the Edelman--Greene insertion go down and to the left. If there is another minimal element $w$ in $\mathcal{P}_Q$, then it has to have a letter $w_i < c_i$. But then $w_i$ is inserted to a cell strictly before the $c_i$:th cell on the first row in the insertion tableau and $i$ cannot end up in the column $c_i$ as it does in $Q$. Hence $c(Q)$ is the unique minimal element in $\mathcal{P}_Q$. 
\end{proof}
We conjecture that $\mathrm{EG}^{-1}(Q)$ is maximal in $\mathcal{P}_Q$. However, in general it is not the unique maximal element. As an example, take a reduced word of the reverse permutation in $\S_6$ starting $4521343\dots$ and a non-reduced word $2431343\dots$ in the same poset $\mathcal{P}_Q$, both ending with the same subword. They are incomparable in $\mathcal{P}_Q$.

The \emph{height} $h(P)$ of a poset $P$ is the length of its longest chain. Let $[\cdot, \cdot]$ denote an interval in $\mathcal{P}_Q$ and $\ell_Q = h([c(Q), \mathrm{EG}^{-1}(Q)])$. In other words, $\ell_Q$ is the length of a maximum length chain from $c(Q)$ to $\mathrm{EG}^{-1}(Q)$. Then $\ell_Q \leq \sum_{i = 1}^{\mathrm{len}(c(Q))} (\mathrm{EG}^{-1}(Q)_i - c(Q)_i)$. However, computations suggest that we have equality for $Q \in \mathrm{SYT(sc}_n)$.
\begin{conj}\label[conj]{height}
		For $Q \in \mathrm{SYT(sc}_n)$, we conjecture that $ \mathrm{EG}^{-1}(Q)$ is a maximal element in $\mathcal{P}_Q$ and $\ell_Q = \sum_{i = 1}^{\mathrm{len}(c(Q))} (\mathrm{EG}^{-1}(Q)_i - c(Q)_i)$.
\end{conj}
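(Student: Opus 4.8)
The plan is to separate \cref{height} into its two assertions---maximality of $u:=\mathrm{EG}^{-1}(Q)$ in $\mathcal{P}_Q$, and the equality $\ell_Q=\sum_{i}(u_i-c(Q)_i)$---and to establish the second by producing an explicit chain. Write $e_i$ for the $i$-th standard basis vector, so that $w+e_i$ denotes $w$ with its $i$-th letter increased by one. Along any chain from $c(Q)$ to $u$ in $\mathcal{P}_Q$ every coordinate is non-decreasing and the total increase of the coordinates is the fixed number $\sum_i(u_i-c(Q)_i)$; hence $\ell_Q\le\sum_i(u_i-c(Q)_i)$ for free (this is the inequality already recorded before the conjecture), and it suffices to exhibit a chain $c(Q)=w^{(0)}\prec w^{(1)}\prec\cdots\prec w^{(N)}=u$ in $\mathcal{P}_Q$ with $w^{(t+1)}=w^{(t)}+e_{i_t}$ for a single index $i_t$ satisfying $w^{(t)}_{i_t}<u_{i_t}$; such a chain automatically has length $N=\sum_i(u_i-c(Q)_i)$. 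Note that $u\in\mathcal{W}_Q$ by \cref{egstaircase}, that $c(Q)\in\mathcal{W}_Q$ by \cref{topbottom}, and that $\mathrm{Des}(u)=\mathrm{Des}(c(Q))=\mathrm{Des}(Q)$ by \cref{descents}.

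The first main step is a combinatorial description of the interval $[c(Q),u]$. By \cref{descents}, $[c(Q),u]\subseteq\{w:c(Q)\preceq w\preceq u,\ \mathrm{Des}(w)=\mathrm{Des}(Q)\}$, and examples suggest equality; I would try to prove the reverse inclusion, i.e.\ that any word $w$ lying coordinatewise between $c(Q)$ and $u$ and having $\mathrm{Des}(w)=\mathrm{Des}(Q)$ satisfies $\mathrm{EG}(w)=Q$ with $P(w)$ frozen. This is the non-reduced analogue of the phenomenon already visible in \cref{topbottom}: one runs the Edelman--Greene insertion of $w$ and shows by induction on $\mathrm{len}(w)$, while tracking bumping paths as in the proofs of \cref{descents} and \cref{topbottom}, that the cell created at step $k$ is the cell $(r_k,c_k)$ with $Q_{r_k,c_k}=k$ and that the excess of its value over the frozen value $r_k+c_k-1$ is bumped downward by the later letters. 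Granting this description, the chain is built greedily: given $w\in[c(Q),u]$ with $w\neq u$, let $i^{\ast}$ be the largest index with $w_{i^{\ast}}<u_{i^{\ast}}$; if $i^{\ast}=1$, or $i^{\ast}-1\notin\mathrm{Des}(Q)$, or $w_{i^{\ast}-1}>w_{i^{\ast}}$, move to $w+e_{i^{\ast}}$; otherwise walk left to the smallest $j^{\ast}$ with $w_{j^{\ast}}=w_{j^{\ast}+1}=\cdots=w_{i^{\ast}}$ and $\{j^{\ast},\dots,i^{\ast}-1\}\subseteq\mathrm{Des}(Q)$, and move to $w+e_{j^{\ast}}$. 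Using $\mathrm{Des}(u)=\mathrm{Des}(Q)$ and the fact that $u$ is reduced (so $u_i<u_{i+1}$ strictly on an ascent and $u_i>u_{i+1}$ strictly on a descent), one checks that the chosen move stays inside the box, preserves the descent set, and decreases $\sum_i(u_i-w_i)$ by exactly one; iterating reaches $u$ after $\sum_i(u_i-c(Q)_i)$ steps, and by the description each word visited lies in $\mathcal{W}_Q$, so this is the required chain.

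For the maximality of $u$ one instead needs an upper bound on $\mathcal{W}_Q$. The basic estimate is a bump-path inequality: if the letter $w_k$ of a word $w\in\mathcal{W}_Q$ creates the cell of $k$ in $Q$, say at position $(r_k,c_k)$, then since values strictly increase along a bumping path and a cell can only be bumped downward after it is created, $w_k\le c_k+d_k$, where $d_k\ge 0$ is the amount by which $(r_k,c_k)$ is subsequently bumped down to its final frozen value $r_k+c_k-1$. Maximality then amounts to showing that if $w\succeq u$ componentwise with $w\neq u$, then some letter of $w$ must create an excess that cannot be reconciled with both \cref{descents} and frozenness of $P(w)$; equivalently, increasing the letters of a word of $\mathcal{W}_Q$ beyond their $u$-values simultaneously is impossible. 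A convenient intermediate target is to prove that covering relations in $\mathcal{P}_Q$ are always single-coordinate moves $w\lessdot w+e_i$ (this would also make $[c(Q),u]$ graded and slightly simplify the previous paragraph); maximality of $u$ would then reduce to the local statement $u+e_i\notin\mathcal{W}_Q$ for every $i$, which splits into the case where $u+e_i$ changes the descent set (immediate from \cref{descents}) and the case where it changes the shape or destroys frozenness of the insertion tableau, to be handled by a direct insertion computation.

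The principal obstacle throughout is the absence of a clean description of $\mathcal{W}_Q$: unlike on reduced words, Edelman--Greene insertion on non-reduced words is far from injective, and the excess bookkeeping above is delicate. I would therefore first test both the conjectured description of $[c(Q),u]$ and the maximality of $u$ on the families where $u$ is a $132$- or $213$-avoiding sorting network, where \cref{132sort} gives strong control and $\mathcal{W}_Q$ should be smallest, and on permutations whose insertion tableaux pass through few non-frozen cells, before attempting the general case; this should also reveal whether the description $[c(Q),u]=\{w:c(Q)\preceq w\preceq u,\ \mathrm{Des}(w)=\mathrm{Des}(Q)\}$ requires additional constraints.
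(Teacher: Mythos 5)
This statement is left open in the paper: it is stated as \cref{height} precisely because the authors could not prove it (note that \cref{consequences} is explicitly conditional on it), so there is no proof of the paper's to compare yours against. Your submission is honestly labelled a proposal, and as it stands it is a research plan rather than a proof: both of its load-bearing steps are themselves unproven claims.

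Concretely: (1) the chain construction rests entirely on the asserted description $[c(Q),u]=\{w: c(Q)\preceq w\preceq u,\ \mathrm{Des}(w)=\mathrm{Des}(Q)\}$, which you introduce with ``examples suggest equality; I would try to prove the reverse inclusion.'' Proving that every word in that box with the right weak descent set inserts to a frozen $P$ with recording tableau $Q$ requires exactly the control over Edelman--Greene insertion of non-reduced words whose absence you yourself identify as the principal obstacle; \cref{descents} and \cref{topbottom} give the easy inclusion only. (2) The maximality argument is reduced to showing $u+e_i\notin\mathcal{W}_Q$ for each $i$, via the intermediate claim that covering relations in $\mathcal{P}_Q$ are single-coordinate increments. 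That reduction is not justified: $\mathcal{P}_Q$ is an induced subposet of $\mathbb{N}^m$, so a word $w\succ u$ in $\mathcal{W}_Q$ could exceed $u$ in several coordinates or by more than one in a single coordinate without any of the intermediate words lying in $\mathcal{W}_Q$; ruling out $u+e_i$ for each $i$ would then prove nothing about maximality. The paper's own example of two incomparable maximal elements ($4521343\dots$ versus $2431343\dots$) already shows that $\mathcal{W}_Q$ is far from being a box or an interval, which should make you wary of any purely local argument. Until both of these claims are actually established, the conjecture remains open.
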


Note that $\sum_{i = 1}^{\mathrm{len}(c(Q))} (\mathrm{EG}^{-1}(Q)_i - c(Q)_i)$ is the number of right slides when performing ${EG}^{-1}$ on $Q$. Hence $\ell_Q\le \binom{n}{3}$ for the shape $\mathrm{sc}_n$. Let $\eta_{n, i}$ denote the number of $Q \in \mathrm{SYT(sc}_n)$ such that $\ell_Q = i, 0 \leq i \leq \binom{n}{3}$. \cref{heights} lists some of these values.

\begin{table}[htbp!]
	\centering
\begin{tabular}[t]{c|cccccccccccc}
	$i$ & 0 & 1 & 2 & 3 & 4 & 5 & 6 & 7 & 8 & 9 & 10 \\ \hline
	$\eta_{3, i}$ & 1 & 1 \\
	$\eta_{4, i}$ & 2 & 2 & 8 & 2 & 2 \\
	$\eta_{5, i}$ & 12 & 14 & 38 & 108 & 142 & 140 & 142 & 108 & 38 & 14 & 12	
\end{tabular}
	\caption{The values of $\eta_{n, i}$ for $n = 3, 4, 5$.}
	\label{heights}
\end{table}

The tableaux $Q$ contributing to $\eta_{n, 0}$ are simple to characterize. Then $\mathcal{P}_Q$ only contains the column word $c(Q)$.
\begin{prop}\label[prop]{zeroheight}
	If $Q \in \mathrm{SYT}(\mathrm{sc}_n)$, then $\ell_Q = 0$ if and only if $Q_{i, j} > Q_{i-1, j+1}$ for all $(i, j), (i-1, j + 1) \in Q$.
\end{prop}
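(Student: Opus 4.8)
The plan is to reduce the statement to a question about the reducedness of the column word $c(Q)$, and then to follow the Edelman--Greene insertion of $c(Q)$ one letter at a time, reading off at each stage the intermediate permutation from the (frozen) insertion tableau.

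First I would observe that $\ell_Q=0$ is equivalent to $c(Q)$ being reduced. By \cref{topbottom}, $c(Q)$ is the minimum of $\mathcal P_Q$, so $c(Q)\preceq\mathrm{EG}^{-1}(Q)$, and hence $\ell_Q=h([c(Q),\mathrm{EG}^{-1}(Q)])=0$ exactly when $c(Q)=\mathrm{EG}^{-1}(Q)$. Now $\mathrm{EG}^{-1}(Q)\in\mathcal R(n)$ satisfies $\mathrm{EG}(\mathrm{EG}^{-1}(Q))=Q$ and, by \cref{egstaircase}, $P(\mathrm{EG}^{-1}(Q))$ is frozen; thus $\mathrm{EG}^{-1}(Q)\in\mathcal W_Q$, and since $\mathcal W_Q$ contains a unique reduced word, that word is $\mathrm{EG}^{-1}(Q)$. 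As $c(Q)\in\mathcal W_Q$, we get $c(Q)=\mathrm{EG}^{-1}(Q)$ iff $c(Q)$ is reduced. So it remains to show: $c(Q)$ is reduced if and only if $Q_{i,j}>Q_{i-1,j+1}$ for all $(i,j),(i-1,j+1)\in Q$.

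Write $c(Q)=c_1\cdots c_m$, $m=\binom n2$, and for $0\le k\le m$ set $h^{(k)}=(h^{(k)}_1,\dots,h^{(k)}_{n-1})$ with $h^{(k)}_x=\#\{i\le k:c_i=x\}$. Since $Q$ is standard, each $h^{(k)}$ is a partition (the lattice-word inequality from the proof of \cref{topbottom}), and, as in that proof, $P(c_1\cdots c_k)$ is a frozen tableau whose $x$-th column has $h^{(k)}_x$ cells, while $Q(c_1\cdots c_k)$ is $Q$ restricted to entries $\le k$. The heart of the argument is the following local step: assuming $c_1\cdots c_{k-1}$ is reduced and writing $\sigma_{k-1}=s_{c_1}\cdots s_{c_{k-1}}$, the letter $c_k$ lengthens the word if and only if $h^{(k-1)}_{c_k}=h^{(k-1)}_{c_k+1}$, and in that case $c_1\cdots c_k$ is again reduced. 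Indeed $P(c_1\cdots c_{k-1})$ is frozen, so by \cref{frozenp} $\sigma_{k-1}$ is $132$-avoiding; then by \cref{fulton132} and \cref{frozendiagram} the diagram $D(\sigma_{k-1})$ coincides with the Young diagram whose $x$-th column has length $h^{(k-1)}_x$. Appending $c_k=j$ multiplies $\sigma_{k-1}$ on the right by $s_j$, which lengthens the word iff $\sigma_{k-1}(j)<\sigma_{k-1}(j+1)$; and for any permutation $\tau$ the $x$-th column of $D(\tau)$ has length $L_x(\tau):=\#\{l>x:\tau(l)<\tau(x)\}$, and a direct comparison of the sets defining $L_j(\tau)$ and $L_{j+1}(\tau)$ gives $\tau(j)<\tau(j+1)\iff L_j(\tau)\le L_{j+1}(\tau)$. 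Taking $\tau=\sigma_{k-1}$ and using the always-valid inequality $h^{(k-1)}_{j}\ge h^{(k-1)}_{j+1}$ yields the stated criterion, and a straightforward induction on $k$ then gives that $c(Q)$ is reduced iff $h^{(k-1)}_{c_k}=h^{(k-1)}_{c_k+1}$ for every $k$.

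It then remains to recognize this as the shifted-tableau condition. Fix $k$, put $j=c_k$ and $i=h^{(k)}_j=h^{(k-1)}_j+1$, so $Q_{i,j}=k$. If $i=1$, then $h^{(k-1)}_{j+1}\le h^{(k-1)}_j=0$, so the criterion holds automatically and $(i-1,j+1)=(0,j+1)\notin Q$ imposes no constraint. If $i\ge2$, then $(i,j)\in\mathrm{sc}_n$ forces $(i-1,j+1)\in\mathrm{sc}_n$, and since $h^{(k-1)}_{j+1}\le h^{(k-1)}_j=i-1$ we have $h^{(k-1)}_{j+1}=i-1$ iff at least $i-1$ copies of $j+1$ occur before position $k$ in $c(Q)$, i.e. iff $Q_{i-1,j+1}<k=Q_{i,j}$. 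Letting $k$ range over $1,\dots,m$ converts ``$h^{(k-1)}_{c_k}=h^{(k-1)}_{c_k+1}$ for all $k$'' into ``$Q_{i-1,j+1}<Q_{i,j}$ for all $(i,j),(i-1,j+1)\in Q$'', which with the previous paragraph proves the proposition. I expect the local step of the third paragraph to be the main obstacle: that is precisely where one must invoke \cref{frozendiagram} (through \cref{frozenp} and \cref{fulton132}) to identify the intermediate permutation from the evolving frozen shape; the elementary description of adjacent descents via adjacent column lengths of the Rothe diagram, and the bookkeeping with the $h^{(k)}$, are then routine.
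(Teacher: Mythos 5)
Your proof is correct, but it takes a genuinely different route from the paper's. The paper works entirely on the jeu-de-taquin side: it uses the promotion description of $\mathrm{EG}^{-1}$ and shows that the shifted condition $Q_{i,j}>Q_{i-1,j+1}$ is equivalent to the absence of right slides during evacuation, which in turn forces $\mathrm{EG}^{-1}(Q)=c(Q)$ (and conversely produces a right slide from any violating pair). You instead reduce $\ell_Q=0$ to the reducedness of $c(Q)$ via the uniqueness of the reduced word in $\mathcal{W}_Q$ and \cref{topbottom}, and then run the insertion forward, identifying the intermediate permutation $\sigma_{k-1}$ through its Rothe diagram: \cref{frozenp}, \cref{fulton132} and \cref{frozendiagram} give $D(\sigma_{k-1})$ as the Young diagram with column lengths $h^{(k-1)}_x$, the Lehmer-code description of column lengths converts the descent condition $\sigma_{k-1}(j)<\sigma_{k-1}(j+1)$ into $h^{(k-1)}_j=h^{(k-1)}_{j+1}$, and the final bookkeeping translates this into the shifted condition on $Q$. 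All the individual steps check out (including the boundary cases $i=1$ and $j=n-1$, and the verification that the interval $[c(Q),\mathrm{EG}^{-1}(Q)]$ has height $0$ iff its endpoints coincide), and there is no circularity since none of the results you invoke depend on this proposition. What each approach buys: yours avoids jeu de taquin entirely and showcases the paper's main theorem as the engine, at the cost of invoking heavier machinery; the paper's argument is more self-contained and, by phrasing everything in terms of right slides, directly supports the surrounding discussion of $\ell_Q$ as the number of right slides in $\mathrm{EG}^{-1}$ (cf.\ the remarks around \cref{height}).
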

\begin{proof} For the if-direction, assume $Q_{i, j} > Q_{i-1, j+1}$ for all $(i, j), (i-1, j + 1) \in Q$. This means that all the anti-diagonals (sets of cells with sums of the coordinates fixed) have entries increasing downwards. Suppose that there is at least one right slide, and consider the first such occurrence. Then either $x > y$ have moved to the same anti-diagonal, into some cells $(i, j)$ and $(i-1, j+1)$, respectively, or the right slide occurs at the top of column $j+1$. Since this is the first occurrence of a right slide, no evacuation paths starting from columns $c > j+1$ can have crossed to column $j+1$. Hence both cases would imply that the evacuation paths have started from column $j+1$ more often than from column $j$, a contradiction since the anti-diagonals of $Q$ are increasing to the left. Hence, all evacuation paths are vertical and the labels only slide down, so EG$^{-1}(Q) = c(Q)$ and $\ell_Q = 0$.

For the other direction, suppose there are some $x = Q_{i, j} < Q_{i-1, j+1} = y$. If there are no right slides, then $x$, $y$, and the labels above them have to stay in the same columns. If at some point $x$ and $y$ are on the same row, then there must have been a right slide involving some element in the column of $x$. Hence $x$ and $y$ have to end up on the bottom anti-diagonal, but then an evacuation path has to start from $y$ before $x$ and some entry of the column of $x$ slides right, a contradiction. \end{proof}

Staircase standard Young tableaux satisfying the condition in \cref{zeroheight} have been enumerated in \cite{T52} (this is the shifted hook-length formula for the shifted staircase). Note that \cref{132enum} is a reformulation of the corollary below by \cref{132sort}. 
\begin{cor}\label[cor]{heightenum}
	We have \[\eta_{n, 0} = {\binom{n}{2}}!\frac{1!2!\dots(n-2)!}{1!3!\dots(2n-3)!}.\]
\end{cor}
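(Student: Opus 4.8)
The plan is to recognise the tableaux enumerated by $\eta_{n,0}$ as shifted standard Young tableaux of the shifted staircase and then quote the enumeration of these. By \cref{zeroheight}, $\eta_{n,0}$ is the number of $Q \in \mathrm{SYT}(\mathrm{sc}_n)$ with $Q_{i,j} > Q_{i-1,j+1}$ whenever both $(i,j)$ and $(i-1,j+1)$ lie in $\mathrm{sc}_n$. I would then apply the shift already used in \cref{132sort}: push row $i$ of $Q$ to the right by $i-1$ boxes. Since $\mathrm{sc}_n = (n-1,n-2,\dots,1)$, the box $(i,j)$ is sent to $(i,\,i+j-1)$, and the image of the whole diagram is exactly the shifted staircase $\{(i,j) : 1 \le i \le j \le n-1\}$, which has $\binom{n}{2}$ boxes. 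Shifting preserves the row-strictness inherited from $Q$, and the box directly above $(i,\,i+j-1)$ in the shifted diagram is the image of the $Q$-cell $(i-1,j+1)$; hence column-strictness of the shifted filling is precisely the condition of \cref{zeroheight}, and conversely un-shifting any shifted standard tableau of this shape yields such a $Q$. Thus $\eta_{n,0}$ equals the number of shifted standard Young tableaux of the shifted staircase shape with $\binom{n}{2}$ boxes.

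The second step is to evaluate this number. I would invoke the shifted hook-length formula — the case of the shifted staircase is exactly the content of Thrall \cite{T52} — which says that the number of shifted standard Young tableaux of a strict shape $\mu$ is $|\mu|!$ divided by the product of the shifted hook lengths $h^{*}(i,j)$. For the shifted staircase one checks that $\prod_{(i,j)} h^{*}(i,j) = \frac{1!\,3!\cdots(2n-3)!}{1!\,2!\cdots(n-2)!}$, so dividing $\binom{n}{2}!$ by this product gives the claimed expression. This can be sanity-checked against the entries $\eta_{3,0}=1$, $\eta_{4,0}=2$, $\eta_{5,0}=12$ of \cref{heights}. Alternatively, one may simply cite \cite{T52}, which already records the count for the shifted staircase in this form.

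The translation in the first step is routine, so the only real content lies in the hook-length evaluation, which is a standard if slightly fiddly simplification; in practice I expect the proof to defer to \cite{T52} for it, exactly as anticipated by the remark preceding the statement. As a byproduct this also reproves \cite[Corollary 3.4]{FN14} (our \cref{132enum}) through \cref{132sort}, closing the circle of equivalences mentioned there.
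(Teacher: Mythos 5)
Your proposal is correct and follows essentially the same route as the paper: identify the tableaux counted by $\eta_{n,0}$ via \cref{zeroheight} as (after shifting) the shifted standard Young tableaux of the shifted staircase, and then quote Thrall's enumeration \cite{T52} (the shifted hook-length formula for that shape). The paper gives no more detail than this, so your additional remarks on the shift and the hook-product computation are fine but not required.
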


We end with some consequences of \cref{height}.
\begin{prop}\label[prop]{consequences} Assume \cref{height} holds and $Q \in \mathrm{SYT(sc}_n)$. Then\\	
	\indent a) $\ell_{Q^t} = \binom{n}{3} - \ell_{Q}$, so the sequence $\eta_{n, i}, 0 \leq i \leq \binom{n}{3}$, is symmetric,\\
    \indent b) the Sch\"utzenberger involution $S$ satisfies $\ell_Q = \ell_{Q^S}$,\\
    \indent c) the number $\eta_{n, i}$ is even for all $n \geq 4$, $0 \leq i \leq \binom{n}{3}$,\\  
	\indent d) and $\ell_Q = \binom{n}{3}$ if and only if $Q_{i, j} < Q_{i-1, j+1}$ for all $(i, j), (i-1, j + 1) \in Q$.
\end{prop}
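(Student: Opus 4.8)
The plan is to reduce everything to a single closed formula for $\ell_Q$ coming from \cref{height}. Write $C_n=\sum_i c(Q)_i$; since $c(Q)_i$ is the column index of the cell of $Q$ labelled $i$, the number $C_n$ is simply the sum of all column coordinates over the cells of $\mathrm{sc}_n$, so it depends only on $n$, namely $C_n=\sum_{k=1}^{n-1}k(n-k)=\binom{n+1}{3}$. The identical computation for rows, together with the fact that $\mathrm{sc}_n$ is self-transpose, gives $\sum_i c(Q^t)_i=C_n$ as well. Under \cref{height} we thus have
\[
\ell_Q \;=\; \sum_{i=1}^{\binom n2}\mathrm{EG}^{-1}(Q)_i \;-\; C_n .
\]
I will also use the elementary identity $n\binom n2-2C_n=\binom n3$ (equivalently $(n-2)\binom n2=3\binom n3$), which is where $\binom n3$ enters.

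Parts a), b), d) then follow quickly. For \textbf{a)}, \cref{complement} gives $\mathrm{EG}^{-1}(Q^t)=\overline{\mathrm{EG}^{-1}(Q)}$, the word with each letter $w_i$ replaced by $n-w_i$, so $\sum_i\mathrm{EG}^{-1}(Q^t)_i=n\binom n2-\sum_i\mathrm{EG}^{-1}(Q)_i$; adding the two instances of the displayed formula gives $\ell_Q+\ell_{Q^t}=n\binom n2-2C_n=\binom n3$. Since $Q\mapsto Q^t$ is an involution of $\mathrm{SYT}(\mathrm{sc}_n)$, the statistic $\ell$ is distributed symmetrically about $\tfrac12\binom n3$, i.e. $\eta_{n,i}=\eta_{n,\binom n3-i}$. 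For \textbf{b)}, \cref{reverse} gives $\mathrm{EG}^{-1}(Q^S)=\bigl(\mathrm{EG}^{-1}(Q)\bigr)^{\mathrm{rev}}$, which has the same multiset of letters as $\mathrm{EG}^{-1}(Q)$; as $Q^S$ again has shape $\mathrm{sc}_n$, the displayed formula yields $\ell_{Q^S}=\ell_Q$. For \textbf{d)}, part a) reduces the claim to $\ell_{Q^t}=0$ iff $Q_{i,j}<Q_{i-1,j+1}$ for all $(i,j),(i-1,j+1)\in Q$; by \cref{zeroheight}, $\ell_{Q^t}=0$ iff the anti-diagonals of $Q^t$ increase downward, and interchanging rows and columns turns this into exactly the stated inequality for $Q$.

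For \textbf{c)}, by b) the map $Q\mapsto Q^S$ is an $\ell$-preserving involution of $\mathrm{SYT}(\mathrm{sc}_n)$ (it is an involution by Schützenberger), so it suffices to show it is fixed-point-free for $n\ge 4$. By \cref{reverse} together with \cref{egstaircase}, $Q^S=Q$ holds iff $w:=\mathrm{EG}^{-1}(Q)$ is a palindrome, i.e. $w_i=w_{\binom n2+1-i}$ for all $i$. If $\binom n2$ is even, the two central letters of $w$ coincide, so $w$ contains two equal adjacent letters and is not reduced, a contradiction. If $\binom n2$ is odd, write $w=u_1\cdots u_k\,a\,u_k\cdots u_1$ and set $\pi=s_{u_1}\cdots s_{u_k}$; then the reverse permutation equals $\pi\,s_a\,\pi^{-1}$, a conjugate of a simple reflection and hence a transposition — impossible, since for $n\ge 4$ the reverse permutation moves more than two points. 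Thus $Q\mapsto Q^S$ is fixed-point-free for $n\ge 4$, and $\eta_{n,i}$, being a sum of sizes of two-element orbits, is even.

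The only genuinely non-formal step is the fixed-point-freeness in c); everything else is bookkeeping with the closed formula. The crucial observation there is that an odd-length palindromic reduced word represents a conjugate of a simple reflection, hence cannot represent the reverse permutation once $n\ge 4$, while the even-length case is immediate. (The excluded cases $n=2,3$ are precisely those in which the reverse permutation is itself a transposition and a palindromic reduced word — $1$, respectively $121$ — does exist, matching $\eta_{3,0}=\eta_{3,1}=1$ being odd.)
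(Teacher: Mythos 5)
Your proposal is correct. Parts a), b) and d) follow the same route as the paper: the closed formula $\ell_Q=\sum_i w_i-\binom{n+1}{3}$ from \cref{height}, combined with \cref{complement} for a), \cref{reverse} for b), and transposing \cref{zeroheight} for d). (Your arithmetic in a) is the right one; note that under \cref{complement} the complemented letter is $n-w_i$, so the relevant identity is $n\binom{n}{2}-2\binom{n+1}{3}=\binom{n}{3}$.) The genuine difference is in c), where both arguments reduce to showing that the Sch\"utzenberger involution is fixed-point-free on $\mathrm{SYT}(\mathrm{sc}_n)$ for $n\ge 4$, equivalently that no $w\in\mathcal{R}(n)$ is a palindrome. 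The paper proves this by an induction on length showing that every palindromic reduced word is Coxeter-equivalent to a ``mountain'' word $i\,(i+1)\cdots j\cdots(i+1)\,i$, which lies in $\mathcal{R}(n)$ only for $n\le 3$. You instead observe that an even-length palindrome has two equal central letters and is therefore not reduced, while an odd-length palindrome $u\,a\,u^{\mathrm{rev}}$ represents $\pi s_a\pi^{-1}$ with $\pi=s_{u_1}\cdots s_{u_k}$, a conjugate of a simple reflection and hence a transposition, whereas the reverse permutation is a product of $\lfloor n/2\rfloor\ge 2$ disjoint transpositions for $n\ge 4$. This is shorter, avoids the case analysis of the induction, and isolates the real reason the statement fails only for $n\le 3$ (precisely when the reverse permutation is itself a transposition), as your closing remark makes explicit; the paper's induction yields the slightly stronger structural fact that palindromic reduced words are exactly the Coxeter classes of mountain words, which your argument does not recover but which is not needed here.
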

\begin{proof} a) By \cref{complement}, $\ell_{Q^t} = \sum_{i = 1}^{\mathrm{len}(w)} (n - 1 - w_i - c(Q)_i) = \binom{n}{2}(n-1) - \binom{n + 1}{3} - \sum_{i = 1}^{\mathrm{len}(w)} w_i = \binom{n}{3} - (\sum_{i = 1}^{\mathrm{len}(w)} w_i - \binom{n}{3}) = \binom{n}{3} - \ell_{Q}$.\\

b) Let $w \in \mathcal{R}(n)$ and $Q = Q(w)$. By \cref{reverse}, $Q(w^{rev}) = Q^S$, and by \cref{height}, we have $\ell_Q = \sum_{i = 1}^{\mathrm{len}(w)} (w_i - c(Q)_i) = \sum_{i = 1}^{\mathrm{len}(w)} w_i - \binom{n+1}{3} = \sum_{i = 1}^{\mathrm{len}(w^{rev})} w^{rev}_i - \binom{n+1}{3} = \ell_{Q^S}.$\\

c) By b), the involution $S$ satisfies $\ell_Q = \ell_{Q^S}$. Thus it suffices to prove that it is fixed-point-free for $\mathrm{SYT}(\mathrm{sc}_n)$, $n \geq 4$. This can be seen from \cref{reverse}: $Q(w^{\mathrm{rev}}) = Q(w)^S$ for $w \in \mathcal{R}(n)$, so if $S$ had a fixpoint, there would exist $w \in \mathcal{R}(n)$ such that $w = w^{\mathrm{rev}}$. We show by induction on the length of $w$ that every $w = w^{\mathrm{rev}}$ is a reduced word of the same permutation as (in other words, is Coxeter equivalent to) $i\ (i + 1)\ \dots \ (j - 1)\ j\ (j - 1)\ \dots \ (i + 1)\ i$ for some $1 \leq i < j \leq n$, in which case $w \in \mathcal{R}(n)$ only for $n \leq 3$, when $i = 1, j = n-1.$ Clearly $w$ has to be of odd length. The base case is then that $w =  (j - 1)\ j\ (j - 1)$ or $(j + 1)\ j\ (j + 1) \approx j\ (j+1)\ j$ where $\approx$ denotes Coxeter equivalence. Consider adding the letter $x$: $x\ i\ (i + 1)\ \dots \ (j - 1)\ j\ (j - 1)\ \dots \ (i + 1)\ i\ x$. We have $i < x < j$, $x = j$, $x = j + 1$, or $x = i - 1$. Using commutations in the first case gives $i\ (i + 1)\ \dots\ x\ (x - 1)\ x\ \dots\ (j - 1)\ j\ (j - 1)\ \dots\ x\ (x - 1)\ x\ \dots\ (i + 1)\ i$, which is non-reduced. If $x = j$, we get $j \ (j - 1)\ j\ (j - 1)\ j$ in the middle, which is also non-reduced. Hence either $x = i - 1$ and we are done, or $x = j + 1$, in which case we get $(j+1)\ j\ (j+1) \approx j\ (j+1)\ j$ in the middle, and are also done.\\

d) This follows from \cref{zeroheight} by transposition. \end{proof}

\subsection*{Acknowledgements}
This paper benefited greatly from experimentation with \texttt{Sage}~\cite{sage} and its combinatorics features developed by the \texttt{Sage-Combinat}
community~\cite{Sage-Combinat}. The authors thank the referees of both the FPSAC extended abstract and this full version for their valuable comments.

\bibliographystyle{plain}
\bibliography{frozen_regions}

\end{document}